\titleformat*{\section}{\normalsize\bfseries}
\def\R{\mathbb{R}}
\def\e{{\varepsilon}}        
\def\p{\partial}
\newtheorem{thm}{Theorem}[section]
\newtheorem{lem}[thm]{Lemma}
\newtheorem{cor}[thm]{Corollary}
\newtheorem{prop}[thm]{Proposition}
\newtheorem{rem}[thm]{Remark}
\begin{document}

\title{\normalsize{\bf HIGHER-ORDER ASYMPTOTIC PROFILES OF THE SOLUTIONS TO THE VISCOUS FORNBERG-WHITHAM EQUATION}}
\author{Ikki Fukuda and Kenta Itasaka}
\date{}
\maketitle

\footnote[0]{2020 Mathematics Subject Classification: 35B40, 35Q53.}

\begin{abstract}
We consider the initial value problem for the viscous Fornberg-Whitham equation which is one of the  nonlinear and nonlocal dispersive-dissipative equations. In this paper, we establish the global existence of the solutions and study its asymptotic behavior. We show that the solution to this problem converges to the self-similar solution to the Burgers equation called the nonlinear diffusion wave, due to the dissipation effect by the viscosity term. Moreover, we analyze the optimal asymptotic rate to the nonlinear diffusion wave and the detailed structure of the solution by constructing higher-order asymptotic profiles. 
Also, we investigate how the nonlocal dispersion term affects the asymptotic behavior of the solutions and compare the results with the ones of the KdV-Burgers equation.
\end{abstract}

\medskip
\noindent
{\bf Keywords:} Viscous Fornberg-Whitham equation, global existence, asymptotic behavior, \\
second asymptotic profile, optimal decay estimates, higher-order asymptotic profiles.

\section{Introduction}

We consider the initial value problem for the following viscous Fornberg-Whitham equation:
\begin{align}
\label{VFW}
\begin{split}
u_{t}+\beta uu_{x}+\int_{\R}Be^{-b|x-\xi|}u_{\xi}(\xi, t)d\xi&=\mu u_{xx}, \ \ x\in \R, \ t>0,\\
u(x, 0)&=u_{0}(x) , \ \ x\in \R, 
\end{split}
\end{align}
where $\beta\neq 0$ and $B, b, \mu>0$. The subscripts $t$ and $x$ denote the partial derivatives with respect to $t$ and $x$, respectively. If we take $\mu=0$ in \eqref{VFW}, we obtain the Fornberg-Whitham equation: 
\begin{align}
\label{FW}
\begin{split}
u_{t}+\beta uu_{x}+\int_{\R}Be^{-b|x-\xi|}u_{\xi}(\xi, t)d\xi&=0, \ \ x\in \R, \ t>0,\\
u(x, 0)&=u_{0}(x) , \ \ x\in \R. 
\end{split}
\end{align}
Fornberg-Whitham equation~\eqref{FW} was derived by Whitham~\cite{W67} and by Whitham and Fornberg~\cite{FW78} in the late 1900s, as a model for so-called ``breaking waves". Wave-breaking phenomena for equations with the nonlocal dispersion term (called the Whitham equation) was first studied by Seliger~\cite{S68}. He presented a formal argument that wave-breaking is possible for the Whitham equation. Also, the wave-breaking phenomena for solutions of the Whitham equation with a regular kernel like \eqref{FW} was studied in \cite{W74}. For another perspective on the Whitham equation, see~\cite{G78}. In addition, for the related results about more general nonlocal equations, see also \cite{NS94}. Moreover, Constantin and Escher~\cite{CE98} proved more mathematically rigorous condition for the blow-up of solutions to the Whitham equation. In~\cite{MLQ16}, their result was improved by Ma, Liu and Qu. Recently, Haziot~\cite{H17} obtained a different blow-up condition for~\eqref{FW} with $\beta=1$, which includes only the parameter $B$. Furthermore in~\cite{IP}, the second author proposed a new blow-up condition for~\eqref{FW} with $\beta=1$, which includes the parameters $B$ and $b$. Also, he investigated some relations between the Fornberg-Whitham equation~\eqref{FW} and the inviscid Burgers equation.

As related works, Tanaka \cite{T13} and H\"{o}rmann and Okamoto \cite{HO19} studied \eqref{FW} numerically. 
These results suggest that \eqref{FW} has blow-up solutions and global solutions depending on initial data and parameters $B$ and $b$. 
As we mentioned in the above paragraph, blow-up conditions for \eqref{FW} has been studied by many researchers. 
On the other hand, we have not had any mathematical result of the global existence for \eqref{FW} yet. 
As is well known, the solution of the KdV equation always exists globally in time (cf.~\cite{CKSTT03, G09, KPV96, Ki09}). This is because the nonlinear effect and the dispersive effect balance each other, and then the energy is conserved. From this point of view, to show the global existence of solutions to \eqref{FW}, it would be effective to investigate some relationship between the nonlinear effect and the dispersive effect in \eqref{FW} and compare the dispersion term with other type ones. 

On the other hand, the equation~\eqref{VFW} is the Fornberg-Whitham equation with the viscosity term $\mu u_{xx}$ representing the dissipation effect. In this case, by virtue of the dissipation effect, we can expect that the waves do not break down and the solutions to \eqref{VFW} exist globally in time. From these perspectives, in this paper, we would like to study the initial value problem  \eqref{VFW} and establish the global existence of the solutions with small initial data. In addition, we derive the asymptotic profile of large time behavior of the solution and the optimal convergence rate to its asymptotics. Especially, in order to investigate the structure of the solution in detail, we construct higher-order asymptotic profiles of the solution. Then, we compare the results with the ones of the KdV-Burgers equation.

Before we state our main results, let us refer to some known results about the large time asymptotic behavior for solutions to the related problems. First, to analyze \eqref{VFW}, we transform the nonlocal dispersion term by using the expression 
\begin{equation*}
\int_{\R}Be^{-b|x-\xi|}u_{\xi}(\xi, t)d\xi=\mathcal{F}^{-1}\left[\frac{i2Bb\xi}{b^{2}+\xi^{2}}\hat{u}(\xi)\right](x)=2Bb(b^{2}-\p_{x}^{2})^{-1}u_{x}.
\end{equation*}
Then, we can rewrite \eqref{VFW} as follows: 
\begin{equation}\label{VFW'}
u_{t}+\beta uu_{x}+2Bb(b^{2}-\p_{x}^{2})^{-1}u_{x}=\mu u_{xx}. 
\end{equation}
From \eqref{VFW'}, we expect that \eqref{VFW} has a similar structure to the following Burgers equation: 
\begin{align}
\label{B}
\begin{split}
u_{t}+\alpha u_{x}+\beta uu_{x}&=\mu u_{xx}, \ \ x\in \R, \ t>0,\\
u(x, 0)&=u_{0}(x) , \ \ x\in \R, 
\end{split}
\end{align}
where $\alpha \in \R$, while $\beta$ and $\mu$ are defined in \eqref{VFW}. When $\alpha=0$, the initial value problem for the Burgers equation \eqref{B} is well studied by many researchers (e.g.~\cite{C51, H50, K07, K87, L00, MN04, N86}). Moreover, the asymptotic behavior of the solution is already obtained. In particular, by using the change of variable, the result by~\cite{L00} can be modified for the problem~\eqref{B} with $\alpha \in \R$. Actually, the solution of \eqref{B} converges to the nonlinear diffusion wave which is a modification of the self-similar solution of the Burgers equation and is defined by
\begin{equation}
\label{chi1}
\chi(x, t):=\frac{1}{\sqrt{1+t}} \chi_{*} \biggl(\frac{x-\alpha(1+t)}{\sqrt{1+t}}\biggl), \ x\in \R,\ \ t>0,\ \ \alpha \in \R,
\end{equation}
where
\begin{equation}
\label{chi2}
\chi_{*}(x):=\frac{\sqrt{\mu}}{\beta}\frac{(e^{\frac{\beta M}{2\mu}}-1)e^{-\frac{x^{2}}{4\mu}}}{\sqrt{\pi}+(e^{\frac{\beta M}{2\mu}}-1)\int_{x/\sqrt{4\mu}}^{\infty}e^{-y^{2}}dy}, \ M:=\int_{\R}u_{0}(x)dx, \ \beta \neq0. 
\end{equation}
More precisely, if $u_{0} \in L_{1}^{1}(\R) \cap H^{1}(\R)$ and $\|u_{0}\|_{L_{1}^{1}}+\|u_{0}\|_{H^{1}}$ is sufficiently small, then the solution to \eqref{B} satisfies 
\begin{equation}
\label{asymp.B}
\|u(\cdot, t)-\chi(\cdot, t)\|_{L^{\infty}} \le C(1+t)^{-1}, \ \ t\ge0.
\end{equation}
Here, for $k\ge0$, we denote the weighted Lebesgue spaces $L_{k}^{1}(\R)$ as a subset of $L^{1}(\R)$ whose elements satisfy $\|u_{0}\|_{L_{k}^{1}}:=\int_{\R}|u_{0}(x)|(1+|x|)^{k}dx <\infty$. Also, by the Hopf-Cole transformation (cf.~\cite{C51, H50}), we can see that $\chi(x, t)$ satisfies the following Burgers equation and the following conservation law:
\begin{equation}
\label{BC}
\chi_{t}+\left(\alpha \chi+\frac{\beta}{2}\chi^{2}\right)_{x}=\mu \chi_{xx}, \ \ \int_{\R}\chi(x, t)dx=M.  
\end{equation}

Next, we consider the following equation:
\begin{align}
\label{KdVB}
\begin{split}
u_{t}+\alpha u_{x}+\beta uu_{x}+\gamma u_{xxx}&=\mu u_{xx}, \ \ x\in \R, \ t>0,\\
u(x, 0)&=u_{0}(x) , \ \ x\in \R, 
\end{split}
\end{align}
where $\alpha, \gamma \in \R$, while $\beta$ and $\mu$ are defined in \eqref{VFW}. This equation is called the KdV-Burgers equation, which can be regarded as the Burgers equation with dispersive perturbation and as the KdV equation with viscosity term.
There are many results about the asymptotic behavior of the solution to the KdV-Burgers equation with $\alpha=0$ (cf.~\cite{F19-1, F19-2, HN06, KP05, K97, K99}). 
In particular, by using the change of variable, we have the following estimate from the result of~\cite{F19-1}: 
If $u_{0}\in L^{1}_{1}(\R)\cap H^{3}(\R)$ and $\|u_{0}\|_{L^{1}_{1}}+\|u_{0}\|_{H^{3}}$ is sufficiently small, then the solution to \eqref{KdVB} satisfies 
\begin{equation}
\label{asymp.KdVB}
\|u(\cdot, t)-\chi(\cdot, t)\|_{L^{\infty}}=(\tilde{C}+o(1))(1+t)^{-1}\log(1+t) \ \ as \ \ t\to \infty, 
\end{equation}
where $\chi(x, t)$ is defined by~\eqref{chi1}, while $\tilde{C}$ is a certain positive constant depending on $\beta$, $\gamma$ and $M$ under $M\gamma \neq0$.
Therefore, we see that the solution $u(x, t)$ to \eqref{KdVB} tends to the nonlinear diffusion wave $\chi(x, t)$ at the optimal rate of $t^{-1}\log t$ in the $L^{\infty}$-sense if $M\gamma \neq0$. 
We note that the asymptotic rate to the nonlinear diffusion wave given in \eqref{asymp.KdVB} is slower than \eqref{asymp.B} due to the dispersion term $\gamma u_{xxx}$. 
On the other hand, our target equation \eqref{VFW} can be regarded as the Burgers equation with different type dispersion term $\int_{\R}Be^{-b|x-\xi|}u_{\xi}(\xi, t)d\xi$. For this reason, we are interested in how this dispersion term affects the asymptotic behavior of the solution to \eqref{VFW}.

\medskip
For $s\ge1$ and $k\ge0$, we set $E_{s, k}:=\|u_{0}\|_{H^{s}}+\|u_{0}\|_{L^{1}_{k}}$. Then, we obtain the following result: 
\begin{thm}
\label{thm.main1}
Let $s\ge1$. Assume that $u_{0}\in L^{1}(\R)\cap H^{s}(\R)$ and $E_{s, 0}$ is sufficiently small. Then \eqref{VFW} has a unique global solution $u(x, t) \in C^{0}([0, \infty); H^{s})$. Moreover, if $u_{0}\in L^{1}_{1}(\R)\cap H^{s}(\R)$, for all $\e>0$, the estimate 
\begin{equation}
\label{main1}
\|\p_{x}^{l}(u(\cdot, t)-\chi(\cdot, t))\|_{L^{p}}\le CE_{s, 1}(1+t)^{-1+\frac{1}{2p}-\frac{l}{2}+\e}, \ t\ge0
\end{equation}
holds for any $p\in [2, \infty]$ and integer $0\le l\le s-1$, where $\chi(x, t)$ is defined by \eqref{chi1} with $\displaystyle \alpha=\frac{2B}{b}$.
\end{thm}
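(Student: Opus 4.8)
The plan is to combine a global-in-time energy/decay estimate for $u$ itself with a Duhamel and bootstrap analysis of the difference $w:=u-\chi$. First I would set up the linear machinery. Let $\{S(t)\}_{t\ge0}$ be the semigroup generated by the linear part of \eqref{VFW'}, i.e.\ the Fourier multiplier with symbol $e^{-\mu\xi^{2}t}e^{-i\frac{2Bb\xi}{b^{2}+\xi^{2}}t}$. The crucial point is that the modulus of this symbol equals $e^{-\mu\xi^{2}t}$, so Plancherel's theorem, the trivial $L^{1}\!\to\!L^{\infty}$ bound and Riesz--Thorin interpolation yield the heat-type estimate $\|\p_{x}^{l}S(t)\varphi\|_{L^{p}}\le Ct^{-\frac12(1-\frac1p)-\frac l2}\|\varphi\|_{L^{1}}$ for $p\in[2,\infty]$ --- and this is precisely what forces the restriction $p\ge2$ in \eqref{main1} --- together with the standard extra factor $t^{-1/2}$ when $\int_{\R}\varphi\,dx=0$, at the price of replacing $\|\varphi\|_{L^{1}}$ by $\|\varphi\|_{L^{1}_{1}}$. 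With these in hand, local existence in $C^{0}([0,T);H^{s})$ follows from a contraction argument on $u(t)=S(t)u_{0}-\frac{\beta}{2}\int_{0}^{t}S(t-\tau)\p_{x}(u^{2})(\tau)\,d\tau$; the global existence and the decay $\|\p_{x}^{l}u(t)\|_{L^{p}}\le CE_{s,0}(1+t)^{-\frac12(1-\frac1p)-\frac l2}$ ($0\le l\le s-1$, $p\in[2,\infty]$) then follow from the classical coupling of $H^{s}$ energy estimates --- the nonlocal term has purely imaginary, odd symbol and hence is skew-adjoint in $L^{2}$, so it drops out of every energy identity --- with the linear decay above, the smallness of $E_{s,0}$ being used to absorb the nonlinear terms.

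For the asymptotics I would use the algebraic identity, valid exactly for $\alpha=2B/b$,
\[
\bigl(2Bb(b^{2}-\p_{x}^{2})^{-1}-\alpha\bigr)\p_{x}=\tfrac{2B}{b}(b^{2}-\p_{x}^{2})^{-1}\p_{x}^{3},
\]
which splits the nonlocal dispersion into the transport operator $\alpha\p_{x}$ (accounting for the drift $\alpha(1+t)$ in \eqref{chi1}) plus a ``cubic'', smoothing remainder. Since $\chi$ solves \eqref{BC} with this $\alpha$ and $\int_{\R}\chi\,dx=M=\int_{\R}u_{0}\,dx$, writing the Duhamel formula for $u$ and for $\chi$ against $S(t)$ and subtracting gives
\[
w(t)=S(t)\bigl(u_{0}-\chi(\cdot,0)\bigr)-\frac{\beta}{2}\int_{0}^{t}S(t-\tau)\p_{x}(u^{2}-\chi^{2})(\tau)\,d\tau-\frac{2B}{b}\int_{0}^{t}S(t-\tau)(b^{2}-\p_{x}^{2})^{-1}\p_{x}^{3}\chi(\tau)\,d\tau=:I+II+III .
\]
Since $u_{0}-\chi(\cdot,0)\in L^{1}_{1}$ has vanishing integral, $I$ decays at exactly the rate in \eqref{main1} (with no $\e$). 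For $III$, the profile bounds $\|\p_{x}^{3}\chi(\tau)\|_{L^{1}}\lesssim(1+\tau)^{-3/2}$ and, after passing to the moving frame $y=x-\alpha t$ in which $\chi$ is centred near the origin, $\|\p_{x}^{3}\chi(\tau)\|_{L^{1}_{1}}\lesssim(1+\tau)^{-1}$, together with the boundedness of $(b^{2}-\p_{x}^{2})^{-1}$ on $L^{1}$ and $L^{1}_{1}$ and the vanishing integral of a perfect derivative, reduce $III$ to time-integrals of the shape $\int_{0}^{t}(t-\tau)^{-1+\frac1{2p}-\frac l2}(1+\tau)^{-1}\,d\tau$, which carry a factor $\log(2+t)$; this is the Fornberg--Whitham counterpart of the $t^{-1}\log t$ correction for the KdV--Burgers equation in \eqref{asymp.KdVB}. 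That logarithm is exactly what is absorbed into $t^{\e}$ in \eqref{main1}.

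For the nonlinear term $II$ I would run a continuity/bootstrap argument: assume $\|\p_{x}^{l}w(\tau)\|_{L^{p}}\le\delta(1+\tau)^{-1+\frac1{2p}-\frac l2+\e}$ for all $0\le l\le s-1$ and $p\in[2,\infty]$, decompose $u^{2}-\chi^{2}=2\chi w+w^{2}$, and estimate using $\|\p_{x}^{k}\chi(\tau)\|_{L^{2}}\lesssim(1+\tau)^{-1/4-k/2}$, the decay of $u$ already proved, and the bootstrap hypothesis (distributing derivatives between $S(t-\tau)$ and the product to keep the time-integrals convergent). The concentration of $\chi$ --- reflected in the $(1+\tau)^{-1/4}$ gained from one $L^{2}$-factor --- together with the vanishing integral of $\p_{x}(2\chi w+w^{2})$ makes the resulting integrals convergent up to the same $\log/\e$ loss, while the smallness of $E_{s,0}$ (hence of $M$) makes this term linear in $\delta$ with a small coefficient plus a harmless $O(\delta^{2})$ remainder; choosing $E_{s,1}$ small then closes the bootstrap and yields \eqref{main1}. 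I expect the main obstacle to be the treatment of the nonlocal dispersion: one must read off the transport velocity $2B/b$ correctly, recognize that the leftover operator is a third-order (KdV-type) dispersion near $\xi=0$, and control the resulting logarithmic loss --- together with the borderline endpoint cases $p=\infty$ and $l=s-1$ in the nonlinear estimate, which are the technical reason for stating the theorem with an arbitrarily small $\e$ rather than a clean decay rate.
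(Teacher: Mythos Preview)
Your approach is correct but differs from the paper's. Instead of a single Duhamel for $w=u-\chi$ against $S(t)$ with the dispersion-on-$\chi$ source $III$ made explicit, the paper writes $u$ via its propagator $T=S$ and $\chi$ via the drifted heat semigroup $G_0$, subtracts, and regroups into
\[
\psi=(T-G_0)(t)*u_0+G_0(t)*(u_0-\chi_0)-\tfrac{\beta}{2}\int_0^t(T-G_0)(t-\tau)*(u^2)_x\,d\tau-\tfrac{\beta}{2}\int_0^t G_0(t-\tau)*(u^2-\chi^2)_x\,d\tau.
\]
The entire dispersive effect is thus packed into the kernel difference $T-G_0$, for which a single Fourier lemma (their Lemma~4.1, a mean-value estimate on the phase) gives $\|\partial_x^l(T-G_0)(t)\|_{L^p}\lesssim t^{-1+\frac1{2p}-\frac l2}$ for $p\ge2$. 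This buys the paper something concrete: every Duhamel integral is against either the genuine heat kernel $G_0$ (so all $L^q\to L^p$ bounds come for free from Young's inequality) or against $T-G_0$, and one never has to establish $L^p\to L^p$ mapping of $S(t)$ for $p\neq2$. In your scheme that point would be handled by running the $L^2$ bootstrap first and upgrading to $L^\infty$ afterwards via Sobolev, which is in fact exactly how the paper organises Propositions~4.2 and~4.3. Two small corrections: the $\varepsilon$ in the paper's argument is forced by the borderline \emph{nonlinear} bootstrap integral $\int_0^{t/2}(t-\tau)^{-3/4-l/2}(1+\tau)^{-1}\,d\tau$, not by your Term $III$ (the $T-G_0$ lemma already encodes the cubic vanishing of the dispersion symbol at $\xi=0$ and handles that contribution without a log); and the theorem assumes only $E_{s,0}$ small, not $E_{s,1}$ --- the bootstrap closes because $u^2-\chi^2=\psi(u+\chi)$ puts a factor $CE_{s,0}$, from the a~priori decay of $u+\chi$, in front of the bootstrap norm. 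Interestingly, your decomposition, with the dispersive source on $\chi$ isolated, is precisely the one the paper adopts one level up, for the second asymptotic profile in \S5.
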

\begin{rem}
{\rm A result analogous to the above theorem has also been obtained for more general dispersive-dissipative type nonlinear equations in the $L^{\infty}$-sense (for details, see Theorem~4.13 in~\cite{HKNS06}). On the other hand, our result \eqref{main1} gives us the more concrete asymptotic formula for the solutions to \eqref{VFW}, which includes the estimate for the derivative of the solutions in the $L^{p}$-sense.
}
\end{rem}

Furthermore, we can construct the second asymptotic profile of the solution to \eqref{VFW} which is the leading term of $u-\chi$, and derive the optimal asymptotic rate to the nonlinear diffusion wave under the additional regularity assumption on the initial data. To state such a result, we define the following function
\begin{equation}\label{second1}
V(x, t):= -\kappa dV_{*}\biggl(\frac{x-\alpha(1+t)}{\sqrt{1+t}}\biggl)(1+t)^{-1}\log(1+t), \ \ \alpha=\frac{2B}{b},
\end{equation}
where
\begin{align}\label{second2}
&V_{*}(x):= \frac{1}{\sqrt{4\pi \mu}}\frac{d}{dx}(\eta_{*}(x)e^{-\frac{x^{2}}{4\mu}}), \ \ \eta_{*}(x):= \exp \biggl(\frac{\beta}{2\mu}\int_{-\infty}^{x}\chi_{*}(y)dy\biggl), \\
&d:= \int_{\R}(\eta_{*}(y))^{-1}(\chi_{*}(y))^{3}dy, \ \ \kappa := \frac{\beta^{2}B}{4b^{3}\mu^{2}}=\frac{\alpha \beta^{2}}{8b^{2}\mu^{2}}.  \label{second3}
\end{align}
Then, our second main result of this paper is as follows:
\begin{thm}
\label{thm.main2}
Let $s\ge2$. Assume that $u_{0}\in L^{1}_{1}(\R)\cap H^{s}(\R)$ and $E_{s, 0}$ is sufficiently small. Then, for the solution to~\eqref{VFW}, the estimate
\begin{equation}
\label{main2}
\|\p_{x}^{l}(u(\cdot, t)-\chi(\cdot, t)-V(\cdot, t))\|_{L^{p}}\le CE_{s, 1}(1+t)^{-1+\frac{1}{2p}-\frac{l}{2}}, \ t\ge1
\end{equation}
holds for any $p\in [2, \infty]$ and integer $0\le l\le s-2$, where $\chi(x, t)$ is defined by~\eqref{chi1} with $\displaystyle \alpha=\frac{2B}{b}$, while $V(x, t)$ is defined by~\eqref{second1}.
\end{thm}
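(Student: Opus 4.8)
The plan is to refine the argument behind Theorem~\ref{thm.main1} by tracking the precise contribution of the nonlocal dispersion term that produced the $\e$-loss in \eqref{main1}. Write $w:=u-\chi$ and use Duhamel's formula for the linear heat semigroup $e^{\mu t\p_x^2}$ applied to \eqref{VFW'}, subtracting the Burgers equation \eqref{BC} satisfied by $\chi$. The key point is that the forcing term splits into (i) the dispersive contribution $-2Bb(b^2-\p_x^2)^{-1}u_x$, (ii) the genuinely nonlinear remainder $-\frac{\beta}{2}\p_x(u^2-\chi^2)=-\frac{\beta}{2}\p_x(w(u+\chi))$, and (iii) the mismatch $-\alpha\chi_x$ coming from identifying the transport speed $\alpha=2B/b$ of $\chi$ against the low-frequency expansion $2Bb(b^2-\p_x^2)^{-1}\approx \alpha - \frac{2B}{b^3}\p_x^2+\cdots$ of the nonlocal operator. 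The decomposition $2Bb(b^2-\p_x^2)^{-1}u_x=\alpha u_x-\frac{2B}{b^3}u_{xxx}+2Bb\,\p_x^2(b^2-\p_x^2)^{-1}(\text{lower order})$ is what makes \eqref{VFW} behave, to leading order in the corrector, like the KdV--Burgers equation \eqref{KdVB} with $\gamma=-2B/b^3$; this is the origin of both the $t^{-1}\log t$ rate and the constant $\kappa=\beta^2 B/(4b^3\mu^2)$ in \eqref{second3}.

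Concretely, I would first establish (using Theorem~\ref{thm.main1} with a small $\e$, together with the higher regularity $s\ge2$) the decay rates $\|\p_x^l w(t)\|_{L^p}\lesssim (1+t)^{-1+\frac{1}{2p}-\frac l2+\e}$ and the sharper $L^1$--type bounds on $w$ needed to control the quadratic term; the quadratic forcing $\p_x(w(u+\chi))$ is then integrable-in-time after convolution with the heat kernel and contributes only to the $O((1+t)^{-1+\frac1{2p}-\frac l2})$ error, not to the logarithm. Next, for the dispersive forcing I would insert the expansion of $(b^2-\p_x^2)^{-1}$ and isolate the resonant piece $\frac{2B}{b^3}u_{xxx}\approx \frac{2B}{b^3}\chi_{xxx}$; feeding $\chi_{xxx}$ (a self-similar profile of order $(1+t)^{-2}$ in $L^\infty$) through the Duhamel integral $\int_0^t e^{\mu(t-\tau)\p_x^2}\p_x(\cdots)\,d\tau$ produces exactly a term of size $(1+t)^{-1}\log(1+t)$, and a careful evaluation — essentially the computation underlying \eqref{asymp.KdVB} from~\cite{F19-1}, adapted via the Hopf--Cole variable $\eta_*$ — identifies its leading profile as $V(x,t)$ in \eqref{second1}. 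Here the constant $d=\int_\R \eta_*^{-1}\chi_*^3\,dy$ arises from projecting the nonlinear self-interaction onto the heat kernel in the Hopf--Cole-conjugated equation, and the Gaussian-derivative shape $V_*(x)=\frac{1}{\sqrt{4\pi\mu}}\frac{d}{dx}(\eta_*(x)e^{-x^2/4\mu})$ is the image of that projection back in the original variable.

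The main obstacle, as I see it, is making the $\log$-term extraction sharp enough to both capture the coefficient $\kappa d$ exactly and show that everything else is genuinely $O((1+t)^{-1+\frac1{2p}-\frac l2})$ without any residual $\e$. This requires: estimating the difference between $u_{xxx}$ and $\chi_{xxx}$ inside the dispersive forcing using only the $\e$-lossy bound \eqref{main1} and showing the resulting time integral still beats $t^{-1}\log t$ (so the $\e$ must be absorbed by a strictly faster decay, which works because $\p_x^3 w$ decays like $t^{-2-1/2p+\e}\cdot$, integrated against the heat kernel, strictly faster than $t^{-1}$ up to logarithms); handling the non-resonant remainder $\p_x^2(b^2-\p_x^2)^{-1}(\cdots)$, which is smoothing and decays fast; and justifying the replacement of the Duhamel heat kernel acting on the self-similar $\chi_{xxx}$ by its leading asymptotic profile with an error better than $t^{-1}$, which is a now-standard but delicate scaling computation (cf.~the analysis in~\cite{F19-1, HN06}). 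A secondary technical point is the transport shift: since $\chi$ is centered at $x=\alpha(1+t)$, all the convolution estimates should be carried out in the moving frame $y=(x-\alpha(1+t))/\sqrt{1+t}$, and one must check that the initial-layer discrepancy at $t=0$ versus the normalization at $t=1$ (why the estimate is stated for $t\ge1$) contributes only bounded corrections absorbed into $CE_{s,1}$.
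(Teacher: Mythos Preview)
Your proposal has a genuine gap in how you set up the Duhamel representation. You write $w=u-\chi$ and propose to use the \emph{plain} heat semigroup $e^{\mu t\p_x^2}$, treating the entire nonlinear remainder $-\frac{\beta}{2}\p_x(w(u+\chi))$ as a forcing. But $w(u+\chi)=2w\chi+w^2$, and the piece $w\chi$ is only \emph{linear} in $w$: from \eqref{main1} one has $\|w\chi\|_{L^1}\lesssim\|w\|_{L^2}\|\chi\|_{L^2}\lesssim(1+t)^{-1+\e}$, which is \emph{borderline non-integrable}. Feeding this through the Duhamel integral reproduces exactly the $\e$-loss of Theorem~\ref{thm.main1} (this is precisely how the $\e$ arose there, cf.\ the estimate of $K_4$ in Section~4), so your claim that the ``quadratic'' forcing contributes only $O((1+t)^{-1+\frac1{2p}-\frac l2})$ is not justified. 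The heat semigroup does not see that the linear piece $(\beta\chi w)_x$ should be part of the evolution rather than a source.

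The paper resolves this by linearizing around $\chi$: one introduces the auxiliary function $v$ solving $v_t+\alpha v_x+(\beta\chi v)_x-\mu v_{xx}+\frac{2B}{b^3}\chi_{xxx}=0$ with zero data, and splits $u-\chi-V=(u-\chi-v)+(v-V)$. The key tool is the explicit Hopf--Cole--weighted representation formula (Lemma~\ref{lem.formula}, the operator $U[\cdot]$ in \eqref{U}) for the linear problem $z_t+\alpha z_x+(\beta\chi z)_x-\mu z_{xx}=\p_x\lambda$; this absorbs $(\beta\chi\cdot)_x$ into the propagator. With this formula, the equation for $u-\chi-v$ has as forcing only the genuinely quadratic $\frac{\beta}{2}\psi^2$ (with $\|\psi^2\|_{L^1}\lesssim(1+t)^{-3/2+2\e}$, now integrable) and the dispersive remainders $\alpha(b^2-\p_x^2)^{-1}\p_x^3\psi$ and $\frac{\alpha}{b^2}(b^2-\p_x^2)^{-1}\p_x^5\chi$, each of which closes without $\e$-loss. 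The identification $v-V=O((1+t)^{-1+\frac1{2p}-\frac l2})$ is then a separate self-similar computation (Proposition~\ref{prop.second1}). You correctly anticipate that the Hopf--Cole variable $\eta_*$ is involved, but it must enter at the level of the Duhamel formula itself, not merely in the final profile identification. (A minor point: your expansion has a sign error --- $2Bb(b^2-\p_x^2)^{-1}u_x=\alpha u_x+\frac{2B}{b^3}u_{xxx}+\cdots$, so the effective KdV--Burgers coefficient is $\gamma=+2B/b^3$ --- and your reliance on $\p_x^3 w$ would require $s\ge4$ rather than the stated $s\ge2$; the paper avoids this via an integration by parts inside $U[\cdot]$, see \eqref{5-19}.)
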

\noindent
In view of the second asymptotic profile, we are able to obtain the optimal asymptotic rate to the nonlinear diffusion wave $\chi(x, t)$ as follows: 
\begin{cor}\label{cor.main2}
Under the same assumptions in Theorem {\rm \ref{thm.main2}}, if $M\neq0$, the estimate 
\begin{equation}\label{main.cor}
\|\p_{x}^{l}(u(\cdot, t)-\chi(\cdot, t))\|_{L^{p}}=(C_{0}+o(1))(1+t)^{-1+\frac{1}{2p}-\frac{l}{2}}\log(1+t) \ \ as \ \ t\to \infty
\end{equation}
holds for $p\in [2, \infty]$ and integer $0\le l\le s-2$, where $C_{0}:=|\kappa d|\|\p_{x}^{l}V_{*}\|_{L^{p}}$ is a positive constant. 
\end{cor}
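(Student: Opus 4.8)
The plan is to deduce Corollary~\ref{cor.main2} directly from Theorem~\ref{thm.main2} by a triangle-inequality argument, treating $V(\cdot,t)$ as the exact leading term. Writing $u-\chi = V + (u-\chi-V)$, Theorem~\ref{thm.main2} controls the last piece by $CE_{s,1}(1+t)^{-1+\frac{1}{2p}-\frac{l}{2}}$, which is $o(1)$ times $(1+t)^{-1+\frac{1}{2p}-\frac{l}{2}}\log(1+t)$ as $t\to\infty$. So the only real content is to show that $\|\p_x^l V(\cdot,t)\|_{L^p}$ is exactly $|\kappa d|\,\|\p_x^l V_*\|_{L^p}\,(1+t)^{-1+\frac{1}{2p}-\frac{l}{2}}\log(1+t)$, and that the constant $C_0 := |\kappa d|\,\|\p_x^l V_*\|_{L^p}$ is strictly positive.

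First I would compute $\p_x^l V(\cdot,t)$ from the definition \eqref{second1}. Since $V(x,t) = -\kappa d\, V_*\!\left(\frac{x-\alpha(1+t)}{\sqrt{1+t}}\right)(1+t)^{-1}\log(1+t)$, each $x$-derivative brings down a factor $(1+t)^{-1/2}$ from the chain rule, so
\begin{equation*}
\p_x^l V(x,t) = -\kappa d\,(1+t)^{-1-\frac{l}{2}}\log(1+t)\;(\p_x^l V_*)\!\left(\frac{x-\alpha(1+t)}{\sqrt{1+t}}\right).
\end{equation*}
Taking the $L^p$-norm in $x$ and substituting $y = (x-\alpha(1+t))/\sqrt{1+t}$, which contributes a Jacobian factor $(1+t)^{1/(2p)}$ (for $p=\infty$ no Jacobian appears, consistent with the exponent $\frac{1}{2p}=0$), gives
\begin{equation*}
\|\p_x^l V(\cdot,t)\|_{L^p} = |\kappa d|\,\|\p_x^l V_*\|_{L^p}\,(1+t)^{-1+\frac{1}{2p}-\frac{l}{2}}\log(1+t),
\end{equation*}
which is exactly $C_0(1+t)^{-1+\frac{1}{2p}-\frac{l}{2}}\log(1+t)$. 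Then the triangle inequality gives the two-sided bound
\begin{equation*}
\bigl|\,\|\p_x^l(u-\chi)\|_{L^p} - C_0(1+t)^{-1+\frac{1}{2p}-\frac{l}{2}}\log(1+t)\,\bigr| \le \|\p_x^l(u-\chi-V)\|_{L^p} \le CE_{s,1}(1+t)^{-1+\frac{1}{2p}-\frac{l}{2}},
\end{equation*}
and dividing by $(1+t)^{-1+\frac{1}{2p}-\frac{l}{2}}\log(1+t)$ shows the ratio tends to $C_0$, i.e. the asymptotic formula \eqref{main.cor} holds.

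The remaining point, which I regard as the main (though still mild) obstacle, is verifying $C_0 > 0$; equivalently $\kappa d \neq 0$ and $\|\p_x^l V_*\|_{L^p} \neq 0$. For $\kappa$ this is immediate from its formula in \eqref{second3} since $\alpha = 2B/b > 0$, $\beta\neq 0$, $b,\mu>0$. For $d = \int_\R (\eta_*(y))^{-1}(\chi_*(y))^3\,dy$ I would argue from \eqref{chi2} that when $M \neq 0$ the profile $\chi_*$ has a fixed sign coinciding with the sign of $\beta M$ — indeed $\chi_*$ is a constant multiple (via $\sqrt{\mu}/\beta$ and $e^{\beta M/(2\mu)}-1$, which share the sign of $\beta M$ via the sign of $\beta$) of a positive function — so $(\chi_*)^3$ has one sign and, since $\eta_* > 0$, the integrand does not change sign and is not identically zero; hence $d \neq 0$. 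Finally $\|\p_x^l V_*\|_{L^p} \neq 0$ because $V_*$ is a nontrivial Schwartz-class function (a derivative of $\eta_* e^{-x^2/4\mu}$, which is not constant), so none of its derivatives of order $l \le s-2$ vanish identically. Collecting these, $C_0 = |\kappa d|\,\|\p_x^l V_*\|_{L^p}$ is a finite positive constant, completing the proof.
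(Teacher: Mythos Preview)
Your proof is correct and is exactly the argument the paper leaves implicit: the paper gives no separate proof of Corollary~\ref{cor.main2}, treating it as an immediate consequence of Theorem~\ref{thm.main2} together with the scaling of $V$. One harmless slip: $\chi_*$ has the sign of $M$ (not of $\beta M$), since $\tfrac{1}{\beta}(e^{\beta M/2\mu}-1)=\tfrac{M}{2\mu}\cdot\frac{e^{\beta M/2\mu}-1}{\beta M/2\mu}$ is a positive multiple of $M$; your conclusion $d\neq 0$ is unaffected.
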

\begin{rem}
{\rm
From~\eqref{main1}, \eqref{main2} and~\eqref{main.cor}, we can see that the first and second asymptotic profiles of the solution to~\eqref{VFW} are similar to Burgers type equations such as generalized Burgers equation, generalized KdV-Burgers equation and BBM-Burgers equation, and also damped wave equation with a convection term (cf.~\cite{F19-1, HKN07, K07, KU17}). 
}
\end{rem}

We note that compared with the term $\gamma u_{xxx}$ in~\eqref{KdVB}, the term $\int_{\R}Be^{-b|x-\xi|}u_{\xi}(\xi, t)d\xi$ in \eqref{VFW} acts not only as a dispersive perturbation but also as a linear convection term. The main reason for it is that we can split the nonlocal dispersion term into dispersion and convection parts as $2Bb(b^{2}-\p_{x}^{2})^{-1}u_{x}=\alpha(b^{2}-\p_{x}^{2})^{-1}\p_{x}^{3}u+\alpha u_{x}$ (for details, we discuss it later). However, from Theorem~\ref{thm.main1} and Theorem~\ref{thm.main2}, the first and second asymptotic profiles of the solution to \eqref{VFW} are essentially the same as that of the KdV-Burgers equation \eqref{KdVB}, and the effect of $\alpha(b^{2}-\p_{x}^{2})^{-1}\p_{x}^{3}u$ is almost the same as $\gamma u_{xxx}$. To further investigate the effect of the nonlocal dispersion term, we would like to analyze the more detailed structure of the solution. Actually, we construct the third asymptotic profile of the solution and derive the optimal decay estimate for $u-\chi-V$. To state the next result, let us define the following new functions $W(x, t)$ and $\Psi(x, t)$. First, we define 
\begin{equation}\label{third1}
W(x, t):=\theta V_{*}\biggl(\frac{x-\alpha(1+t)}{\sqrt{1+t}}\biggl)(1+t)^{-1}, \ \ \alpha=\frac{2B}{b},
\end{equation} 
where 
\begin{align}
&\theta:=\int_{\R}z_{0}(x)dx+\int_{0}^{\infty}\int_{\R}\rho(x, t)dxdt, \ \ z_{0}(x):=\eta(x, 0)^{-1}\int_{-\infty}^{x}(u_{0}(y)-\chi(y, 0))dy, \label{initial}\\
&\rho(x, t):=-\eta(x, t)^{-1}\left(\frac{\beta}{2}(u-\chi)^{2}+\frac{2B}{b}(b^{2}-\p_{x}^{2})^{-1}\p_{x}^{2}(u-\chi)+\frac{2B}{b^{3}}(b^{2}-\p_{x}^{2})^{-1}\p_{x}^{4}\chi\right)(x, t),\label{rho}\\
&\eta(x, t):=\eta_{*}\left(\frac{x-\alpha(1+t)}{\sqrt{1+t}}\right)=\exp \biggl(\frac{\beta}{2\mu}\int_{-\infty}^{x}\chi(y, t)dy\biggl), \label{eta}
\end{align}
with $V_{*}(x)$ and $\eta_{*}(x)$ being defined by \eqref{second2}. Next, we define 
\begin{equation}\label{third2}
\Psi(x, t):=\Psi_{*}\left(\frac{x-\alpha(1+t)}{\sqrt{1+t}}\right)(1+t)^{-1}, \ \ \alpha=\frac{2B}{b},
\end{equation}
where
\begin{align}
&\Psi_{*}(x):=\frac{d}{dx}\left(\eta_{*}(x)\int_{0}^{1}(G(1-\tau)*F(\tau))(x)d\tau\right), \ \ 
G(x, t):=\frac{1}{\sqrt{4\pi \mu t}}e^{-\frac{x^{2}}{4\mu t}}, \label{Gauss} \\
&F(x, \tau):=F_{*}\left(\frac{x}{\sqrt{\tau}}\right)\tau^{-\frac{3}{2}}, \ F_{*}(x):=\frac{2B}{b^{3}}\eta_{*}(x)^{-1}\chi_{*}''(x)-\frac{\kappa d}{\sqrt{4\pi \mu}}e^{-\frac{x^{2}}{4\mu}}, \label{FF}
\end{align}
with $\kappa$ and $d$ being defined by \eqref{second3}. Finally, combining $W(x, t)$ and $\Psi(x, t)$, we set 
\begin{equation}\label{third} 
Q(x, t):=W(x, t)+\Psi(x, t). 
\end{equation}
Then, the third asymptotic profile of the solution to \eqref{VFW} is given by the above function $Q(x, t)$. Actually, we have the following asymptotic relation: 
\begin{thm}
\label{thm.main3}
Let $s\ge3$. Assume that $u_{0}\in L^{1}_{1}(\R)\cap H^{s}(\R)$, $z_{0}\in L^{1}_{1}(\R)$ and $E_{s, 0}$ is sufficiently small. Then, the solution to~\eqref{VFW} satisfies 
\begin{equation}
\label{main3}
\lim_{t\to \infty}(1+t)^{1-\frac{1}{2p}+\frac{l}{2}}\|\p_{x}^{l}(u(\cdot, t)-\chi(\cdot, t)-V(\cdot, t)-Q(\cdot, t))\|_{L^{p}}=0
\end{equation}
for any $p\in [2, \infty]$ and integer $0\le l\le s-3$, where $\chi(x, t)$ and $V(x, t)$ are defined by~\eqref{chi1} and~\eqref{second1}, respectively, while $Q(x, t)$ is defined by \eqref{third}. 
\end{thm}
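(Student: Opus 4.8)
The plan is to turn \eqref{main3} into a purely linear computation by passing to an antiderivative together with a Hopf--Cole-type change of unknown, and then to read off the three profiles $V$, $W$, $\Psi$ directly from Duhamel's formula. Since $u(\cdot,t)$ and $\chi(\cdot,t)$ carry the same mass $M$, the function $\psi(x,t):=\int_{-\infty}^{x}(u-\chi)(y,t)\,dy$ vanishes at $\pm\infty$, and integrating once in $x$ the equation obtained by subtracting \eqref{BC} from \eqref{VFW'}, using $2Bb(b^{2}-\p_{x}^{2})^{-1}u_{x}=\alpha u_{x}+\alpha(b^{2}-\p_{x}^{2})^{-1}\p_{x}^{3}u$ with $\alpha=2B/b$, gives
\[\psi_{t}+\alpha\psi_{x}+\beta\chi\psi_{x}+\tfrac{\beta}{2}\psi_{x}^{2}+\alpha(b^{2}-\p_{x}^{2})^{-1}\p_{x}^{2}u=\mu\psi_{xx}.\]
Setting $\psi=\eta\Phi$ with $\eta$ as in \eqref{eta} and using $\eta_{x}=\tfrac{\beta}{2\mu}\chi\eta$ together with \eqref{BC}, one verifies that the linearized Burgers operator $L\psi:=\psi_{t}+\alpha\psi_{x}+\beta\chi\psi_{x}-\mu\psi_{xx}$ satisfies $L(\eta\Phi)=\eta(\Phi_{t}+\alpha\Phi_{x}-\mu\Phi_{xx})$, so $\Phi:=\eta^{-1}\psi$ solves the linear convection--diffusion equation
\[\Phi_{t}+\alpha\Phi_{x}-\mu\Phi_{xx}=\rho-\tfrac{2B}{b^{3}}\eta^{-1}\chi_{xx},\qquad \Phi(\cdot,0)=z_{0},\]
with $\rho$, $z_{0}$ exactly those of \eqref{initial}--\eqref{rho}; here $\alpha(b^{2}-\p_{x}^{2})^{-1}\p_{x}^{2}\chi$ is split via $(b^{2}-\p_{x}^{2})^{-1}\p_{x}^{2}=\tfrac1{b^{2}}\p_{x}^{2}+\tfrac1{b^{2}}(b^{2}-\p_{x}^{2})^{-1}\p_{x}^{4}$. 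Because $u-\chi=\p_{x}(\eta\Phi)$, every term in \eqref{main3} is $\p_{x}^{l+1}(\eta\Phi)$.

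Next I would extract the profiles from Duhamel's formula. Let $G_{\alpha}(x,t):=G(x-\alpha t,t)$ with $G$ as in \eqref{Gauss}, the kernel of $\p_{t}+\alpha\p_{x}-\mu\p_{x}^{2}$. Using the self-similar form of $\chi$ one has the exact identity
\[\tfrac{2B}{b^{3}}\eta^{-1}\chi_{xx}=F\bigl(\cdot-\alpha(1+t),\,1+t\bigr)+\kappa d\,(1+t)^{-1}G_{\alpha}(\cdot,1+t),\]
with $F$, $\kappa$, $d$ as in \eqref{second3}--\eqref{FF}, where the Gaussian is subtracted precisely so that $\int_{\R}F(x,\tau)\,dx=0$; this cancellation is equivalent to $\tfrac{2B}{b^{3}}\int_{\R}\eta_{*}^{-1}\chi_{*}''\,dy=\kappa d$, which follows from $\eta_{*}'=\tfrac{\beta}{2\mu}\chi_{*}\eta_{*}$ and the profile ODE $\mu\chi_{*}'=\tfrac{\beta}{2}\chi_{*}^{2}-\tfrac12 y\chi_{*}$ by integration by parts. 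Duhamel's formula then gives $\Phi=\Phi_{1}+\Phi_{2}+\Phi_{3}$, where: (i) $\Phi_{2}=-\kappa d\int_{0}^{t}(1+\tau)^{-1}\,G_{\alpha}(\cdot,t-\tau)*G_{\alpha}(\cdot,1+\tau)\,d\tau=-\kappa d\log(1+t)\,G_{\alpha}(\cdot,1+t)$ by the semigroup property, and a direct computation yields $\p_{x}(\eta\Phi_{2})=V$ exactly; (ii) $\Phi_{1}=G_{\alpha}(\cdot,t)*z_{0}+\int_{0}^{t}G_{\alpha}(\cdot,t-\tau)*\rho(\tau)\,d\tau$, whose leading behaviour is $\theta\,G_{\alpha}(\cdot,1+t)$ with $\theta$ as in \eqref{initial}, so $\p_{x}(\eta\Phi_{1})$ converges to $W$; (iii) $\Phi_{3}=-\int_{0}^{t}G_{\alpha}(\cdot,t-\tau)*F(\cdot-\alpha(1+\tau),1+\tau)\,d\tau$, which after translating by $\alpha(1+t)$ and rescaling $\sigma=(1+\tau)/(1+t)$ equals $-(1+t)^{-1/2}\int_{1/(1+t)}^{1}\bigl(G(\cdot,1-\sigma)*F(\cdot,\sigma)\bigr)(y)\,d\sigma$ with $y=(x-\alpha(1+t))/\sqrt{1+t}$, hence $\Phi_{3}\sim-(1+t)^{-1/2}\int_{0}^{1}G(\cdot,1-\sigma)*F(\cdot,\sigma)\,d\sigma$ and $\p_{x}(\eta\Phi_{3})$ converges to $\Psi$. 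Therefore $u-\chi-V-Q=\p_{x}\bigl(\eta[\Phi_{1}-\theta G_{\alpha}(\cdot,1+t)]\bigr)+\bigl(\p_{x}(\eta\Phi_{3})-\Psi\bigr)$, and \eqref{main3} amounts to showing each summand is $o\bigl((1+t)^{-1+\frac1{2p}-\frac l2}\bigr)$ after $\p_{x}^{l}$.

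For the $\Phi_{3}$-term the zero-mass property $\int F(\cdot,\sigma)=0$ does the job: writing $G(\cdot,1-\sigma)*F(\cdot,\sigma)=\int[G(1-\sigma,\cdot-z)-G(1-\sigma,\cdot)]F(\sigma,z)\,dz$ gives $\|G(\cdot,1-\sigma)*F(\cdot,\sigma)\|_{L^{p}}\lesssim\sigma^{-1/2}\|F_{*}\|_{L^{1}_{1}}$ for small $\sigma$, so the leftover integral over $(0,(1+t)^{-1})$ is $O((1+t)^{-1/2})$ in $L^{p}(dy)$, hence of size $(1+t)^{-3/2-l/2+\frac1{2p}}$ after $\p_{x}^{l+1}(\eta\cdot)$. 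The heart of the argument, and the main obstacle, is the $\Phi_{1}$-term: one compares $G_{\alpha}(\cdot,t-\tau)*\rho(\tau)$ with $\bigl(\int\rho(\tau)\bigr)G_{\alpha}$, the error being controlled by the first moment $\int|y-\alpha(1+\tau)|\,|\rho(y,\tau)|\,dy$, and bounding this forces one to propagate spatial weights through the quadratic and nonlocal terms of $\rho$, i.e.\ to establish weighted-in-$x$ refinements of the decay estimates behind Theorems~\ref{thm.main1}--\ref{thm.main2} for $u-\chi$ and $\p_{x}(u-\chi)$ (in particular an $L^{1}_{x}$-bound for $\p_{x}(u-\chi)$ with the sharp $\log$-corrected rate), which is where the hypotheses $u_{0},z_{0}\in L^{1}_{1}(\R)$ and $s\ge3$ enter; the same estimates give the convergence of $\int_{0}^{\infty}\!\int_{\R}\rho\,dx\,dt$ needed to define $\theta$, and show that $\|\Phi_{1}-\theta G_{\alpha}(\cdot,1+t)\|_{L^{p}}=o\bigl((1+t)^{-\frac12+\frac1{2p}}\bigr)$, which after $\p_{x}^{l+1}(\eta\cdot)$ is again $o\bigl((1+t)^{-1-\frac l2+\frac1{2p}}\bigr)$. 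The nonlocal operator adds no loss here: $(b^{2}-\p_{x}^{2})^{-1}\p_{x}$ and $(b^{2}-\p_{x}^{2})^{-1}\p_{x}^{2}$ are bounded on $L^{p}$ for every $p\in[1,\infty]$, and in $\rho$ the two derivatives on $u-\chi$ can be taken one at a time, keeping the decay in step with what Theorems~\ref{thm.main1}--\ref{thm.main2} provide. Assembling these bounds yields \eqref{main3}.
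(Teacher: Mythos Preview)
Your framework is exactly the paper's: the operator $U$ of \eqref{U} is nothing but $\p_{x}(\eta\cdot G_{0}*\cdot)$ applied to $\eta^{-1}$ times the antiderivative, so your $\Phi$ and the paper's integral representation \eqref{D} are the same object, and your decomposition $\Phi=\Phi_{1}+\Phi_{2}+\Phi_{3}$ lines up with the paper's split into $U[\psi_{0}]+D$ and $v-V$. Your treatment of $\Phi_{2}$ (the semigroup identity giving $V$ exactly) and $\Phi_{3}$ (the scaling computation plus the zero-mass bound $\int_{0}^{1/(1+t)}\|G(1-\sigma)*F(\sigma)\|_{L^{p}}\,d\sigma\lesssim(1+t)^{-1/2}$) matches Propositions~\ref{prop.second1} and~\ref{prop.third2} verbatim.

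The difference is in $\Phi_{1}$. You propose to control $G_{\alpha}(t-\tau)*\rho(\tau)-(\int\rho(\tau))G_{\alpha}$ via the first moment $\int|y-\alpha(1+\tau)||\rho(y,\tau)|\,dy$, and you say this forces weighted-in-$x$ refinements of Theorems~\ref{thm.main1}--\ref{thm.main2}. That is a legitimate strategy, but those weighted estimates are \emph{not} available from the paper's earlier sections, and establishing them (propagating a spatial weight through the nonlinear and nonlocal terms in \eqref{VFW}) is a nontrivial side project that you have not carried out; it is also not obvious that $z_{0}\in L^{1}_{1}$ suffices for it. The paper bypasses this entirely: in Lemma~\ref{lem.third-D} it only uses $\int_{0}^{\infty}\!\int_{\R}|\rho|\,dx\,dt<\infty$ together with the pointwise decay \eqref{est.rho}, and extracts the leading term $\theta_{1}\p_{x}(G_{0}\eta)$ by the standard $\varepsilon$-cutoff trick---splitting the Duhamel integral into the five pieces $Y_{1},\dots,Y_{5}$ of \eqref{X(x,t)} according to whether $\tau\gtrless\varepsilon t/2$ and $|y|\gtrless\varepsilon\sqrt{t}$, then invoking dominated convergence on the outer regions and the mean-value theorem (with the bound $|y|\le\varepsilon\sqrt{t}$ replacing the first moment) on the inner one. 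The hypothesis $z_{0}\in L^{1}_{1}$ is used only for the initial-data piece $U[\psi_{0}]$ (Lemma~\ref{lem.third-L}), not for $\rho$. So your plan is sound in outline, but for $\Phi_{1}$ the paper's route is both shorter and stays within the estimates already proved.
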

\noindent
Similar to Corollary~\ref{cor.main2}, the following optimal decay estimate of $u-\chi-V$ can be obtained: 
\begin{cor}
\label{cor.main3}
Under the same assumptions in Theorem~\ref{thm.main3}, the estimate 
\begin{equation}
\label{main.cor2}
\|\p_{x}^{l}(u(\cdot, t)-\chi(\cdot, t)-V(\cdot, t))\|_{L^{p}}=(c_{0}+o(1))(1+t)^{-1+\frac{1}{2p}-\frac{l}{2}} \ \ as \ \ t\to \infty
\end{equation}
holds for any $p\in [2, \infty]$ and integer $0\le l\le s-3$, where $c_{0}:=\|\p_{x}^{l}(\theta V_{*}+\Psi_{*})\|_{L^{p}}$ is a constant. 
\end{cor}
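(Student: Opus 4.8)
\section*{Proof proposal for Corollary~\ref{cor.main3}}

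The plan is to derive Corollary~\ref{cor.main3} directly from Theorem~\ref{thm.main3} together with an explicit computation of the $L^{p}$-norm of $\p_{x}^{l}Q$. First I would observe that, by the definitions~\eqref{third1}, \eqref{third2} and~\eqref{third}, the third asymptotic profile can be written in a single self-similar form,
\[
Q(x, t)=\bigl(\theta V_{*}+\Psi_{*}\bigr)\!\left(\frac{x-\alpha(1+t)}{\sqrt{1+t}}\right)(1+t)^{-1}, \qquad \alpha=\frac{2B}{b}.
\]
Differentiating $l$ times in $x$ produces a factor $(1+t)^{-l/2}$ by the chain rule, and the change of variables $y=(x-\alpha(1+t))/\sqrt{1+t}$ contributes a Jacobian factor which in the $L^{p}$-norm amounts to $(1+t)^{1/(2p)}$; hence one obtains the exact identity
\[
\|\p_{x}^{l}Q(\cdot, t)\|_{L^{p}}=\|\p_{x}^{l}(\theta V_{*}+\Psi_{*})\|_{L^{p}}\,(1+t)^{-1-\frac{l}{2}+\frac{1}{2p}}=c_{0}\,(1+t)^{-1+\frac{1}{2p}-\frac{l}{2}}, \qquad t\ge 0,
\]
valid for every $p\in[2,\infty]$ and integer $0\le l\le s-3$ (for $p=\infty$ the Jacobian factor is absent and one gets the exponent $-1-\frac{l}{2}$, consistent with $\frac{1}{2p}=0$). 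Here one only needs that $\p_{x}^{l}(\theta V_{*}+\Psi_{*})\in L^{p}(\R)$, so that $c_{0}<\infty$; this holds because $V_{*}$ is a constant multiple of the derivative of a Gaussian times the bounded smooth function $\eta_{*}$, while $\Psi_{*}$ is built from heat-kernel convolutions over the compact interval $\tau\in[0,1]$ applied to the rapidly decaying profile $F$, so all $x$-derivatives of $V_{*}$ and $\Psi_{*}$ lie in $L^{1}(\R)\cap L^{\infty}(\R)$.

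Next I would apply the triangle inequality in $L^{p}$,
\[
\bigl|\,\|\p_{x}^{l}(u(\cdot,t)-\chi(\cdot,t)-V(\cdot,t))\|_{L^{p}}-\|\p_{x}^{l}Q(\cdot,t)\|_{L^{p}}\,\bigr|\le \|\p_{x}^{l}(u(\cdot,t)-\chi(\cdot,t)-V(\cdot,t)-Q(\cdot,t))\|_{L^{p}},
\]
and invoke Theorem~\ref{thm.main3}, which states precisely that the right-hand side is $o\!\left((1+t)^{-1+\frac{1}{2p}-\frac{l}{2}}\right)$ as $t\to\infty$. Combining this with the exact formula for $\|\p_{x}^{l}Q(\cdot,t)\|_{L^{p}}$ obtained above yields
\[
\|\p_{x}^{l}(u(\cdot,t)-\chi(\cdot,t)-V(\cdot,t))\|_{L^{p}}=c_{0}\,(1+t)^{-1+\frac{1}{2p}-\frac{l}{2}}+o\!\left((1+t)^{-1+\frac{1}{2p}-\frac{l}{2}}\right)=(c_{0}+o(1))\,(1+t)^{-1+\frac{1}{2p}-\frac{l}{2}},
\]
which is exactly~\eqref{main.cor2}.

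There is essentially no hard step: the corollary is a formal consequence of Theorem~\ref{thm.main3}, so the only point requiring care is the bookkeeping of the self-similar scaling in the computation of $\|\p_{x}^{l}Q(\cdot,t)\|_{L^{p}}$ — checking that the powers of $(1+t)$ coming from the prefactor $(1+t)^{-1}$, from the $l$ spatial derivatives, and from the change-of-variables Jacobian combine to exactly the exponent $-1+\frac{1}{2p}-\frac{l}{2}$ appearing throughout Theorems~\ref{thm.main1}--\ref{thm.main3}. No smallness of $E_{s,0}$ beyond what is already assumed in Theorem~\ref{thm.main3} is needed; and, unlike Corollary~\ref{cor.main2}, we make no positivity claim for $c_{0}$, so no nondegeneracy argument for $\theta V_{*}+\Psi_{*}$ is required.
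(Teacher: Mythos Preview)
Your proposal is correct and follows exactly the approach the paper intends: the paper does not give a separate proof of Corollary~\ref{cor.main3} but simply remarks that it is obtained ``similar to Corollary~\ref{cor.main2}'' from Theorem~\ref{thm.main3}, i.e., by computing $\|\p_{x}^{l}Q(\cdot,t)\|_{L^{p}}$ via the self-similar scaling and applying the triangle inequality. Your scaling computation and the finiteness of $c_{0}$ (using that $\Psi_{*}\in W^{m,p}(\R)$, which the paper establishes in the proof of Proposition~\ref{prop.third2}) are exactly the required ingredients.
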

\begin{rem}
{\rm
Our method of construction for the third asymptotic profile of the solution can also be applied to the KdV-Burgers equation \eqref{KdVB}. In particular, take the same initial data $u_{0}(x)$ and consider the case $\displaystyle \alpha=\frac{2B}{b}$, same $\beta$ and $\displaystyle \gamma=\frac{2B}{b^{3}}$ in \eqref{KdVB}, that is 
\begin{align}\tag{\ref{KdVB}}
\begin{split}
\tilde{u}_{t}+\frac{2B}{b}\tilde{u}_{x}+\beta \tilde{u}\tilde{u}_{x}+\frac{2B}{b^{3}}\tilde{u}_{xxx}&=\mu \tilde{u}_{xx}, \ \ x\in \R, \ t>0,\\
\tilde{u}(x, 0)&=u_{0}(x) , \ \ x\in \R. 
\end{split}
\end{align}
In this case, under the same assumptions in Theorem~\ref{thm.main3}, we can show the following formula:  
\begin{equation}\label{KdVB-AP}
\lim_{t\to \infty}(1+t)^{1-\frac{1}{2p}+\frac{l}{2}}\|\p_{x}^{l}(\tilde{u}(\cdot, t)-\chi(\cdot, t)-V(\cdot, t)-\tilde{Q}(\cdot, t))\|_{L^{p}}=0, 
\end{equation}
where the first asymptotic profile $\chi(x, t)$ and the second asymptotic profile $V(x, t)$ are exactly the same functions as in the case of \eqref{VFW}. On the other hand, $\tilde{Q}(x, t)$ is defined as 
\[\tilde{Q}(x, t):=\tilde{W}(x, t)+\Psi(x, t)\]
with $\Psi(x, t)$ being defined by \eqref{third2} and $\tilde{W}(x, t)$ is defined by 
\begin{align}\label{tilde}
\begin{split}
&\tilde{W}(x, t):=\tilde{\theta} V_{*}\biggl(\frac{x-\alpha(1+t)}{\sqrt{1+t}}\biggl)(1+t)^{-1}, \ \ \tilde{\theta}:=\int_{\R}z_{0}(x)dx+\int_{0}^{\infty}\int_{\R}\tilde{\rho}(x, t)dxdt, \\ 
&\tilde{\rho}(x, t):=-\eta(x, t)^{-1}\left(\frac{\beta}{2}(\tilde{u}-\chi)^{2}+\frac{2B}{b^{3}}\p_{x}^{2}(\tilde{u}-\chi)\right)(x, t)
\end{split}
\end{align}
with $V_{*}(x)$, $z_{0}(x)$ and $\eta(x, t)$ being defined by \eqref{second2}, \eqref{initial} and \eqref{eta}, respectively. As can be seen from the definition of $W (x, t)$ (defined by \eqref{third1}) and $\tilde{W}(x, t)$, the effect of the dispersion terms appear on the amplitude $\theta$ (defined by \eqref{initial}) and $\tilde{\theta}$ in the third asymptotic profile, respectively. Here, we note that $\theta$ and $\tilde{\theta}$ are not always equal. This result implies that even if the parameters are selected so that the first asymptotic profile and the second asymptotic profile of \eqref{KdVB} match those of \eqref{VFW}, the third asymptotic profile $\tilde{Q}(x, t)$ does not always equal to $Q(x, t)$. 
}
\end{rem}

\begin{rem}
{\rm
We consider the effect of the nonlocal dispersion term in \eqref{VFW} for more higher-order asymptotic profiles.
Comparing the case of \eqref{VFW} and the case of \eqref{KdVB}, from \eqref{main3} and \eqref{KdVB-AP}, we can see that the nonlocal term $(b^{2}-\p_{x}^{2})^{-1}\p_{x}^{2}(u-\chi)$ and $(b^{2}-\p_{x}^{2})^{-1}\p_{x}^{4}\chi$ in $\rho(x,t)$ (defined by \eqref{rho}) appeared from the dispersion term are replaced by $\p_{x}^{2}(\tilde{u}-\chi)$ in $\tilde{\rho}(x, t)$ (defined by \eqref{tilde}). In particular, for \eqref{KdVB}, the term corresponding to higher-order derivative such as $\p_{x}^{4}\chi$ does not appear in $\tilde{\rho}(x, t)$. On the other hand for \eqref{VFW}, higher-order derivative term $(b^{2}-\p_{x}^{2})^{-1}\p_{x}^{4}\chi$ appears. 
This is because the nonlocal dispersion term has the following expression:
\[\int_{\R}Be^{-b|x-\xi|}u_{\xi}(\xi, t)d\xi=2Bb(b^{2}-\p_{x}^{2})^{-1}u_{x}=\frac{2B}{b}\p_{x}u + \frac{2B}{b^{3}}\p_{x}^{3}u + \frac{2B}{b^{3}}(b^{2}-\p_{x}^{2})^{-1}\p_{x}^{5}u.\]
For Burgers type equations, the time decay of the solution gets faster with spatial derivative.
Also, thinking about higher-order asymptotic profiles corresponds to viewing solutions in terms of faster time decay.
Hence, although the effect of the fifth-order derivative term do not appear in the first asymptotic profile and the second asymptotic profile,
the effect and the difference between \eqref{VFW} and \eqref{KdVB} appear in the third asymptotic profile.
More generally, we note that the following formal series expansion: 
\[\frac{i2Bb\xi}{b^{2}+\xi^{2}}=\frac{2B}{b}\left\{i\xi-\frac{i\xi^{3}}{b^{2}}+\frac{i\xi^{5}}{b^{4}}-\frac{i\xi^{7}}{b^{6}}+\cdots\right\}.\]
Thus, the nonlocal dispersion term can be considered  formally as follows: 
\[\int_{\R}Be^{-b|x-\xi|}u_{\xi}(\xi, t)d\xi=2Bb(b^{2}-\p_{x}^{2})^{-1}u_{x}=\frac{2B}{b}\left\{\p_{x}u+\frac{1}{b^{2}}\p_{x}^{3}u+\frac{1}{b^{4}}\p_{x}^{5}u+\frac{1}{b^{6}}\p_{x}^{7}u+\cdots\right\}.\]
Therefore, more higher-order asymptotic profiles may be affected by more higher-order derivatives.
Also, the difference between the effect of the nonlocal dispersion term and the effect of the ordinary dispersion term $u_{xxx}$ would appear from the third, fourth or more higher-order asymptotic profiles.
}
\end{rem}

This paper is organized as follows. In section 2, we prove the global existence and decay estimates for the solutions to~\eqref{VFW}. In Section 3, we introduce some decay estimates for asymptotic functions and basic properties for an auxiliary problem. In Section 4, we derive the asymptotic behavior of the solutions to~\eqref{VFW}, i.e., we prove Theorem~\ref{thm.main1}. Next, we derive the second asymptotic profile $V(x, t)$ and give the proof of Theorem~\ref{thm.main2} in Section 5. Finally, we show that the third asymptotic profile is given by $Q(x, t)$ in the last section, i.e., we prove Theorem~\ref{thm.main3} in Section 6. The main difficulty of the proofs of Theorem~\ref{thm.main1}, Theorem~\ref{thm.main2} and Theorem~\ref{thm.main3} is how to treat the nonlocal dispersion term $\int_{\R}Be^{-b|x-\xi|}u_{\xi}(\xi, t)d\xi$. To avoid that difficulty, based on the above expansion, we transform this term to $\alpha(b^{2}-\p_{x}^{2})^{-1}\p_{x}^{3}u+\alpha u_{x}$ and apply the idea of the asymptotic analysis for the KdV-Burgers equation used in~\cite{F19-1, KP05}. 

\medskip
\par\noindent
\textbf{\bf{Notations.}} In this paper, for $1\le p \le \infty$, $L^{p}(\R)$ denotes the usual Lebesgue spaces. Moreover, for $k\ge0$, we define the following weighted Lebesgue spaces: 
\begin{equation*}
L^{1}_{k}(\R):=\biggl\{ f \in L^{1}(\R); \ \|f\|_{L^{1}_{k}}:=\int_{\R}|f(x)|(1+|x|)^{k}dx<\infty \biggl\}.
\end{equation*}

In the following, for $f, g \in L^{2}(\R)\cap L^{1}(\R)$, we denote the Fourier transform of $f$ and the inverse Fourier transform of $g$ as follows:
\begin{align*}
\hat{f}(\xi):=\mathcal{F}[f](\xi)=\frac{1}{\sqrt{2\pi}}\int_{\R}e^{-ix\xi}f(x)dx, \ \ \check{g}(x):=\mathcal{F}^{-1}[g](x)=\frac{1}{\sqrt{2\pi}}\int_{\R}e^{ix\xi}g(\xi)d\xi.
\end{align*}
Then, for $s\ge0$, we define the Sobolev spaces by 
\begin{equation*}
H^{s}(\R):=\biggl\{f\in L^{2}(\R); \ \|f\|_{H^{s}}:=\biggl(\int_{\R}(1+|\xi|^{2})^{s}|\hat{f}(\xi)|^{2}d\xi\biggl)^{1/2}<\infty \biggl\}. 
\end{equation*}
To express Sobolev spaces, for $1\le p\le \infty$, we also set 
\begin{equation*}
W^{m,p}(\R):=\biggl\{ f\in L^{p}(\R); \ \|f\|_{W^{m,p}}:=\biggl(\sum_{n=0}^{m}\| \p_{x}^{n}f\|_{L^{p}}^{p}\biggl)^{1/p}<\infty \biggl\}.
\end{equation*}

Throughout this paper, $C$ denotes various positive constants, which may vary from line to line during computations. Also, it may depend on the norm of the initial data or other parameters. However, we note that it does not depend on the space variable $x$ and the time variable $t$.

\section{Global Existence and Decay Estimates}  

In this section, we shall prove the global existence and decay estimates for the solutions to~\eqref{VFW}. 
First, we introduce the Green function associated with the linear part of the equation in~\eqref{VFW}:
\begin{equation}\label{2-1}
T(x, t):=\mathcal{F}^{-1}[e^{-\mu t|\xi|^{2}-\frac{i2Bbt\xi}{b^{2}+\xi^{2}}}](x).
\end{equation}
We can show the following estimate of $T(x, t)$. The proof is completely the same as Lemma 2.2 in \cite{F19-1}.
\begin{lem}
\label{lem2-1}
Let $s\ge1$. Suppose $f\in L^{1}(\R) \cap H^{s}(\R)$. Then the estimate
\begin{equation}
\label{2-2}
\| \p^{l}_{x}(T(t)*f)\|_{L^{2}} \le C(1+t)^{-\frac{1}{4}-\frac{l}{2}}\|f\|_{L^{1}}+Ce^{-\mu t}\| \p^{l}_{x}f\|_{L^{2}}, \ \ t\ge0
\end{equation}
holds for any integer $0\le l \le s$.
\end{lem}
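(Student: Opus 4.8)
The strategy is to estimate $\widehat{T}(\xi,t)=e^{-\mu t|\xi|^2 - i2Bbt\xi/(b^2+\xi^2)}$ directly, splitting frequency space into the low-frequency region $|\xi|\le 1$ and the high-frequency region $|\xi|\ge 1$, and then to transfer these bounds to $\partial_x^l(T(t)*f)$ via Plancherel's theorem. This is precisely the mechanism used for Lemma~2.2 in \cite{F19-1}, and the only new feature here is the phase $2Bbt\xi/(b^2+\xi^2)$ in place of $\gamma t\xi^3$; since the phase is purely imaginary it has modulus one and does not affect the size estimates, so the argument carries over verbatim.

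First I would treat the low-frequency part. On $|\xi|\le 1$ one has $|\widehat{T}(\xi,t)| = e^{-\mu t|\xi|^2}$, and $|\widehat{\partial_x^l(T(t)*f)}(\xi)| = |\xi|^l e^{-\mu t|\xi|^2}|\widehat f(\xi)| \le |\xi|^l e^{-\mu t|\xi|^2}\|f\|_{L^1}/\sqrt{2\pi}$, using $\|\widehat f\|_{L^\infty}\le \|f\|_{L^1}/\sqrt{2\pi}$. Integrating the square over $|\xi|\le 1$ and using the standard scaling bound $\int_{\R}|\xi|^{2l}e^{-2\mu t|\xi|^2}\,d\xi \le C(1+t)^{-\frac12 - l}$ gives the first term $C(1+t)^{-\frac14-\frac{l}{2}}\|f\|_{L^1}$ (the $(1+t)$ rather than $t$ is arranged by treating $0\le t\le 1$ trivially, where everything is bounded). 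For the high-frequency part, on $|\xi|\ge 1$ the dissipation dominates: $|\xi|^{2l}e^{-2\mu t|\xi|^2} \le C_l e^{-\mu t|\xi|^2} \le C_l e^{-\mu t}e^{-\mu t(|\xi|^2-1)}$, hmm — more carefully, $|\xi|^{2l}e^{-2\mu t|\xi|^2}\le C e^{-2\mu t} (1+|\xi|^2)^l$ for $|\xi|\ge1$ and $t\ge0$ is not quite right either; the clean route is $|\xi|^{2l}e^{-2\mu t|\xi|^2} = |\xi|^{2l}e^{-\mu t|\xi|^2}\cdot e^{-\mu t|\xi|^2} \le C(1+|\xi|^2)^l \cdot e^{-\mu t}$ on $|\xi|\ge 1$, valid since $|\xi|^{2l}e^{-\mu t|\xi|^2}$ is bounded by a constant times $(1+|\xi|^2)^l$ and $e^{-\mu t|\xi|^2}\le e^{-\mu t}$ there. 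Then $\int_{|\xi|\ge1} |\xi|^{2l}e^{-2\mu t|\xi|^2}|\widehat f(\xi)|^2\,d\xi \le Ce^{-2\mu t}\int_{\R}(1+|\xi|^2)^l|\widehat f(\xi)|^2\,d\xi \le Ce^{-2\mu t}\|f\|_{H^l}^2$, and since $l\le s$ and $|\xi|^l\widehat f(\xi)$ controls $\|\partial_x^l f\|_{L^2}$, taking square roots yields the term $Ce^{-\mu t}\|\partial_x^l f\|_{L^2}$. Summing the two regions via Plancherel gives \eqref{2-2}.

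The only point requiring a word of care — and the place I would be most careful — is the bookkeeping of powers of $(1+t)$ versus $t$ near $t=0$: for $t\in[0,1]$ one should note that both $(1+t)^{-\frac14-\frac{l}{2}}$ and $e^{-\mu t}$ are comparable to constants and the estimate is immediate, while for $t\ge1$ the bounds $t\le 1+t\le 2t$ let one pass freely between $t$ and $1+t$ in the Gaussian scaling estimate. Beyond this, there is no genuine obstacle: the phase being unimodular, the dispersion term contributes nothing to the modulus of $\widehat T$, so the proof is structurally identical to the KdV-Burgers case in \cite{F19-1}, which is exactly why the authors can assert ``the proof is completely the same.''
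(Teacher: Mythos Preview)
Your proposal is correct and follows exactly the approach the paper has in mind; the paper gives no proof and simply cites Lemma~2.2 of \cite{F19-1}, which is precisely the Plancherel-plus-frequency-splitting argument you outline, with the unimodular phase factor playing no role. One cosmetic point: in the high-frequency region it is cleanest to use $e^{-2\mu t|\xi|^{2}}\le e^{-2\mu t}$ for $|\xi|\ge1$ directly, which gives $\int_{|\xi|\ge1}|\xi|^{2l}e^{-2\mu t|\xi|^{2}}|\hat f(\xi)|^{2}\,d\xi\le e^{-2\mu t}\|\p_{x}^{l}f\|_{L^{2}}^{2}$ and matches the $\|\p_{x}^{l}f\|_{L^{2}}$ in the statement without passing through $\|f\|_{H^{l}}$.
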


\noindent
Now, let us prove the global existence and the decay estimates of the solutions to \eqref{VFW}. The proof of the following theorem is almost the same as Proposition~2.3 in \cite{F19-2}. 
\begin{thm}
\label{GE}
Let $s\ge1$. Assume that $u_{0}\in L^{1}(\R)\cap H^{s}(\R)$ and $E_{s, 0}$ is sufficiently small. Then \eqref{VFW} has a unique global solution $u(x, t) \in C^{0}([0, \infty); H^{s})$. Moreover, the solution satisfies
\begin{equation}
\label{2-5}
\| \p^{l}_{x}u(\cdot, t)\|_{L^{2}}\le CE_{s, 0}(1+t)^{-\frac{1}{4}-\frac{l}{2}}, \ \ t\ge0 
\end{equation}
for any integer $0\le l \le s$. In particular, we get 
\begin{equation}
\label{2-6}
\| \p^{l}_{x}u(\cdot, t)\|_{L^{\infty}}\le CE_{s, 0}(1+t)^{-\frac{1}{2}-\frac{l}{2}}, \ \ t\ge0
\end{equation}
for any integer $0\le l \le s-1$. 
\end{thm}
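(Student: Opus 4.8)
\medskip
\noindent
\textbf{Proof proposal.}
The plan is to run the standard contraction-plus-bootstrap scheme on the Duhamel formulation, with the nonlocal dispersion absorbed into the linear semigroup and all the linear input supplied by Lemma~\ref{lem2-1}, closely following Proposition~2.3 of~\cite{F19-2}. First I would rewrite \eqref{VFW'} in integral form: since $\mu\p_{x}^{2}-2Bb(b^{2}-\p_{x}^{2})^{-1}\p_{x}$ generates the convolution semigroup $f\mapsto T(t)\ast f$ from \eqref{2-1}, and $\beta uu_{x}=\tfrac{\beta}{2}\p_{x}(u^{2})$, any solution of \eqref{VFW} must satisfy
\begin{equation*}
u(t)=T(t)\ast u_{0}-\frac{\beta}{2}\int_{0}^{t}\p_{x}T(t-\tau)\ast u(\tau)^{2}\,d\tau .
\end{equation*}
Working in
\begin{equation*}
X_{T}:=\Bigl\{u\in C^{0}([0,T];H^{s}):\ \|u\|_{X_{T}}:=\sup_{0\le t\le T}\ \sum_{l=0}^{s}(1+t)^{\frac14+\frac{l}{2}}\|\p_{x}^{l}u(\cdot,t)\|_{L^{2}}<\infty\Bigr\},
\end{equation*}
I would first establish local existence: the right-hand side of the integral equation is a contraction on a small ball of $C^{0}([0,T];H^{s})$ for $T$ small, using that $H^{s}(\R)$ is a Banach algebra for $s\ge1$ (hence $\|u^{2}\|_{H^{s}}\le C\|u\|_{H^{s}}^{2}$) and that $(b^{2}-\p_{x}^{2})^{-1}\p_{x}$ is a Fourier multiplier with bounded, decaying symbol $i\xi/(b^{2}+\xi^{2})$, hence bounded on every $H^{s}$; this gives a maximal solution with the usual blow-up alternative.

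The core of the argument is the uniform a priori estimate $\|u\|_{X_{T}}\le CE_{s,0}+C\|u\|_{X_{T}}^{2}$, which by the smallness of $E_{s,0}$ and a standard continuation argument simultaneously yields global existence and \eqref{2-5}. For the linear term, Lemma~\ref{lem2-1} gives $\|\p_{x}^{l}(T(t)\ast u_{0})\|_{L^{2}}\le CE_{s,0}(1+t)^{-\frac14-\frac l2}$ directly. For the Duhamel term I would split $\int_{0}^{t}=\int_{0}^{t/2}+\int_{t/2}^{t}$: on $[0,t/2]$ keep all derivatives on $T(t-\tau)$ and use $\|u(\tau)^{2}\|_{L^{1}}=\|u(\tau)\|_{L^{2}}^{2}\le\|u\|_{X_{t}}^{2}(1+\tau)^{-1/2}$ together with the exponentially small $H^{s}$-contribution of Lemma~\ref{lem2-1}; on $[t/2,t]$ move all derivatives onto $u^{2}$ and use the algebraic bounds $\|\p_{x}^{k}(u^{2})\|_{L^{1}}\le C\|u\|_{X_{t}}^{2}(1+\tau)^{-1/2-k/2}$ (Cauchy--Schwarz) and $\|\p_{x}^{k}(u^{2})\|_{L^{2}}\le C\|u\|_{X_{t}}^{2}(1+\tau)^{-3/4-k/2}$ (Gagliardo--Nirenberg). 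Computing the resulting $\tau$-integrals then yields $C\|u\|_{X_{t}}^{2}(1+t)^{-1/4-l/2}$ for every $0\le l\le s-1$. Finally, \eqref{2-6} follows from \eqref{2-5} via the Gagliardo--Nirenberg inequality $\|\p_{x}^{l}u\|_{L^{\infty}}\le C\|\p_{x}^{l}u\|_{L^{2}}^{1/2}\|\p_{x}^{l+1}u\|_{L^{2}}^{1/2}$, which is bounded by $CE_{s,0}(1+t)^{-\frac18-\frac l4}(1+t)^{-\frac18-\frac{l+1}{4}}=CE_{s,0}(1+t)^{-\frac12-\frac l2}$ for $0\le l\le s-1$.

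The main obstacle is the top-order case $l=s$: the Duhamel bound for $\p_{x}^{s}(u^{2})$ would formally require $s+1$ spatial derivatives that the $H^{s}$ framework does not supply, so I would estimate $\|\p_{x}^{s}u(\cdot,t)\|_{L^{2}}$ separately by an energy identity obtained from applying $\p_{x}^{s}$ to \eqref{VFW'} and multiplying by $\p_{x}^{s}u$. There the viscosity produces the good dissipative term $\mu\|\p_{x}^{s+1}u\|_{L^{2}}^{2}$; the nonlocal dispersion term contributes nothing because its symbol $i2Bb\xi/(b^{2}+\xi^{2})$ is purely imaginary, so the corresponding operator is skew-adjoint; and the nonlinear contribution is controlled by Gagliardo--Nirenberg inequalities, partly absorbed into the dissipation (using that $\|u\|_{L^{\infty}}$ is small) and partly handled via the already-established lower-order decay and a Gronwall argument, giving $\|\p_{x}^{s}u(\cdot,t)\|_{L^{2}}\le CE_{s,0}(1+t)^{-1/4-s/2}$. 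Apart from this point the argument is entirely parallel to~\cite[Proposition~2.3]{F19-2}; the nonlocality of the dispersion causes no real difficulty, since $(b^{2}-\p_{x}^{2})^{-1}\p_{x}$ is bounded on all the spaces involved.
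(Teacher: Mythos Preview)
Your overall strategy is sound and would work, but it differs from the paper's in one important respect. The paper does \emph{not} separate the top order $l=s$ and does not invoke an energy identity at all: it runs a single global contraction argument on
\[
X:=\Bigl\{u\in C^{0}([0,\infty);H^{s}):\ \|u\|_{X}:=\sum_{l=0}^{s}\sup_{t\ge0}(1+t)^{\frac14+\frac l2}\|\p_{x}^{l}u(\cdot,t)\|_{L^{2}}<\infty\Bigr\},
\]
and for the Duhamel term it works on the Fourier side, splitting $\{|\xi|\le1\}$ versus $\{|\xi|\ge1\}$ rather than applying Lemma~\ref{lem2-1} directly in physical space. On the high-frequency piece the key trick is a Cauchy--Schwarz inequality in~$\tau$,
\[
\int_{0}^{t}|\xi|e^{-\mu(t-\tau)|\xi|^{2}}\bigl|(i\xi)^{l}\widehat{u^{2}-v^{2}}(\xi,\tau)\bigr|\,d\tau
\le C\Bigl(\int_{0}^{t}e^{-\mu(t-\tau)|\xi|^{2}}\bigl|(i\xi)^{l}\widehat{u^{2}-v^{2}}(\xi,\tau)\bigr|^{2}d\tau\Bigr)^{1/2},
\]
which absorbs the extra factor $|\xi|$ coming from $\p_{x}(u^{2})$ without ever asking for $\p_{x}^{s+1}(u^{2})$. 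This is exactly what eliminates the ``derivative-loss'' obstacle you identified; only $\|\p_{x}^{l}(u^{2}-v^{2})\|_{L^{1}}$ and $\|\p_{x}^{l}(u^{2}-v^{2})\|_{L^{2}}$ with $l\le s$ are needed, and those follow from the Leibniz rule and the $X$-norm.

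Your route---local existence plus continuation, physical-space time splitting for $l\le s-1$, and a separate energy estimate at $l=s$ exploiting the skew-adjointness of $(b^{2}-\p_{x}^{2})^{-1}\p_{x}$---is a legitimate alternative and the skew-adjointness observation is correct. Its cost is that extracting the sharp rate $(1+t)^{-1/4-s/2}$ from the energy inequality is not quite a one-line Gronwall: you need an interpolation step linking $\|\p_{x}^{s}u\|_{L^{2}}$ to the dissipation $\|\p_{x}^{s+1}u\|_{L^{2}}^{2}$ and the already-decaying lower-order norms, which you have only sketched. The paper's Fourier/Schwarz argument sidesteps this entirely and delivers all orders $0\le l\le s$ in one stroke, which is why it is both shorter and the reason the statement before the proof can say ``almost the same as Proposition~2.3 in~\cite{F19-2}.''
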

\begin{proof}
We consider the following integral equation associated with the initial value problem \eqref{VFW}:
\begin{align}
\label{2-7}
\begin{split}
u(t)&=T(t)*u_{0}-\frac{\beta}{2}\int_{0}^{t}T(t-\tau)*((u^{2})_{x})(\tau)d\tau \\
&=T(t)*u_{0}-\frac{\beta}{2}\int_{0}^{t}(\p_{x}T(t-\tau))*(u^{2})(\tau)d\tau. 
\end{split}
\end{align}
We solve this integral equation by using the contraction mapping principle for the mapping 
\begin{equation}
\label{2-8}
N[u]:=T(t)*u_{0}-\frac{\beta}{2}\int_{0}^{t}(\p_{x}T(t-\tau))*(u^{2})(\tau)d\tau.
\end{equation}
We set $N_{0}:=T(t)*u_{0}$. Let us introduce the Banach space $X$ as follows:
\begin{equation}
\label{2-9}
X:=\biggl\{u\in C^{0}([0, \infty); H^{s}); \ \|u\|_{X}:=\sum_{l=0}^{s}\sup_{t\ge0}(1+t)^{\frac{1}{4}+\frac{l}{2}}\|\p_{x}^{l}u(\cdot, t)\|_{L^{2}}<\infty \biggl\}.
\end{equation}
From Lemma \ref{lem2-1}, we have
\begin{equation}
\label{2-10}
\exists C_{0}>0 \ \ s.t. \ \ \|N_{0}\|_{X}\le C_{0}E_{s, 0}.
\end{equation}
We apply the contraction mapping principle to \eqref{2-8} on the closed subset $Y$ of $X$ below: 
\begin{equation*}
Y:=\{u\in X; \ \|u\|_{X}\le2C_{0}E_{s, 0}\}.
\end{equation*}
Then it is sufficient to show the following estimates: 
\begin{equation}
\label{2-11}
\|N[u]\|_{X}\le2C_{0}E_{s, 0}, 
\end{equation}
\begin{equation}
\label{2-12}
\|N[u]-N[v]\|_{X}\le \frac{1}{2}\|u-v\|_{X} 
\end{equation}
for $u, v \in Y$. If we have shown \eqref{2-11} and \eqref{2-12}, by using the contraction mapping principle, we see that \eqref{2-7} has a unique global solution in $Y$. 

Here and later $E_{s, 0}$ is assumed to be small. First, from the Sobolev inequality 
\begin{equation*}
\|f\|_{L^{\infty}}\le \sqrt{2}\|f\|_{L^{2}}^{1/2}\|f'\|_{L^{2}}^{1/2}, \ \ f\in H^{1}(\R)
\end{equation*}
for $0\le l \le s-1$, we have 
\begin{equation}
\label{2-13}
\|\p_{x}^{l}u(\cdot, t)\|_{L^{\infty}}\le \|u\|_{X}(1+t)^{-\frac{1}{2}-\frac{l}{2}}.
\end{equation}
Before proving \eqref{2-11} and \eqref{2-12}, we prepare the following estimates for $0\le l \le s$, $u, v \in Y$:
\begin{align}
\label{2-14}
\|\p_{x}^{l}(u^{2}-v^{2})(\cdot, t)\|_{L^{1}}\le&C(\|u\|_{X}+\|v\|_{X})\|u-v\|_{X}(1+t)^{-\frac{1}{2}-\frac{l}{2}}, \\
\label{2-15}
\|\p_{x}^{l}(u^{2}-v^{2})(\cdot, t)\|_{L^{2}}\le&C(\|u\|_{X}+\|v\|_{X})\|u-v\|_{X}(1+t)^{-\frac{3}{4}-\frac{l}{2}}. 
\end{align}
We shall prove only \eqref{2-14}, since we can prove \eqref{2-15} in the same way. We have from \eqref{2-9} and \eqref{2-13}
\begin{align*}
\begin{split}
\|\p_{x}^{l}(u^{2}-v^{2})(\cdot, t)\|_{L^{1}}&= \|\p_{x}^{l}((u+v)(u-v))(\cdot, t)\|_{L^{1}}\\
&\le C\sum_{m=0}^{l}(\|\p_{x}^{l-m}u(\cdot, t)\|_{L^{2}}+\|\p_{x}^{l-m}v(\cdot, t)\|_{L^{2}})\|\p_{x}^{m}(u-v)(\cdot, t)\|_{L^{2}} \\
&\le C\sum_{m=0}^{l}(\|u\|_{X}+\|v\|_{X})(1+t)^{-\frac{1}{4}-\frac{l-m}{2}}\|u-v\|_{X}(1+t)^{-\frac{1}{4}-\frac{m}{2}}\\
&\le C(\|u\|_{X}+\|v\|_{X})\|u-v\|_{X}(1+t)^{-\frac{1}{2}-\frac{l}{2}}.
\end{split}
\end{align*}

Now we prove \eqref{2-11} and \eqref{2-12}. Using \eqref{2-8}, we obtain 
\begin{equation}
\label{2-16}
(N[u]-N[v])(t)=-\int_{0}^{t}(\p_{x}T(t-\tau))*(u^{2}-v^{2})(\tau)d\tau=:I(x, t). 
\end{equation}
By Plancherel's theorem, we have
\begin{align}
\label{2-17}
\begin{split}
\|\p_{x}^{l}I(\cdot, t)\|_{L^{2}} &\le \|(i\xi)^{l}\hat{I}(\xi, t)\|_{L^{2}(|\xi|\le1)}+\|(i\xi)^{l}\hat{I}(\xi, t)\|_{L^{2}(|\xi|\ge1)}=:I_{1}+I_{2}. 
\end{split}
\end{align}
Since 
\begin{equation*}
\int_{|\xi|\le1}|\xi|^{j}e^{-2(t-\tau)|\xi|^{2}}d\xi \le C(1+t-\tau)^{-\frac{j}{2}-\frac{1}{2}}, \ \ j\ge0, 
\end{equation*}
and \eqref{2-14}, we have
\begin{align}
\label{2-18}
\begin{split}
I_{1}&\le C\int_{0}^{t}\|(i\xi)^{l+1}e^{-\mu(t-\tau)|\xi|^{2}-\frac{i2Bb(t-\tau)\xi}{b^{2}+\xi^{2}}}\mathcal{F}[u^{2}-v^{2}](\xi, \tau)\|_{L^{2}(|\xi|\le1)}d\tau\\
&\le C\int_{0}^{t/2}\sup_{|\xi|\le1}|\mathcal{F}[u^{2}-v^{2}](\xi, \tau)|\biggl(\int_{|\xi|\le1}|\xi|^{2(l+1)}e^{-2\mu(t-\tau)|\xi|^{2}}d\xi \biggl)^{1/2}d\tau\\
&\ \ \ +C\int_{t/2}^{t}\sup_{|\xi|\le1}|(i\xi)^{l}\mathcal{F}[u^{2}-v^{2}](\xi, \tau)|\biggl(\int_{|\xi|\le1}|\xi|^{2}e^{-2\mu(t-\tau)|\xi|^{2}}d\xi \biggl)^{1/2}d\tau\\
&\le C\int_{0}^{t/2}(1+t-\tau)^{-\frac{3}{4}-\frac{l}{2}}\|(u^{2}-v^{2})(\cdot, \tau)\|_{L^{1}}d\tau+C\int_{t/2}^{t}(1+t-\tau)^{-\frac{3}{4}}\|\p_{x}^{l}(u^{2}-v^{2})(\cdot, \tau)\|_{L^{1}}d\tau\\
&\le C(\|u\|_{X}+\|v\|_{X})\|u-v\|_{X}\\
&\ \ \ \times \biggl(\int_{0}^{t/2}(1+t-\tau)^{-\frac{3}{4}-\frac{l}{2}}(1+\tau)^{-\frac{1}{2}}d\tau+\int_{t/2}^{t}(1+t-\tau)^{-\frac{3}{4}}(1+\tau)^{-\frac{1}{2}-\frac{l}{2}}d\tau\biggl)\\
&\le C(\|u\|_{X}+\|v\|_{X})\|u-v\|_{X}(1+t)^{-\frac{1}{4}-\frac{l}{2}}.
\end{split}
\end{align}
For $|\xi|\ge1$, by using the Schwarz inequality, we have 
\begin{align*}
\begin{split}
|(i\xi)^{l}\hat{I}(\xi, t)|&=\biggl|(i\xi)^{l+1}\int_{0}^{t}e^{-\mu(t-\tau)|\xi|^{2}-\frac{i2Bb(t-\tau)\xi}{b^{2}+\xi^{2}}}\mathcal{F}[u^{2}-v^{2}](\xi, \tau)d\tau\biggl| \\
&\le C\int_{0}^{t}|\xi|e^{-\mu(t-\tau)|\xi|^{2}}|(i\xi)^{l}\mathcal{F}[u^{2}-v^{2}](\xi, \tau)|d\tau \\
&\le C\biggl(\int_{0}^{t}|\xi|^{2}e^{-\mu(t-\tau)|\xi|^{2}}d\tau \biggl)^{1/2}\biggl(\int_{0}^{t}e^{-\mu(t-\tau)|\xi|^{2}}|(i\xi)^{l}\mathcal{F}[u^{2}-v^{2}](\xi, \tau)|^{2}d\tau \biggl)^{1/2} \\
&\le C\biggl(\int_{0}^{t}e^{-\mu(t-\tau)|\xi|^{2}}|(i\xi)^{l}\mathcal{F}[u^{2}-v^{2}](\xi, \tau)|^{2}d\tau \biggl)^{1/2}.
\end{split}
\end{align*}
Therefore we have from \eqref{2-15}
\begin{align}
\label{2-19}
\begin{split}
I_{2}&\le C\biggl(\int_{|\xi|\ge1}\int_{0}^{t}e^{-\mu(t-\tau)|\xi|^{2}}|(i\xi)^{l}\mathcal{F}[u^{2}-v^{2}](\xi, \tau)|^{2}d\tau d\xi\biggl)^{1/2} \\
&\le C\biggl(\int_{0}^{t}e^{-\mu(t-\tau)}\int_{|\xi|\ge1}|(i\xi)^{l}\mathcal{F}[u^{2}-v^{2}](\xi, \tau)|^{2}d\xi d\tau \biggl)^{1/2} \\
&\le C\biggl(\int_{0}^{t}e^{-\mu(t-\tau)}\|\p_{x}^{l}(u^{2}-v^{2})(\cdot, \tau)\|_{L^{2}}^{2}d\tau \biggl)^{1/2} \\
&\le C(\|u\|_{X}+\|v\|_{X})\|u-v\|_{X}\biggl(\int_{0}^{t}e^{-\mu(t-\tau)}(1+\tau)^{-\frac{3}{2}-l}d\tau \biggl)^{1/2} \\
&\le C(\|u\|_{X}+\|v\|_{X})\|u-v\|_{X}(1+t)^{-\frac{3}{4}-\frac{l}{2}}. 
\end{split}
\end{align}
Combining \eqref{2-16} through \eqref{2-19}, we obtain 
\begin{equation*}
\|\p_{x}^{l}(N[u]-N[v])(t)\|_{L^{2}}\le C(\|u\|_{X}+\|v\|_{X})\|u-v\|_{X}(1+t)^{-\frac{1}{4}-\frac{l}{2}}, \ \ t\ge0
\end{equation*}
for $0\le l \le s$. Thus, there exists a positive constant $C_{1}>0$ such that 
\begin{equation*}
\|N[u]-N[v]\|_{X}\le C_{1}(\|u\|_{X}+\|v\|_{X})\|u-v\|_{X}\le 4C_{0}C_{1}E_{s, 0}\|u-v\|_{X}, \ \ u, v \in Y.
\end{equation*}
Choosing $E_{s, 0}$ which satisfies $4C_{0}C_{1}E_{s, 0} \le1/2$, then we have \eqref{2-12}. 
Moreover, taking $v=0$ in \eqref{2-12}, it follows that 
\begin{equation*}
\|N[u]-N[0]\|_{X}\le C_{0}E_{s, 0}.
\end{equation*}
Since $N[0]=N_{0}$, we obtain from \eqref{2-10} that 
\begin{equation*}
\|N[u]\|_{X}\le \|N_{0}\|_{X}+\|N[u]-N[0]\|_{X} \le 2C_{0}E_{s, 0}.
\end{equation*}
Therefore, we get \eqref{2-11}. This completes the proof of the global existence and of the $L^{2}$-decay estimate \eqref{2-5}. 
The $L^{\infty}$-estimate \eqref{2-6} immediately follows from the Sobolev inequality.  
\end{proof}

\section{Basic Estimates and Auxiliary Problem}

In this section, we prepare a couple of lemmas to prove the main theorems. First, we introduce some decay estimates for the asymptotic functions. 
Now, let us treat the nonlinear diffusion wave $\chi(x, t)$ defined by~\eqref{chi1}. For this function, it is easy to see that 
\begin{equation}
\label{chi-est}
|\chi(x, t)| \le C|M|(1+t)^{-\frac{1}{2}}e^{-\frac{(x-\alpha t)^{2}}{4\mu(1+t)}}, \ x\in \R, \ t\ge0.
\end{equation}
Moreover, $\chi(x, t)$ satisfies the following $L^{p}$-decay estimate (for the proof, see Lemma~4.3 in~\cite{KU17}).
\begin{lem}
\label{lem.chi-decay}
Let $k$, $l$ and $m$ be non-negative integers. Then, for $|M|\le1$ and $p\in[1, \infty]$, we have
\begin{equation}
\label{chi-decay}
\| \p_{t}^{k}\p_{x}^{l}(\p_{t}+\alpha \p_{x})^{m}\chi(\cdot, t)\|_{L^{p}}\le C|M|(1+t)^{-\frac{1}{2}+\frac{1}{2p}-\frac{k+l+2m}{2}}, \ \ t\ge0.
\end{equation}
\end{lem}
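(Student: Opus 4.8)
The plan is to exploit the explicit self-similar structure of $\chi(x,t)$ recorded in \eqref{chi1}--\eqref{chi2}. Write $y=\frac{x-\alpha(1+t)}{\sqrt{1+t}}$ for the self-similar variable, so that $\chi(x,t)=(1+t)^{-1/2}\chi_{*}(y)$. First I would establish Gaussian-type bounds for $\chi_{*}$ and all of its derivatives. Setting $A:=e^{\beta M/(2\mu)}-1$ and $h(y):=\int_{y/\sqrt{4\mu}}^{\infty}e^{-z^{2}}\,dz\in(0,\sqrt{\pi})$, we have $\chi_{*}(y)=\frac{\sqrt{\mu}}{\beta}\cdot\frac{Ae^{-y^{2}/(4\mu)}}{\sqrt{\pi}+Ah(y)}$; since $A>-1$ for every $M$ and $|A|\le C|M|$ for $|M|\le1$, the denominator stays bounded away from $0$ uniformly in $y\in\R$ and in $M$ with $|M|\le1$. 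Differentiating by the quotient rule and using $h'(y)=-\frac{1}{\sqrt{4\mu}}e^{-y^{2}/(4\mu)}$, an induction on the order of the derivative shows that each $\chi_{*}^{(j)}$ carries the overall factor $A$ together with a factor $e^{-y^{2}/(4\mu)}$ multiplied by a quantity bounded by a polynomial in $y$; hence $|\chi_{*}^{(j)}(y)|\le C_{j}|M|e^{-y^{2}/(8\mu)}$, and in particular $\|\chi_{*}^{(j)}\|_{L^{p}(\R)}\le C|M|$ for every $j\ge0$ and $p\in[1,\infty]$.

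Next I would track the action of the three operators in the self-similar frame. From $\partial_{x}y=(1+t)^{-1/2}$ one gets $\partial_{x}^{l}\chi(x,t)=(1+t)^{-\frac12-\frac{l}{2}}\chi_{*}^{(l)}(y)$. For $L:=\partial_{t}+\alpha\partial_{x}$, a direct computation gives $(\partial_{t}+\alpha\partial_{x})y=-\frac12(1+t)^{-1}y$, whence for any smooth profile $\phi$ and any $a\in\R$
\begin{equation*}
(\partial_{t}+\alpha\partial_{x})\bigl[(1+t)^{-a}\phi(y)\bigr]=(1+t)^{-a-1}\Bigl(-a\,\phi(y)-\tfrac12\,y\,\phi'(y)\Bigr).
\end{equation*}
Iterating this identity, an induction shows that $L^{m}\chi(x,t)=(1+t)^{-\frac12-m}g_{m}(y)$, where $g_{m}$ is an explicit finite linear combination of $\chi_{*},\chi_{*}',\dots,\chi_{*}^{(m)}$ with polynomial coefficients in $y$; by the bounds of the previous paragraph, $g_{m}$ and all its derivatives again satisfy $|g_{m}^{(i)}(y)|\le C|M|e^{-y^{2}/(8\mu)}$.

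Since $\partial_{x}$ and $L$ commute, $\partial_{t}=L-\alpha\partial_{x}$ yields the binomial identity $\partial_{t}^{k}=\sum_{j=0}^{k}\binom{k}{j}(-\alpha)^{k-j}L^{j}\partial_{x}^{k-j}$, so combining the last two displays,
\begin{equation*}
\partial_{t}^{k}\partial_{x}^{l}L^{m}\chi(x,t)=\sum_{j=0}^{k}\binom{k}{j}(-\alpha)^{k-j}(1+t)^{-\frac12-m-\frac{k+l-j}{2}-j}\,G_{j}(y),
\end{equation*}
where each $G_{j}$ is a finite linear combination, with polynomial coefficients in $y$, of $\chi_{*},\dots,\chi_{*}^{(k+l+m)}$, hence $|G_{j}(y)|\le C|M|e^{-y^{2}/(8\mu)}$. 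The final step is the change of variables $x\mapsto y$ (so $dx=(1+t)^{1/2}\,dy$) in the $L^{p}$ norm, which gives $\|\varphi(y(\cdot,t))\|_{L^{p}(\R)}=(1+t)^{\frac{1}{2p}}\|\varphi\|_{L^{p}(\R)}$ for every $p\in[1,\infty]$ (the exponent being $0$ when $p=\infty$). Applying this to the $j$-th summand and noting $-\frac12-m-\frac{k+l-j}{2}-j=-\frac12-m-\frac{k+l}{2}-\frac{j}{2}$, each term is bounded by $C|M|(1+t)^{-\frac12+\frac1{2p}-m-\frac{k+l}{2}-\frac j2}$; since $t\ge0$ these are all dominated by the $j=0$ term, so the whole sum is $\le C|M|(1+t)^{-\frac12+\frac1{2p}-\frac{k+l+2m}{2}}$, which is exactly \eqref{chi-decay}. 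I expect the main obstacle to be the first step: verifying the uniform Gaussian bounds on $\chi_{*}$ and on each of its derivatives with a constant proportional to $|M|$ valid for $|M|\le1$, together with the bookkeeping showing that the rescaled profiles $g_{m}$ and $G_{j}$ stay in this class; the scaling computations in the remaining steps are routine.
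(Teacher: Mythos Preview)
Your proposal is correct. The paper itself does not give a proof of this lemma; it simply cites Lemma~4.3 in \cite{KU17}. Your argument---Gaussian bounds on $\chi_{*}$ and its derivatives via the explicit formula, the observation that $L=\partial_{t}+\alpha\partial_{x}$ acts on self-similar functions by $L[(1+t)^{-a}\phi(y)]=(1+t)^{-a-1}(-a\phi-\tfrac12 y\phi')$, the binomial expansion $\partial_{t}^{k}=(L-\alpha\partial_{x})^{k}$, and the final change of variables---is exactly the standard route one expects for this kind of estimate, and all the scaling exponents check out.
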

\noindent 
Next, we deal with the modified heat kernel 
\begin{equation}\label{heat}
G_{0}(x, t):=\frac{1}{\sqrt{4\pi \mu t}}e^{-\frac{(x-\alpha t)^{2}}{4\mu t}}, \ x\in \R, \ t>0, \ \alpha \in \R.
\end{equation}
This is a fundamental solution to the convection-heat equation $w_{t}+\alpha w_{x}=\mu w_{xx}$. Moreover, this function satisfies the following decay estimates (for the proof, see e.g. Lemma~7.1 in~\cite{UK07}): 
\begin{lem}
\label{lem.heat-decay}
Let $k$ and $l$ be non-negative integers. Then, for $p\in [1, \infty]$, we have
\begin{equation}
\label{heat-decay1} 
\| \p_{t}^{k}\p_{x}^{l}G_{0}(\cdot, t)\|_{L^{p}}\le Ct^{-\frac{1}{2}+\frac{1}{2p}-\frac{k+l}{2}}, \ \ t>0.
\end{equation}
Moreover, let $\phi \in L^{1}_{1}(\R)$ and suppose $\displaystyle \int_{\R}\phi(x)dx=0$, then we have
\begin{equation}
\label{heat-decay2}
\|\p_{t}^{k}\p_{x}^{l}G_{0}(t)*\phi\|_{L^{p}}\le Ct^{-\frac{1}{2}+\frac{1}{2p}-\frac{k+l}{2}}(1+t)^{-\frac{1}{2}}\|\phi\|_{L^{1}_{1}}, \ t>0. 
\end{equation}
\end{lem}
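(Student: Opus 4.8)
\textbf{Proof proposal for Lemma \ref{lem.heat-decay}.}

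The plan is to prove both estimates by direct computation, exploiting the self-similar structure of $G_0$. First I would establish \eqref{heat-decay1}. Since $G_0(x,t)=(4\pi\mu t)^{-1/2}e^{-(x-\alpha t)^2/(4\mu t)}$ is, up to the translation $x\mapsto x-\alpha t$, the classical heat kernel, I would introduce the change of variable $y=(x-\alpha t)/\sqrt{4\mu t}$. Writing $G_0(x,t)=(4\pi\mu t)^{-1/2}g(y)$ with $g(y)=e^{-y^2}$, one checks that each application of $\p_x$ brings down a factor $t^{-1/2}$ times a Hermite-type polynomial in $y$ times $e^{-y^2}$, and similarly $\p_t$ acts through the chain rule producing an extra factor $t^{-1}$ (here the convection shift $\alpha t$ and the standard identity $\p_t G_0 = \mu \p_x^2 G_0$ can be used to trade $\p_t$ for two $\p_x$'s plus the $\alpha\p_x$ term, or one can simply differentiate the explicit formula). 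Thus $\p_t^k\p_x^l G_0(x,t) = t^{-1/2-(k+l)}P_{k,l}(y)e^{-y^2}$ for a polynomial $P_{k,l}$. Taking the $L^p$ norm in $x$ and substituting $dx=\sqrt{4\mu t}\,dy$ converts one power $t^{-1/2}$ into $t^{1/(2p)}$ via $\|t^{-1/2}h((\cdot-\alpha t)/\sqrt{4\mu t})\|_{L^p_x} = C t^{-1/2+1/(2p)}\|h\|_{L^p_y}$, and since $P_{k,l}(y)e^{-y^2}\in L^p(\R_y)$ for every $p\in[1,\infty]$, we obtain the stated bound $Ct^{-1/2+1/(2p)-(k+l)/2}$.

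For \eqref{heat-decay2} I would use the cancellation hypothesis $\int_\R \phi(x)\,dx=0$ together with a Taylor-type expansion of the kernel. Write
\begin{equation*}
(\p_t^k\p_x^l G_0(t)*\phi)(x) = \int_\R \bigl(\p_t^k\p_x^l G_0\bigr)(x-z,t)\,\phi(z)\,dz = \int_\R \Bigl[\bigl(\p_t^k\p_x^l G_0\bigr)(x-z,t) - \bigl(\p_t^k\p_x^l G_0\bigr)(x,t)\Bigr]\phi(z)\,dz,
\end{equation*}
where the subtraction is legitimate precisely because $\phi$ has zero mean. By the mean value theorem the bracket is bounded by $|z|\sup_{0\le\theta\le1}\bigl|\p_t^k\p_x^{l+1}G_0(x-\theta z,t)\bigr|$. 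I would then split the $z$-integral into $|z|\le \sqrt{t}$ and $|z|>\sqrt{t}$: on the first region use the mean value estimate and the fact that for $|z|\le\sqrt t$ the Gaussian $G_0(x-\theta z,t)$ is comparable to $G_0(x,t)$ up to a harmless constant, so the contribution is controlled by $\|\p_t^k\p_x^{l+1}G_0(\cdot,t)\|_{L^p}\int_{|z|\le\sqrt t}|z||\phi(z)|\,dz$, which by \eqref{heat-decay1} with $l+1$ in place of $l$ gives $C t^{-1/2+1/(2p)-(k+l+1)/2}\|\phi\|_{L^1_1}$; on the second region simply bound the bracket by the sum of two terms and use $|z|>\sqrt t$, i.e. $1\le |z|/\sqrt t \le |z|(1+|z|)/t$ loosely, to again extract a factor $t^{-1/2}$ against $\|\phi\|_{L^1_1}$. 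Collecting the two pieces yields $C t^{-1/2+1/(2p)-(k+l)/2}\,t^{-1/2}\|\phi\|_{L^1_1}$ for $t$ small; for $t\ge1$ one replaces $t^{-1/2}$ by $(1+t)^{-1/2}$ trivially, and for $0<t\le1$ the factor $(1+t)^{-1/2}$ is comparable to a constant so the bound \eqref{heat-decay1} already used suffices. This produces exactly \eqref{heat-decay2}.

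The only delicate point is the region-splitting in the second estimate: one must check that the Gaussian weight $e^{-(x-\theta z-\alpha t)^2/(4\mu t)}$ appearing after differentiation can indeed be absorbed into $e^{-(x-\alpha t)^2/(4\mu t)}$ (times a constant) uniformly in $\theta\in[0,1]$ when $|z|\le\sqrt t$, which follows from the elementary inequality $(a-b)^2\ge \tfrac12 a^2 - b^2$; and conversely that on $|z|>\sqrt t$ the crude bound still leaves enough decay in $t$. Neither step is hard, but getting the bookkeeping of the powers of $t$ right — so that the final exponent is $-1/2+1/(2p)-(k+l)/2$ with the extra genuine $(1+t)^{-1/2}$ rather than $t^{-1/2}$ — is where care is needed. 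Since the statement merely cites \cite{UK07} for the proof, I would in practice just refer to that reference, but the argument above is the standard one.
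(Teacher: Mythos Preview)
The paper does not give its own proof of this lemma; it simply cites Lemma~7.1 of \cite{UK07}. Your sketch is the standard argument, and you correctly anticipate that in practice one would just point to the reference.

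Two technical comments on your write-up. First, the pointwise formula $\p_t^k\p_x^l G_0(x,t) = t^{-1/2-(k+l)}P_{k,l}(y)e^{-y^2}$ has the wrong exponent (each $\p_x$ costs $t^{-1/2}$, so one expects $t^{-1/2-(k+l)/2}$), and more importantly it cannot be exactly of this single-scale form when $k\ge 1$: from $\p_tG_0=-\alpha\p_xG_0+\mu\p_x^2G_0$, every time derivative produces two terms with different $t$-scalings, so $\p_t^k\p_x^lG_0$ is a finite sum $\sum_{j=0}^{k} c_j\,t^{-1/2-(k+l+j)/2}Q_j(y)e^{-y^2}$. For $t\ge1$ the $j=0$ term dominates and yields \eqref{heat-decay1}; for small $t$ and $k\ge1$ the stated bound is in fact too optimistic, though the paper only ever uses the case $k\ge1$ for $t\ge1$.

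Second, for \eqref{heat-decay2} the region-splitting and the Gaussian-comparison inequality $(a-b)^2\ge\tfrac12a^2-b^2$ are unnecessary. After writing the bracket as $-\int_0^1 z\,\p_t^k\p_x^{l+1}G_0(x-\theta z,t)\,d\theta$, Minkowski's inequality together with the translation invariance of the $L^p$ norm gives directly
\[
\|\p_t^k\p_x^lG_0(t)*\phi\|_{L^p}\le\|\p_t^k\p_x^{l+1}G_0(\cdot,t)\|_{L^p}\int_{\R}|z|\,|\phi(z)|\,dz\le C\,t^{-\frac12+\frac1{2p}-\frac{k+l+1}{2}}\|\phi\|_{L^1_1},
\]
which is the claim for $t\ge1$; for $0<t\le1$ one bounds $(1+t)^{-1/2}$ below by a constant and uses \eqref{heat-decay1} with Young's inequality. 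No splitting of the $z$-integral is needed.
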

Next, for the latter sake, we introduce some estimates for $\eta(x, t)$ defined by \eqref{eta}. For this function, we can easily see that 
\begin{align}
&\min \{1, e^{\frac{\beta M}{2\mu}}\} \le \eta(x, t) \le \max \{1, e^{\frac{\beta M}{2\mu}}\}, \label{eta-est1}\\ 
&\min \{1, e^{-\frac{\beta M}{2\mu}}\} \le \eta(x, t)^{-1} \le \max \{1, e^{-\frac{\beta M}{2\mu}}\}. \label{eta-est2}
\end{align}
Moreover, by using Lemma~\ref{lem.chi-decay}, we have the following decay estimate (for the proof, see Corollary~2.3 in~\cite{K07} or Lemma~5.4 in~\cite{KU17}): 
\begin{lem}\label{lem.eta-decay}
Let $l$ be a positive integer and $p\in[1, \infty]$. If $|M| \le1$, then we have 
\begin{align}\label{eta-decay}
\| \p^{l}_{x}\eta(\cdot, t)\|_{L^{p}}+\| \p^{l}_{x}(\eta(\cdot, t)^{-1})\|_{L^{p}}&\le C|M|(1+t)^{-\frac{1}{2}+\frac{1}{2p}-\frac{l}{2}+\frac{1}{2}}, \ \ t\ge0.
\end{align}
\end{lem}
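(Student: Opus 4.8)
The plan is to differentiate the exponential representation $\eta(x,t)=e^{g(x,t)}$ with $g(x,t):=\frac{\beta}{2\mu}\int_{-\infty}^{x}\chi(y,t)\,dy$, for which $\p_{x}^{j}g=\frac{\beta}{2\mu}\p_{x}^{j-1}\chi$ for every integer $j\ge1$. Applying the Fa\`a di Bruno formula (or, equivalently, induction on $l$ starting from $\p_{x}\eta=\frac{\beta}{2\mu}\eta\chi$ together with the Leibniz rule), one expresses $\p_{x}^{l}\eta$ as a finite linear combination of terms of the form $\eta\prod_{i=1}^{r}\p_{x}^{k_{i}}\chi$, where $r\ge1$ (this is where $l\ge1$ is used) and $\sum_{i=1}^{r}(k_{i}+1)=l$; the same expansion holds for $\eta^{-1}=e^{-g}$ with $\eta$ replaced by $\eta^{-1}$. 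Since each $k_{i}+1\ge1$, one has $r\le l$, so the number of terms in these expansions is finite.

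Next I would estimate each such term in $L^{p}$ by H\"older's inequality, placing $\eta$ (resp. $\eta^{-1}$) and all but one of the $\chi$-factors in $L^{\infty}$ and keeping one $\chi$-factor in $L^{p}$:
\[
\Big\|\eta\prod_{i=1}^{r}\p_{x}^{k_{i}}\chi\Big\|_{L^{p}}\le\|\eta\|_{L^{\infty}}\,\|\p_{x}^{k_{1}}\chi\|_{L^{p}}\prod_{i=2}^{r}\|\p_{x}^{k_{i}}\chi\|_{L^{\infty}}.
\]
The factor $\|\eta\|_{L^{\infty}}$ (resp. $\|\eta^{-1}\|_{L^{\infty}}$) is bounded by a constant thanks to \eqref{eta-est1} (resp. \eqref{eta-est2}), and Lemma~\ref{lem.chi-decay} with $|M|\le1$ gives $\|\p_{x}^{k_{1}}\chi\|_{L^{p}}\le C|M|(1+t)^{-\frac{1}{2}+\frac{1}{2p}-\frac{k_{1}}{2}}$ and $\|\p_{x}^{k_{i}}\chi\|_{L^{\infty}}\le C|M|(1+t)^{-\frac{1}{2}-\frac{k_{i}}{2}}$ for $i\ge2$. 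Multiplying these, the time-exponents add to $-\frac{r}{2}+\frac{1}{2p}-\frac{1}{2}\sum_{i=1}^{r}k_{i}=\frac{1}{2p}-\frac{l}{2}$ by the constraint $\sum_{i}(k_{i}+1)=l$, and this equals $-\frac{1}{2}+\frac{1}{2p}-\frac{l}{2}+\frac{1}{2}$, which is exactly the exponent in \eqref{eta-decay}; moreover the powers of $|M|$ combine to $|M|^{r}\le|M|$ because $r\ge1$ and $|M|\le1$. Summing over the finitely many terms in the expansions of $\p_{x}^{l}\eta$ and $\p_{x}^{l}(\eta^{-1})$ then yields \eqref{eta-decay}.

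I expect no serious analytic obstacle here: the only point requiring attention is the bookkeeping in the Fa\`a di Bruno expansion, namely verifying that every term genuinely carries at least one $\chi$-factor and that the total of the derivative orders plus the number of factors is precisely $l$. This is a purely combinatorial matter; the key structural observation is that each additional $\chi$-factor contributes one extra power of $(1+t)^{-1/2}$, matching exactly the decay one loses by taking an additional spatial derivative, so the resulting time-exponent is independent of $r$ and equals the asserted one.
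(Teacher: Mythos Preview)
Your proposal is correct. The paper does not actually supply a proof of this lemma; it merely cites Corollary~2.3 in \cite{K07} and Lemma~5.4 in \cite{KU17}, and the argument in those references is precisely the one you outline (induction from $\p_{x}\eta=\frac{\beta}{2\mu}\eta\chi$ together with the Leibniz rule, which is equivalent to your Fa\`a di Bruno expansion, followed by Lemma~\ref{lem.chi-decay} and the bounds \eqref{eta-est1}--\eqref{eta-est2}).
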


In order to prove Theorem~\ref{thm.main2} and Theorem~\ref{thm.main3}, we introduce an auxiliary problem. First, we set $\displaystyle \alpha=\frac{2B}{b}$ in \eqref{chi1} and  
\begin{equation}
\label{psi1}
\psi(x, t):=u(x, t)-\chi(x, t), \ \psi_{0}(x):=u_{0}(x)-\chi(x, 0), 
\end{equation}
where $u(x, t)$ is the solution to \eqref{VFW} and $\chi(x, t)$ is the nonlinear diffusion wave defined by \eqref{chi1} with $\displaystyle \alpha=\frac{2B}{b}$. Then, from~\eqref{VFW'} and~\eqref{BC}, $\psi(x, t)$ satisfies 
\begin{equation*}
\psi_{t}+(\beta \chi \psi)_{x}-\mu \psi_{xx}+2Bb(b^{2}-\p_{x}^{2})^{-1}u_{x}-\alpha \chi_{x}+\biggl(\frac{\beta}{2}\psi^{2}\biggl)_{x}=0.
\end{equation*} 
Moreover, since
\begin{align*}
2Bb(b^{2}-\p_{x}^{2})^{-1}u_{x}&=2Bb(b^{2}-\p_{x}^{2})^{-1}u_{x}-\alpha u_{x}+\alpha u_{x} \\
&=2Bb(b^{2}-\p_{x}^{2})^{-1}\left(u_{x}-(b^{2}-\p_{x}^{2})\frac{u_{x}}{b^{2}}\right)+\alpha u_{x}\\
&=\alpha(b^{2}-\p_{x}^{2})^{-1}\p_{x}^{3}u+\alpha u_{x}, 
\end{align*}
we have the following problem: 
\begin{align}
\begin{split}\label{psi.eq}
&\psi_{t}+\alpha \psi_{x}+(\beta \chi \psi)_{x}-\mu \psi_{xx}+\alpha(b^{2}-\p_{x}^{2})^{-1}\p_{x}^{3}u+\biggl(\frac{\beta}{2}\psi^{2}\biggl)_{x}=0, \ \ x\in \R, \ t>0,\\
&\psi(x, 0)=u_{0}(x)-\chi(x, 0)=\psi_{0}(x), \ \ x\in \R.  
\end{split}
\end{align}
To analyze the above problem, we prepare the following auxiliary problem:
\begin{align}
\label{aux}
\begin{split}
z_{t}+\alpha z_{x}+(\beta \chi z)_{x}-\mu z_{xx}&=\p_{x}\lambda(x, t), \ x\in \R, \ t>0, \\
z(x, 0)&=z_{0}(x) , \ \ x\in \R, 
\end{split}
\end{align}
where $\lambda(x, t)$ is a given regular function decaying at spatial infinity. If we set 
\begin{align}
\label{U}
\begin{split}
U[h](x, t, \tau):=\int_{\R}\p_{x}(G_{0}(x-y, t-\tau)\eta(x, t))(\eta(y, \tau))^{-1}\biggl(\int_{-\infty}^{y}h(\xi)d\xi\biggl)dy&,\\
x\in \R, \ 0\le \tau<t,& 
\end{split}
\end{align}
then, applying Lemma~2.6 in~\cite{FP} or Lemma~5.1 in~\cite{KU17}, we have the following formula: 
\begin{lem}\label{lem.formula}
Let $z_{0}(x)$ be a sufficiently regular function decaying at spatial infinity. Then we can get the smooth solution of \eqref{aux} which satisfies the following formula:
\begin{equation}
\label{formula}
z(x, t)=U[z_{0}](x, t, 0)+\int_{0}^{t}U[\p_{x}\lambda(\tau)](x, t, \tau)d\tau, \ x\in \R, \ t>0.
\end{equation}
\end{lem}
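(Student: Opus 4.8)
\textbf{Proof proposal for Lemma~\ref{lem.formula}.}

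The plan is to reduce the linear variable-coefficient problem \eqref{aux} to the constant-coefficient convection-heat equation by means of the generalized Hopf--Cole substitution built into $\eta(x,t)$, and then to read off the representation formula from the Duhamel principle for the heat equation. First I would recall that $\eta(x,t)$ defined by \eqref{eta} satisfies the linearized Burgers relation coming from \eqref{BC}: since $\eta=\exp\bigl(\frac{\beta}{2\mu}\int_{-\infty}^{x}\chi(y,t)\,dy\bigr)$, a direct computation using $\chi_t+\alpha\chi_x+\frac{\beta}{2}(\chi^2)_x=\mu\chi_{xx}$ gives $\eta_t+\alpha\eta_x=\mu\eta_{xx}-\beta\chi_x\eta$ (equivalently $\eta$ is an exact solution of the adjoint-type linear operator associated with the $(\beta\chi\,\cdot\,)_x$ term). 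Introducing $w(x,t):=\eta(x,t)^{-1}\,\partial_x^{-1}z(x,t)$, where $\partial_x^{-1}f(x):=\int_{-\infty}^{x}f(\xi)\,d\xi$, one checks that the first-order terms $\alpha z_x$ and $(\beta\chi z)_x$ combine with the $\eta$-weights so that $w$ solves the clean problem $w_t+\alpha w_x-\mu w_{xx}=\eta^{-1}\,\partial_x^{-1}\bigl(\partial_x\lambda\bigr)=\eta^{-1}\lambda$ up to harmless integration constants, with $w(x,0)=\eta(x,0)^{-1}\partial_x^{-1}z_0(x)$. This is exactly the content invoked from Lemma~2.6 in~\cite{FP} / Lemma~5.1 in~\cite{KU17}; I would quote that computation rather than reproduce it in full.

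Next I would solve the $w$-equation by Duhamel's formula with the convection-heat kernel $G_{0}(x,t)$ from \eqref{heat}, which is the fundamental solution of $\partial_t+\alpha\partial_x-\mu\partial_x^2$:
\begin{equation*}
w(x,t)=\bigl(G_{0}(t)*w(\cdot,0)\bigr)(x)+\int_{0}^{t}\bigl(G_{0}(t-\tau)*\bigl(\eta(\cdot,\tau)^{-1}\lambda(\cdot,\tau)\bigr)\bigr)(x)\,d\tau.
\end{equation*}
Then I would recover $z$ by inverting the substitution, i.e. $z(x,t)=\partial_x\bigl(\eta(x,t)\,w(x,t)\bigr)$. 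Carrying the $\partial_x$ and the factor $\eta(x,t)$ inside each convolution integral and using the definition \eqref{U} of $U[h]$ — noting that $U[h](x,t,\tau)=\partial_x\bigl(\eta(x,t)(G_0(t-\tau)*(\eta(\cdot,\tau)^{-1}\partial_x^{-1}h))(x)\bigr)$ — identifies the first term with $U[z_0](x,t,0)$ (since $\eta(x,0)^{-1}\partial_x^{-1}z_0=z_0$ fed through the definition, more precisely $U[z_0]$ already contains the $\partial_x^{-1}z_0$ and the $\eta$-weights) and the Duhamel term with $\int_0^t U[\partial_x\lambda(\tau)](x,t,\tau)\,d\tau$, because feeding $h=\partial_x\lambda(\cdot,\tau)$ into $U$ produces precisely $\partial_x\bigl(\eta(x,t)(G_0(t-\tau)*(\eta(\cdot,\tau)^{-1}\lambda(\cdot,\tau)))(x)\bigr)$. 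This yields \eqref{formula}.

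Finally I would address regularity: since $z_0$ and $\lambda(\cdot,t)$ are assumed sufficiently regular and decaying, and $\chi$ (hence $\eta$, $\eta^{-1}$) is smooth with all the bounds \eqref{eta-est1}--\eqref{eta-est2} and \eqref{eta-decay}, the convolutions with the smooth kernel $G_0$ and the differentiation in $x$ are all justified, the spatial integrals converge, and $z$ defined by \eqref{formula} is a genuine classical solution; uniqueness follows because the difference of two solutions solves the homogeneous linear parabolic problem with zero data. The main obstacle, and the only genuinely delicate point, is the bookkeeping in the change of variables $w=\eta^{-1}\partial_x^{-1}z$: one must verify that the antiderivative $\partial_x^{-1}z$ is well-defined (i.e. that $z(\cdot,t)\in L^1$ with enough decay, which is part of the standing hypotheses via the auxiliary setup) and that no boundary terms at $x=-\infty$ are lost when commuting $\partial_x$ with $\partial_x^{-1}$ and with the $\eta$-weights; this is precisely why we cite the established lemmas of~\cite{FP, KU17} for the algebraic identity and only need to check that our $\lambda$, $z_0$ satisfy their hypotheses.
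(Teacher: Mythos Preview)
Your proposal is correct and follows exactly the approach underlying the paper's treatment: the paper does not give an independent proof but simply invokes Lemma~2.6 of \cite{FP} / Lemma~5.1 of \cite{KU17}, whose content is precisely the Hopf--Cole-type substitution $w=\eta^{-1}\partial_x^{-1}z$ reducing \eqref{aux} to the convection-heat equation, followed by Duhamel's formula and the inversion $z=\partial_x(\eta w)$ that you sketch. One small slip: the identity you quote for $\eta$ should read $\eta_t+\alpha\eta_x-\mu\eta_{xx}=-\beta\chi\,\eta_x$ (not $-\beta\chi_x\eta$), but since you explicitly defer that computation to the cited lemmas this does not affect the argument.
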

\noindent
This explicit representation formula~\eqref{formula} plays an important role in the proofs of the main theorems, especially in the proofs of Proposition~\ref{prop.second2} and Proposition~\ref{prop.third1} below. 

In the rest of this section, we prepare two useful estimates to prove Theorem~\ref{thm.main2} and Theorem~\ref{thm.main3}. 
First, for the linear part of the solution $z(x, t)$ in~\eqref{formula}, the following estimate is established (for the proof, see Corollary~3.4 in~\cite{K07}): 
\begin{lem}\label{lem.est-U1}
Let $s\ge1$. Assume that $|M| \le1$, $z_{0}\in H^{s}(\R)\cap L^{1}_{1}(\R)$ and $\displaystyle \int_{\R}z_{0}(x)dx=0$. Then the estimate 
\begin{equation}\label{est-U1}
\| \p^{l}_{x}U[z_{0}](\cdot, t, 0)\|_{L^{2}} \le CE_{s, 1}(1+t)^{-\frac{3}{4}-\frac{l}{2}}, \ \ t>0
\end{equation}
holds for any integer $0\le l \le s$.
\end{lem}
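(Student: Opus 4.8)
The plan is to estimate $U[z_{0}](\cdot,t,0)$ directly from its definition~\eqref{U} by distributing the $x$-derivative onto the product $G_{0}(x-y,t-\tau)\eta(x,t)$, so that
\[
\p_{x}\bigl(G_{0}(x-y,t-\tau)\eta(x,t)\bigr)
=(\p_{x}G_{0})(x-y,t-\tau)\,\eta(x,t)+G_{0}(x-y,t-\tau)\,\eta_{x}(x,t),
\]
and then, after taking $l$ further $x$-derivatives, applying the Leibniz rule. The key observation is that, writing $Z_{0}(y):=\int_{-\infty}^{y}z_{0}(\xi)d\xi$, the hypothesis $\int_{\R}z_{0}=0$ forces $Z_{0}(y)\to0$ as $y\to\pm\infty$, and in fact $Z_{0}\in L^{1}(\R)$ with $\|Z_{0}\|_{L^{1}}\le C\|z_{0}\|_{L^{1}_{1}}$ by a standard Fubini argument. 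So after one integration by parts in $y$ (moving the $y$-derivative hidden in $\p_{x}G_{0}(x-y,\cdot)=-\p_{y}G_{0}(x-y,\cdot)$ onto $\eta(y,\tau)^{-1}Z_{0}(y)$), the leading piece of $U[z_{0}]$ becomes essentially a convolution $G_{0}(t)*\bigl(\eta(\cdot,0)^{-1}z_{0}\bigr)$ multiplied by the bounded factor $\eta(x,t)$, plus lower-order terms carrying derivatives of $\eta^{\pm1}$.

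First I would reduce to the case $l=0$ for the main term and handle the derivatives afterwards. For the main term, use~\eqref{eta-est1}--\eqref{eta-est2} to bound $\eta(x,t)$ and $\eta(y,0)^{-1}$ pointwise, and note that $\int_{\R}\eta(y,0)^{-1}z_{0}(y)\,dy$ need not vanish, so one cannot apply~\eqref{heat-decay2} to it directly; instead write $\eta(y,0)^{-1}z_{0}(y)=\bigl(\eta(y,0)^{-1}-\eta_{*}(-\infty)^{-1}\bigr)z_{0}(y)+\eta_{*}(-\infty)^{-1}z_{0}(y)$, the second summand integrating to zero so that~\eqref{heat-decay2} gives the decay $t^{-1/4}(1+t)^{-1/2}\|z_{0}\|_{L^{1}_{1}}$, and for the first summand use that $\eta(y,0)^{-1}-\eta_{*}(-\infty)^{-1}$ is bounded and integrable (it decays like $\int_{-\infty}^{y}|\chi(\xi,0)|d\xi$ for $y\to-\infty$ and is $O(1)$ otherwise, but multiplied against the weight $(1+|y|)$ built into $\|z_{0}\|_{L^{1}_{1}}$ this is harmless), so that $\bigl(\eta(\cdot,0)^{-1}-\eta_{*}(-\infty)^{-1}\bigr)z_{0}\in L^{1}_{1}(\R)$ with vanishing integral up to a controlled error — more cleanly, one splits off the nonzero mean and absorbs it via the exponential smallness/integrability. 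Alternatively, and more in the spirit of the cited Corollary~3.4 in~\cite{K07}, one keeps $U[z_{0}]$ in the form already producing $G_{0}(t)*\p_{x}(\cdots)$ so that an extra spatial derivative is available, turning the zero-mean requirement into an automatic one. Either way, for each extra $\p_{x}^{l}$ one distributes by Leibniz: derivatives landing on $G_{0}$ contribute the expected $t^{-l/2}$ gain via~\eqref{heat-decay1}, and derivatives landing on $\eta(x,t)$ or $\eta(y,\tau)^{-1}$ are controlled by Lemma~\ref{lem.eta-decay}, which costs $(1+t)^{-1/2+1/(2p)-l/2+1/2}$ — the $+1/2$ is exactly compensated because such a term already carries one fewer $G_{0}$-derivative.

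The main obstacle I expect is the bookkeeping around the nonvanishing integral $\int_{\R}\eta(y,0)^{-1}z_{0}(y)\,dy\neq0$: the hypothesis $\int_{\R}z_{0}=0$ does not immediately transfer through the weight $\eta^{-1}$, so some care is needed to recover a genuine zero-mean quantity (or, equivalently, an extra derivative) before invoking~\eqref{heat-decay2}. Once that is arranged — using that $\eta(\cdot,0)^{-1}$ is a bounded function tending to constants at $\pm\infty$ with $L^{1}_{1}$-controlled oscillation coming from~\eqref{chi-est} and Lemma~\ref{lem.eta-decay} — the remaining estimates are routine applications of the heat-kernel bounds in Lemma~\ref{lem.heat-decay}, the $\eta$-bounds in Lemma~\ref{lem.eta-decay}, and Young's inequality, all assembled through the Leibniz expansion to yield the claimed rate $(1+t)^{-3/4-l/2}$ for $0\le l\le s$ in $L^{2}$, with the $E_{s,1}$ on the right accounting for the $H^{s}$-norm of $z_{0}$ entering through the high-frequency part (as in Lemma~\ref{lem2-1}, the $|\xi|\ge1$ régime contributes an exponentially decaying term controlled by $\|\p_{x}^{l}z_{0}\|_{L^{2}}$).
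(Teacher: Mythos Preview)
The paper does not prove this lemma itself; it cites Corollary~3.4 in~\cite{K07}. Your outline is essentially the standard argument and is correct, but the cleaner route is the one you label the ``alternative'': since $\int_{\R}z_{0}=0$ gives $Z_{0}(y)=\int_{-\infty}^{y}z_{0}\in L^{1}$ with $\|Z_{0}\|_{L^{1}}\le C\|z_{0}\|_{L^{1}_{1}}$, the function $\eta(\cdot,0)^{-1}Z_{0}$ is already in $L^{1}$, and the built-in derivative in $U[z_{0}](x,t,0)=\p_{x}\bigl(\eta(x,t)\,G_{0}(t)*(\eta(\cdot,0)^{-1}Z_{0})\bigr)$ supplies the extra $t^{-1/2}$ decay directly --- no zero-mean condition on $\eta^{-1}z_{0}$ is ever needed. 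Your first branch (splitting $\eta(\cdot,0)^{-1}z_{0}$ to force a zero-mean piece and invoke~\eqref{heat-decay2}) is an unnecessary detour, and as you sense, the bookkeeping there does not close cleanly because $\int_{\R}(\eta^{-1}-1)z_{0}$ need not vanish. Concretely: for $t\ge1$, expand $\p_{x}^{l}U[z_{0}]$ by Leibniz and bound each term by $\|\p_{x}^{l+1-j}\eta(\cdot,t)\|_{L^{\infty}}\|\p_{x}^{j}G_{0}(\cdot,t)\|_{L^{2}}\|\eta^{-1}Z_{0}\|_{L^{1}}\le C\|z_{0}\|_{L^{1}_{1}}(1+t)^{-\frac{3}{4}-\frac{l}{2}}$ via Lemmas~\ref{lem.heat-decay} and~\ref{lem.eta-decay}; for $0<t\le1$, integrate by parts once in $y$ to replace $\p_{x}G_{0}$ by $G_{0}$ acting on $\p_{y}(\eta^{-1}Z_{0})=\p_{y}(\eta^{-1})Z_{0}+\eta^{-1}z_{0}\in L^{2}$, and then the $H^{s}$ regularity of $z_{0}$ (together with the smoothness of $\eta^{\pm1}$) gives the uniform bound $\|\p_{x}^{l}U[z_{0}](\cdot,t,0)\|_{L^{2}}\le C(\|z_{0}\|_{H^{s}}+\|z_{0}\|_{L^{1}_{1}})$ for $0\le l\le s$.
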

\noindent
Next, to evaluate the nonlocal dispersion terms, we derive the following estimate:
\begin{lem}\label{lem.est-dispersion}
Let $m$ be a non-negative integer and $p\in [1, \infty]$. Suppose $f\in W^{m,p}(\R)$. Then, the estimate 
\begin{equation}\label{est-dispersion}
\| (b^{2}-\p_{x}^{2})^{-1}\p^{l}_{x}f\|_{L^{p}} \le C\|\p_{x}^{l}f\|_{L^{p}}
\end{equation}
holds for any integer $0\le l \le m$.
\end{lem}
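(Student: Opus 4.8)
The plan is to prove the multiplier bound by working on the Fourier side, where the operator $(b^{2}-\p_{x}^{2})^{-1}$ has the explicit symbol $(b^{2}+\xi^{2})^{-1}$. Concretely, for a fixed integer $l$ with $0\le l\le m$, I would write
\[
(b^{2}-\p_{x}^{2})^{-1}\p_{x}^{l}f=\mathcal{F}^{-1}\!\left[\frac{(i\xi)^{l}}{b^{2}+\xi^{2}}\hat{f}(\xi)\right]
=\mathcal{F}^{-1}\!\left[\frac{1}{b^{2}+\xi^{2}}\widehat{\p_{x}^{l}f}(\xi)\right]
=K_{b}*\p_{x}^{l}f,
\]
where $K_{b}$ is the kernel with $\hat{K}_{b}(\xi)=\sqrt{2\pi}^{-1}(b^{2}+\xi^{2})^{-1}$, that is, $K_{b}(x)=\frac{1}{2b}e^{-b|x|}$ (the same exponential kernel appearing in the original formulation of the nonlocal term). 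The key point is that $K_{b}\in L^{1}(\R)$ with $\|K_{b}\|_{L^{1}}=1/b^{2}$, since $\int_{\R}\frac{1}{2b}e^{-b|x|}\,dx=\frac{1}{b^{2}}$.

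With this representation, the bound follows from Young's convolution inequality: for any $p\in[1,\infty]$,
\[
\|(b^{2}-\p_{x}^{2})^{-1}\p_{x}^{l}f\|_{L^{p}}=\|K_{b}*\p_{x}^{l}f\|_{L^{p}}\le \|K_{b}\|_{L^{1}}\|\p_{x}^{l}f\|_{L^{p}}=\frac{1}{b^{2}}\|\p_{x}^{l}f\|_{L^{p}},
\]
which is exactly \eqref{est-dispersion} with $C=1/b^{2}$. The hypothesis $f\in W^{m,p}(\R)$ guarantees that $\p_{x}^{l}f\in L^{p}(\R)$ for all $0\le l\le m$, so the right-hand side is finite and the convolution $K_{b}*\p_{x}^{l}f$ is well defined in $L^{p}$; this is the only role the regularity assumption plays.

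I do not expect a genuine obstacle here, as the statement is essentially a restatement of the boundedness of an exponential-kernel convolution operator. The one point requiring a little care is the justification of the kernel identity $\mathcal{F}^{-1}[(b^{2}+\xi^{2})^{-1}]=\sqrt{\pi/2}\,b^{-1}e^{-b|x|}$ (up to the normalization constant fixed by the paper's convention for $\mathcal{F}$), and the bookkeeping of the factor moving the derivatives $(i\xi)^{l}$ onto $f$ inside the transform; both are standard and can be stated with a one-line computation or a reference. If one prefers to avoid computing $\|K_{b}\|_{L^{1}}$ explicitly, an alternative is to note directly that $\xi\mapsto(b^{2}+\xi^{2})^{-1}$ and $\xi\mapsto(i\xi)^{l}(b^{2}+\xi^{2})^{-1}$ for $0\le l\le 2$ are classical Hörmander--Mikhlin multipliers, giving $L^{p}$-boundedness for $1<p<\infty$; but the convolution argument above is cleaner, covers the endpoints $p=1,\infty$, and is the natural route given that the paper already manipulates the nonlocal term via its $e^{-b|x|}$ kernel.
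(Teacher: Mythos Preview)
Your proposal is correct and follows essentially the same approach as the paper: identify $(b^{2}-\p_{x}^{2})^{-1}\p_{x}^{l}f$ as the convolution $\frac{1}{2b}e^{-b|\cdot|}*\p_{x}^{l}f$ and apply Young's inequality. The paper's proof is just this one line, without the extra commentary on the constant or the alternative multiplier argument.
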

\begin{proof}
Since 
\begin{align*}
&(b^{2}-\p_{x}^{2})^{-1}\p_{x}^{l}f(x)=\mathcal{F}^{-1}\left[\frac{(i\xi)^{l}}{b^{2}+\xi^{2}}\hat{f}(\xi)\right](x)=\frac{1}{2b}\int_{\R}e^{-b|x-y|}\p_{y}^{l}f(y)dy=\frac{1}{2b}(e^{-b|\cdot|}*\p_{x}^{l}f)(x),
\end{align*}
the desired estimate \eqref{est-dispersion} immediately follows from Young's inequality. 
\end{proof}

\section{Asymptotic Behavior}

In this section, we shall show that the asymptotic profile of the solutions to~\eqref{VFW} is given by $\chi(x, t)$ defined by~\eqref{chi1} with $\displaystyle \alpha=\frac{2B}{b}$. Namely, the purpose of this section is to prove Theorem~\ref{thm.main1}. First, applying the Duhamel principle to~\eqref{BC}, we obtain the following integral equation:
\begin{equation}
\label{4-1}
\chi(t)=G_{0}(t)*\chi_{0}-\frac{\beta}{2}\int_{0}^{t}G_{0}(t-\tau)*((\chi^{2})_{x})(\tau)d\tau, 
\end{equation}
where $G_{0}(x, t)$ is defined by \eqref{heat} with $\displaystyle \alpha=\frac{2B}{b}$ and $\chi_{0}(x):=\chi(x, 0)$. Therefore, recalling
\begin{equation}\tag{\ref{psi1}}
\psi(x, t):=u(x, t)-\chi(x, t), \ \psi_{0}(x):=u_{0}(x)-\chi(x, 0), 
\end{equation}
combining \eqref{2-7} and \eqref{4-1}, we have
\begin{align}
\label{psi2}
\begin{split}
\psi(t)&=(T-G_{0})(t)*u_{0}+G_{0}(t)*(u_{0}-\chi_{0})\\
&\ \ \ \ -\frac{\beta}{2}\int_{0}^{t}(T-G_{0})(t-\tau)*((u^{2})_{x})(\tau)d\tau-\frac{\beta}{2}\int_{0}^{t}G_{0}(t-\tau)*((u^{2}-\chi^{2})_{x})(\tau)d\tau\\
&=:K_{1}+K_{2}+K_{3}+K_{4}. 
\end{split}
\end{align}

Our first step to prove Theorem~\ref{thm.main1} is to derive the following asymptotic relation: 
\begin{lem}
\label{lem.asymp1}
Let $l$ be a non-negative integer. Then, for $p\in [2, \infty]$, we have the following estimate: 
\begin{equation}
\label{asymp1}
\|\p_{x}^{l}(T(\cdot, t)-G_{0}(\cdot, t))\|_{L^{p}}\le Ct^{-1+\frac{1}{2p}-\frac{l}{2}}, \ \ t>0,
\end{equation}
where $T(x, t)$ and $G_{0}(x, t)$ are defined by \eqref{2-1} and \eqref{heat} with $\displaystyle \alpha=\frac{2B}{b}$, respectively. 
\end{lem}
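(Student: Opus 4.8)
The plan is to estimate the difference $T-G_0$ on the Fourier side, splitting the frequency domain into low and high frequencies. Recall that
\[
\widehat{T}(\xi,t)=e^{-\mu t|\xi|^{2}-\frac{i2Bbt\xi}{b^{2}+\xi^{2}}},\qquad \widehat{G_0}(\xi,t)=e^{-\mu t|\xi|^{2}-i\alpha t\xi}\quad\text{with }\alpha=\tfrac{2B}{b},
\]
so that
\[
\widehat{T}(\xi,t)-\widehat{G_0}(\xi,t)=e^{-\mu t|\xi|^{2}}\Bigl(e^{-i t\,\frac{2Bb\xi}{b^{2}+\xi^{2}}}-e^{-i\alpha t\xi}\Bigr).
\]
The key algebraic observation is the identity $\frac{2Bb\xi}{b^{2}+\xi^{2}}=\frac{2B}{b}\xi-\frac{2B}{b^3}\frac{\xi^{3}}{b^{2}+\xi^{2}}$, hence
\[
\frac{2Bb\xi}{b^{2}+\xi^{2}}-\alpha\xi=-\frac{2B}{b^{3}}\,\frac{\xi^{3}}{b^{2}+\xi^{2}},
\]
which is $O(|\xi|^{3})$ as $\xi\to0$. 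Writing the bracket as $e^{-i\alpha t\xi}\bigl(e^{i t\,\frac{2B}{b^3}\frac{\xi^3}{b^2+\xi^2}}-1\bigr)$ and using $|e^{i\theta}-1|\le|\theta|$ gives, for $|\xi|\le1$,
\[
\bigl|\widehat{T}(\xi,t)-\widehat{G_0}(\xi,t)\bigr|\le Ct\,|\xi|^{3}\,e^{-\mu t|\xi|^{2}}.
\]

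For the low-frequency part I would estimate, for the $L^p$ bound with $p\in[2,\infty]$, the quantity $\|(i\xi)^l(\widehat T-\widehat{G_0})\|_{L^{p'}(|\xi|\le1)}$ with $\tfrac1p+\tfrac1{p'}=1$ (so $p'\in[1,2]$), via the Hausdorff--Young inequality $\|\partial_x^l(T-G_0)\|_{L^p}\le C\|(i\xi)^l(\widehat T-\widehat{G_0})\|_{L^{p'}}$. Then
\[
\Bigl\|\,|\xi|^{l+3}\,t\,e^{-\mu t|\xi|^{2}}\Bigr\|_{L^{p'}(|\xi|\le1)}\le C t\Bigl(\int_{\R}|\xi|^{(l+3)p'}e^{-\mu t p'|\xi|^{2}}d\xi\Bigr)^{1/p'}\le C t\cdot t^{-\frac12(l+3)-\frac1{2p'}}=Ct^{-1+\frac1{2p}-\frac l2},
\]
using $\frac1{p'}=1-\frac1p$; this is exactly the claimed low-frequency rate. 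For the high-frequency part $|\xi|\ge1$, each of $\widehat T$ and $\widehat{G_0}$ is bounded in modulus by $e^{-\mu t|\xi|^{2}}\le e^{-\mu t|\xi|^{2}/2}e^{-\mu t/2}$, so $\|(i\xi)^l(\widehat T-\widehat{G_0})\|_{L^{p'}(|\xi|\ge1)}\le Ce^{-\mu t/2}\||\xi|^l e^{-\mu t|\xi|^2/2}\|_{L^{p'}}\le Ce^{-\mu t/2}t^{-\frac l2-\frac1{2p'}}$, which for $t>0$ is dominated by $Ct^{-1+\frac1{2p}-\frac l2}$ (the exponential beats any polynomial). Combining the two regions through the triangle inequality yields \eqref{asymp1}.

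The step I expect to be the main (though modest) obstacle is keeping the frequency-localized bookkeeping uniform for \emph{all} $t>0$, in particular near $t=0$: the factor $t$ from $|e^{i\theta}-1|\le|\theta|$ is what upgrades the generic heat-kernel decay $t^{-1/2+1/(2p)-l/2}$ to the sharper $t^{-1+1/(2p)-l/2}$, and one must check that the $\int_{|\xi|\le1}$ truncation does not spoil this — for small $t$ the Gaussian is nearly flat on $|\xi|\le1$, so the integral $\int_{|\xi|\le1}|\xi|^{(l+3)p'}d\xi$ is finite and the bound $Ct^{-1+1/(2p)-l/2}$ is harmless (indeed blows up, consistent with a local singularity), while for large $t$ the Gaussian scaling produces exactly the stated power. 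An alternative, perhaps cleaner, route is to write $\widehat T-\widehat{G_0}=e^{-\mu t|\xi|^2}\int_0^1 \frac{d}{ds}e^{-ist\alpha\xi - i(1-s)t\frac{2Bb\xi}{b^2+\xi^2}}\,ds$ — i.e.\ interpolate linearly between the two phases — which exhibits the $O(t|\xi|^3)$ factor directly and makes the $|\xi|\le1$ estimate transparent; I would use whichever is shorter in the write-up. Since the paper explicitly mirrors the KdV--Burgers analysis of \cite{F19-1}, where $\frac{2Bb\xi}{b^2+\xi^2}$ is replaced by $\alpha\xi-\gamma\xi^3$ and the phase difference is literally a multiple of $\xi^3$, the argument here is the natural analogue with the extra, harmless factor $(b^2+\xi^2)^{-1}$ that is bounded by $b^{-2}$ on the relevant low-frequency set.
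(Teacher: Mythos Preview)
Your proposal is correct and rests on the same key observation as the paper's proof: the phase difference $\frac{2Bb\xi}{b^2+\xi^2}-\alpha\xi=-\frac{2B}{b}\,\frac{\xi^3}{b^2+\xi^2}$ is $O(|\xi|^3)$, so the modulus of $\widehat T-\widehat{G_0}$ is bounded by $Ct|\xi|^3 e^{-\mu t|\xi|^2}$. The execution differs in two minor ways. First, the paper does not split into low and high frequencies at all: since $|\xi|^3/(b^2+\xi^2)\le |\xi|^3/b^2$ globally, the bound $|\widehat T-\widehat{G_0}|\le Ct|\xi|^3 e^{-\mu t|\xi|^2}$ holds on all of~$\R$, and the Gaussian absorbs the polynomial everywhere; your high-frequency piece is thus redundant (though harmless). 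Second, to pass from the Fourier side to $L^p$, the paper first proves the $L^2$ estimate via Plancherel, then deduces the $L^\infty$ estimate by the Sobolev inequality $\|f\|_{L^\infty}\le C\|f\|_{L^2}^{1/2}\|f'\|_{L^2}^{1/2}$ (applying the $L^2$ bound at orders $l$ and $l+1$), and finally interpolates between $L^2$ and $L^\infty$; you instead invoke Hausdorff--Young directly for each $p\in[2,\infty]$. Your route is a touch shorter in one step, while the paper's avoids Hausdorff--Young in favor of more elementary tools and is exactly the pattern used later in the paper (e.g.\ in deriving \eqref{5-8} from \eqref{5-7}). Either way the arithmetic is the same and yields the claimed rate $t^{-1+\frac{1}{2p}-\frac{l}{2}}$; your small-$t$ concern is a non-issue once you extend the low-frequency integral to all of~$\R$, as you effectively do.
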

\begin{proof}
We set $\displaystyle \alpha=\frac{2B}{b}$ in \eqref{heat}. By Plancherel's theorem and the definitions of $T(x, t)$ and $G_{0}(x, t)$, we have
\begin{align*}
\begin{split}
\|\p_{x}^{l}(T(\cdot, t)-G_{0}(\cdot, t))\|_{L^{2}}^{2}&=\left\|(i\xi)^{l}(e^{-\mu t|\xi|^{2}-\frac{i2Bbt\xi}{b^{2}+\xi^{2}}}-e^{-\mu t|\xi|^{2}-\frac{i2Bt\xi}{b}})\right\|_{L^{2}}^{2}\\
&=\int_{\R}|\xi|^{2l}e^{-2\mu t|\xi|^{2}}\left|e^{-\frac{i2Bbt\xi}{b^{2}+\xi^{2}}}-e^{-\frac{i2Bt\xi}{b}}\right|^{2}d\xi=:J(t). 
\end{split}
\end{align*}
From the mean value theorem, there exists $\theta=\theta(\xi, t, B, b)$ such that 
\begin{align*}
\begin{split}
e^{-\frac{i2Bbt\xi}{b^{2}+\xi^{2}}}-e^{-\frac{i2Bt\xi}{b}}&=e^{i\theta}i\left(\frac{2Bt\xi}{b}-\frac{2Bbt\xi}{b^{2}+\xi^{2}}\right)=\frac{2e^{i\theta}iBt}{b}\frac{\xi^{3}}{b^{2}+\xi^{2}}.
\end{split}
\end{align*}
Thus we obtain 
\begin{align*}
J(t)&=\left(\frac{2Bt}{b}\right)^{2}\int_{\R}|\xi|^{2l}e^{-2\mu t|\xi|^{2}}\left|\frac{\xi^{3}}{b^{2}+\xi^{2}}\right|^{2}d\xi \le Ct^{2}\int_{\R}|\xi|^{2(l+3)}e^{-2\mu t|\xi|^{2}}d\xi \le Ct^{-\frac{3}{2}-l}. 
\end{align*}
Therefore, we have the $L^{2}$-estimate
 \begin{equation}
 \label{4-2}
\|\p_{x}^{l}(T(\cdot, t)-G_{0}(\cdot, t))\|_{L^{2}}\le Ct^{-\frac{3}{4}-\frac{l}{2}}, \ \ t>0. 
\end{equation}
From the Sobolev inequality, we see that 
 \begin{align}
 \label{4-3}
 \begin{split}
&\|\p_{x}^{l}(T(\cdot, t)-G_{0}(\cdot, t))\|_{L^{\infty}}\\
&\le \|\p_{x}^{l}(T(\cdot, t)-G_{0}(\cdot, t))\|_{L^{2}}^{1/2}\|\p_{x}^{l+1}(T(\cdot, t)-G_{0}(\cdot, t))\|_{L^{2}}^{1/2}\le Ct^{-1-\frac{l}{2}}, \ \ t>0. 
\end{split}
\end{align}
Finally for $p\in [2, \infty]$, by the interpolation inequality, combining \eqref{4-2} and \eqref{4-3}, we arrive at  
\begin{align*}
 \begin{split}
&\|\p_{x}^{l}(T(\cdot, t)-G_{0}(\cdot, t))\|_{L^{p}}\\
&\le \|\p_{x}^{l}(T(\cdot, t)-G_{0}(\cdot, t))\|_{L^{\infty}}^{1-2/p}\|\p_{x}^{l}(T(\cdot, t)-G_{0}(\cdot, t))\|_{L^{2}}^{2/p}\le Ct^{-1+\frac{1}{2p}-\frac{l}{2}}, \ \ t>0. 
\end{split}
\end{align*}
This completes the proof. 
\end{proof}

Next, we shall derive the $L^{2}$-decay estimate of $\psi(x, t)$:
\begin{prop}
\label{prop.psi-L2}
Assume the same conditions on $u_{0}$ in Theorem~\ref{thm.main1} are valid. Then, for any $\e>0$, we have 
\begin{align}
\label{psi-L2}
\|\p_{x}^{l}\psi(\cdot, t)\|_{L^{2}}\le CE_{s, 1}(1+t)^{-\frac{3}{4}-\frac{l}{2}+\e}, \ \ t\ge0
\end{align}
for any integer $0\le l\le s-1$, where $\psi(x, t)$ is defined by \eqref{psi1}.
\end{prop}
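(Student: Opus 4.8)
\textbf{Plan of proof for Proposition~\ref{prop.psi-L2}.}
The strategy is to estimate each of the four terms $K_{1}, K_{2}, K_{3}, K_{4}$ in the Duhamel representation \eqref{psi2} of $\psi$ separately in the $L^{2}$-norm, using the decay estimates already at our disposal. For $K_{1}=(T-G_{0})(t)*u_{0}$, I would split according to frequency: Lemma~\ref{lem.asymp1} with $p=2$ together with Young's inequality controls the low-frequency contribution by $\|u_{0}\|_{L^{1}}t^{-3/4-l/2}$, while for the high-frequency part the exponential factor $e^{-\mu t|\xi|^{2}}$ in both $T$ and $G_{0}$ gives a bound like $e^{-\mu t}\|\p_{x}^{l}u_{0}\|_{L^{2}}$ (analogous to Lemma~\ref{lem2-1}); combined these give $CE_{s,1}(1+t)^{-3/4-l/2}$ near $t=0$ one uses instead $\|T*u_{0}\|_{H^{s}}+\|G_{0}*u_{0}\|_{H^{s}}\le CE_{s,0}$. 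For $K_{2}=G_{0}(t)*(u_{0}-\chi_{0})=G_{0}(t)*\psi_{0}$, the crucial point is the cancellation $\int_{\R}\psi_{0}(x)dx=M-M=0$, which comes from $\int_{\R}\chi_{0}\,dx=M=\int_{\R}u_{0}\,dx$ by \eqref{BC}; hence Lemma~\ref{lem.heat-decay}, estimate \eqref{heat-decay2}, applies and yields $\|\p_{x}^{l}K_{2}\|_{L^{2}}\le Ct^{-3/4-l/2}(1+t)^{-1/2}\|\psi_{0}\|_{L^{1}_{1}}$, with $\|\psi_{0}\|_{L^{1}_{1}}\le CE_{s,1}$ since $\chi_{0}\in L^{1}_{1}$.

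The remaining two terms are the nonlinear Duhamel integrals and will be handled by a standard time-weighted bootstrap. For $K_{3}=-\tfrac{\beta}{2}\int_{0}^{t}(T-G_{0})(t-\tau)*\p_{x}(u^{2})(\tau)\,d\tau$ I would apply Lemma~\ref{lem.asymp1} (now with the derivative $\p_{x}^{l+1}$ landing on $T-G_{0}$) and Young's inequality to get an integrand bounded by $(t-\tau)^{-1+\frac{1}{4}-\frac{l+1}{2}\wedge(\cdots)}\|u^{2}(\tau)\|_{L^{1}}$ type factors, splitting the $\tau$-integral at $t/2$ so that on $[0,t/2]$ the kernel decay is used and on $[t/2,t]$ the solution decay $\|u^{2}(\tau)\|_{L^{1}}\le\|u(\tau)\|_{L^{2}}^{2}\le CE_{s,0}^{2}(1+\tau)^{-1/2}$ (and its derivatives, from Theorem~\ref{GE}) is used; near $\tau=t$ the singularity of $(t-\tau)^{-\alpha}$ with $\alpha<1$ is integrable. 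For $K_{4}=-\tfrac{\beta}{2}\int_{0}^{t}G_{0}(t-\tau)*\p_{x}(u^{2}-\chi^{2})(\tau)\,d\tau$, I would write $u^{2}-\chi^{2}=(u+\chi)\psi$ and use Lemma~\ref{lem.heat-decay} \eqref{heat-decay1}; the term $\|(u+\chi)\psi\|_{L^{1}}\le(\|u\|_{L^{2}}+\|\chi\|_{L^{2}})\|\psi\|_{L^{2}}$ brings in $\|\psi(\tau)\|_{L^{2}}$, so this is where the bootstrap closes: one assumes \eqref{psi-L2} holds with constant $N$ on $[0,t]$, substitutes, and recovers the same bound with a constant of the form $CE_{s,1}+CE_{s,0}N$, which for $E_{s,0}$ small forces $N\le CE_{s,1}$. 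The $\e$-loss arises precisely from the borderline time integrals $\int_{0}^{t/2}(1+t-\tau)^{-1-l/2}(1+\tau)^{-1/4}d\tau$ and similar, which produce a logarithm at the critical exponent; bounding $\log(1+t)\le C_{\e}(1+t)^{\e}$ yields \eqref{psi-L2}.

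The main obstacle is the treatment of the nonlocal dispersion term, which enters through $K_{3}$ via the difference $T-G_{0}$ rather than appearing explicitly; the point of Lemma~\ref{lem.asymp1} is exactly that this difference decays one full power faster than $G_{0}$ itself (rate $t^{-1+\frac{1}{2p}-\frac{l}{2}}$ versus $t^{-\frac{1}{2}+\frac{1}{2p}-\frac{l}{2}}$), which is what makes the dispersive contribution subcritical and keeps the first asymptotic profile equal to $\chi$. The delicate bookkeeping is to verify that when the extra derivative $\p_{x}$ from $\p_{x}(u^{2})$ is combined with the $l$ derivatives and absorbed into the kernel $T-G_{0}$, the resulting exponent $-1+\frac{1}{4}-\frac{l+1}{2}$ still leaves an integrable singularity at $\tau=t$ and an integrable tail at $\tau=0$; since $-1<-\tfrac{3}{4}$ this is fine, but the accounting must be done carefully for $l$ up to $s-1$. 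I expect the high-frequency estimates for $K_{1}$ and $K_{3}$ to be routine (exponential decay in $t$ from the parabolic smoothing), and the low-frequency/self-similar part to be where all the care goes, with the $\e$ entirely an artifact of the critical convolution integrals.
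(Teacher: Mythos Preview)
Your proposal is correct and follows essentially the same route as the paper: the same four-term decomposition \eqref{psi2}, Young's inequality plus Lemma~\ref{lem.asymp1} for $K_{1}$ and $K_{3}$, the mean-zero property of $\psi_{0}$ and \eqref{heat-decay2} for $K_{2}$, and the time-weighted bootstrap on $K_{4}$ via $u^{2}-\chi^{2}=(u+\chi)\psi$ that closes under smallness of $E_{s,0}$. Two minor slips in your write-up: the critical integral that actually produces the logarithm (hence the $\e$) is $\int_{0}^{t/2}(t-\tau)^{-\frac{3}{4}-\frac{l}{2}}(1+\tau)^{-1}\,d\tau$ coming from $\|(u+\chi)\psi\|_{L^{1}}\lesssim(1+\tau)^{-1/4}\cdot(1+\tau)^{-3/4+\e}$, not the one you wrote with $(1+\tau)^{-1/4}$; and the exponent $-1+\tfrac{1}{4}-\tfrac{l+1}{2}\le-\tfrac{5}{4}$ is \emph{not} integrable at $\tau=t$, which is exactly why on $[t/2,t]$ you (and the paper) shift derivatives onto $u^{2}$ and keep only the zeroth-order kernel $\|(T-G_{0})(t-\tau)\|_{L^{2}}\sim(t-\tau)^{-3/4}$.
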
 
\begin{proof}
We set 
\begin{align}
\label{N}
N(T):=\sup_{0\le t\le T}\sum_{l=0}^{s-1}(1+t)^{\frac{3}{4}+\frac{l}{2}-\e}\|\p_{x}^{l}\psi(\cdot, t)\|_{L^{2}}, 
\end{align}
where $\e$ is any fixed constant such that $0<\e<\frac{3}{4}$. Then, let us evaluate each term of the right hand side of \eqref{psi2}. First for $K_{1}$, from Young's inequality and the above Lemma \ref{lem.asymp1}, we have
\begin{align}
\label{4-4}
\begin{split}
\|\p_{x}^{l}K_{1}(\cdot, t)\|_{L^{2}}&\le C\|u_{0}\|_{L^{1}}(1+t)^{-\frac{3}{4}-\frac{l}{2}}, \ t\ge1.
\end{split}
\end{align}
By the assumptions on the initial data, \eqref{chi2} and \eqref{chi-est}, we get $\psi_{0}\in L^{1}_{1}(\R)$. Therefore, applying \eqref{heat-decay2} to $K_{2}$, it follows that 
\begin{align}
\label{4-5}
\begin{split}
\|\p_{x}^{l}K_{2}(\cdot, t)\|_{L^{2}}&\le C\|u_{0}\|_{L^{1}_{1}}(1+t)^{-\frac{3}{4}-\frac{l}{2}}, \ t\ge1.
\end{split}
\end{align}
Next, we evaluate $K_{3}$ and $K_{4}$. Before do that, we prepare the following estimates for $0\le l \le s-1$:  
\begin{align}
\label{4-6}
\|\p_{x}^{l+1}(u^{2}(\cdot, t))\|_{L^{1}}&\le CE_{s, 0}(1+t)^{-1-\frac{l}{2}}, \\
\label{4-7}
\|\p_{x}^{l}((u^{2}-\chi^{2})(\cdot, t))\|_{L^{1}}&\le CE_{s, 0}N(T)(1+t)^{-1-\frac{l}{2}+\e}.
\end{align}
Let $0\le l \le s-1$ and $0\le t \le T$. From \eqref{2-5}, \eqref{chi-decay} and \eqref{N}, we have 
\begin{align*}
\begin{split}
\|\p_{x}^{l+1}(u^{2}(\cdot, t))\|_{L^{1}}
&\le C\sum_{m=0}^{l+1}\|\p_{x}^{m}u(\cdot, t)\|_{L^{2}}\|\p_{x}^{l+1-m}u(\cdot, t)\|_{L^{2}} \\
&\le CE_{s, 0}(1+t)^{-1-\frac{l}{2}}  
\end{split}
\end{align*}
and 
\begin{align*}
\begin{split}
\|\p_{x}^{l}((u^{2}-\chi^{2})(\cdot, t))\|_{L^{1}}&=\|\p_{x}^{l}((\psi(u+\chi))(\cdot, t))\|_{L^{1}}\\
&\le C\sum_{m=0}^{l}\|\p_{x}^{m}\psi(\cdot, t)\|_{L^{2}}\|\p_{x}^{l-m}((u+\chi)(\cdot, t))\|_{L^{2}} \\
&\le CE_{s, 0}N(T)\sum_{m=0}^{l}(1+t)^{-\frac{3}{4}-\frac{m}{2}+\e}(1+t)^{-\frac{1}{4}-\frac{l}{2}+\frac{m}{2}}\\
&\le CE_{s, 0}N(T)(1+t)^{-1-\frac{l}{2}+\e}.  
\end{split}
\end{align*}
Therefore, by using Young's inequality, Lemma~\ref{lem.asymp1} and~\eqref{4-6}, we obtain 
\begin{align}
\label{4-8}
\begin{split}
\|\p_{x}^{l}K_{3}(\cdot, t)\|_{L^{2}}&\le C\int_{0}^{t/2}\|\p_{x}^{l+1}(T-G_{0})(t-\tau)*u^{2}(\tau)\|_{L^{2}}d\tau\\
&\ \ \ +C\int_{t/2}^{t}\|(T-G_{0})(t-\tau)*\p_{x}^{l+1}(u^{2})(\tau)\|_{L^{2}}d\tau\\
&\le C\int_{0}^{t/2}\|\p_{x}^{l+1}(T-G_{0})(\cdot, t-\tau)\|_{L^{2}}\|u^{2}(\cdot, \tau)\|_{L^{1}}d\tau\\
&\ \ \ +C\int_{t/2}^{t}\|(T-G_{0})(\cdot, t-\tau)\|_{L^{2}}\|\p_{x}^{l+1}(u^{2}(\cdot, \tau))\|_{L^{1}}d\tau\\
&\le CE_{s, 0}\int_{0}^{t/2}(t-\tau)^{-\frac{5}{4}-\frac{l}{2}}(1+\tau)^{-\frac{1}{2}}d\tau+CE_{s, 0}\int_{t/2}^{t}(t-\tau)^{-\frac{3}{4}}(1+\tau)^{-1-\frac{l}{2}}d\tau\\
&\le CE_{s, 0}(1+t)^{-\frac{3}{4}-\frac{l}{2}}, \ t\ge1. 
\end{split}
\end{align}
On the other hand, from Young's inequality, \eqref{heat-decay1} and~\eqref{4-7}, it follows that 
\begin{align}
\label{4-9}
\begin{split}
\|\p_{x}^{l}K_{4}(\cdot, t)\|_{L^{2}}&\le C\int_{0}^{t/2}\|\p_{x}^{l+1}G_{0}(t-\tau)*(u^{2}-\chi^{2})(\tau)\|_{L^{2}}d\tau\\
&\ \ \ +C\int_{t/2}^{t}\|\p_{x}G_{0}(t-\tau)*\p_{x}^{l}(u^{2}-\chi^{2})(\tau)\|_{L^{2}}d\tau\\
&\le C\int_{0}^{t/2}\|\p_{x}^{l+1}G_{0}(\cdot, t-\tau)\|_{L^{2}}\|(u^{2}-\chi^{2})(\cdot, \tau)\|_{L^{1}}d\tau\\
&\ \ \ +C\int_{t/2}^{t}\|\p_{x}G_{0}(\cdot, t-\tau)\|_{L^{2}}\|\p_{x}^{l}((u^{2}-\chi^{2})(\cdot, \tau))\|_{L^{1}}d\tau\\
&\le CE_{s, 0}N(T)\int_{0}^{t/2}(t-\tau)^{-\frac{3}{4}-\frac{l}{2}}(1+\tau)^{-1+\e}d\tau\\
&\ \ \ +CE_{s, 0}N(T)\int_{t/2}^{t}(t-\tau)^{-\frac{3}{4}}(1+\tau)^{-1-\frac{l}{2}+\e}d\tau\\
&\le CE_{s, 0}N(T)(1+t)^{-\frac{3}{4}-\frac{l}{2}+\e}, \ t\ge1. 
\end{split}
\end{align}
Thus, combining \eqref{4-4}, \eqref{4-5}, \eqref{4-8} and \eqref{4-9}, we arrive at  
\begin{align}
\label{4-10}
\begin{split}
\|\p_{x}^{l}\psi(\cdot, t)\|_{L^{2}}\le CE_{s, 1}(1+t)^{-\frac{3}{4}-\frac{l}{2}}+CE_{s, 0}N(T)(1+t)^{-\frac{3}{4}-\frac{l}{2}+\e}, \ 1\le t\le T. 
\end{split}
\end{align}
For $0\le t\le 1$, from \eqref{2-5}, \eqref{chi-decay} and $|M|\le E_{s, 0}\le E_{s, 1}$, we easily see  
\begin{equation}
\label{4-11}
\|\p_{x}^{l}\psi(\cdot, t)\|_{L^{2}}\le \|\p_{x}^{l}u(\cdot, t)\|_{L^{2}}+\|\p_{x}^{l}\chi(\cdot, t)\|_{L^{2}}\le CE_{s, 1}, \ 0\le t\le 1.
\end{equation}
Summing up \eqref{4-10} and \eqref{4-11}, it follows that  
\begin{equation*}
N(T)\le CE_{s, 1}+C_{1}E_{s, 0}N(T),
\end{equation*}
where $C_{1}$ is a positive constant. Therefore, we arrive at the desired estimate 
\begin{equation*}
N(T)\le 2CE_{s, 1}
\end{equation*}
if $E_{s, 0}$ is so small that $C_{1}E_{s, 0}\le\frac{1}{2}$. This completes the proof. 
\end{proof}

Finally, we shall derive the $L^{\infty}$-decay estimate of $\psi(x, t)$:
\begin{prop}
\label{prop.psi-Linf}
Assume the same conditions on $u_{0}$ in Theorem~\ref{thm.main1} are valid. Then, for any $\e>0$, we have 
\begin{align}
\label{psi-Linf}
\|\p_{x}^{l}\psi(\cdot, t)\|_{L^{\infty}}\le CE_{s, 1}(1+t)^{-1-\frac{l}{2}+\e}, \ \ t\ge0
\end{align}
for any integer $0\le l\le s-1$, where $\psi(x, t)$ is defined by~\eqref{psi1}.
\end{prop}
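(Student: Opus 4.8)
The plan is to revisit the Duhamel decomposition $\psi = K_1 + K_2 + K_3 + K_4$ from \eqref{psi2} and estimate each $\p_x^l K_j$ directly in $L^\infty$, now that the sharp $L^2$ information in \eqref{psi-L2} is available as an input. The idea is exactly the same as in the proof of Proposition \ref{prop.psi-L2}, but with $L^2$ target norms replaced by $L^\infty$ ones; this costs an extra factor $(1+t)^{-1/4}$ per term in the decay rate, which matches the jump from $-\frac34$ to $-1$ in the exponent of the conclusion. First I would treat the linear terms: for $K_1 = (T-G_0)(t)*u_0$ apply Young's inequality with the $L^\infty$ bound $\|\p_x^l(T-G_0)(\cdot,t)\|_{L^\infty}\le Ct^{-1-l/2}$ from Lemma \ref{lem.asymp1} with $p=\infty$, giving $\|\p_x^l K_1(\cdot,t)\|_{L^\infty}\le C\|u_0\|_{L^1}(1+t)^{-1-l/2}$ for $t\ge 1$; for $K_2 = G_0(t)*\psi_0$, since $\psi_0\in L^1_1(\R)$ and $\int_\R\psi_0\,dx = M - M = 0$ by \eqref{BC}, apply \eqref{heat-decay2} with $p=\infty$ to obtain $\|\p_x^l K_2(\cdot,t)\|_{L^\infty}\le C\|u_0\|_{L^1_1}(1+t)^{-1-l/2}$ for $t\ge1$.

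For the nonlinear terms I would split the time integral at $t/2$ as before. In $K_3 = -\frac{\beta}{2}\int_0^t (T-G_0)(t-\tau)*(u^2)_x(\tau)\,d\tau$, on $[0,t/2]$ put all $l+1$ derivatives on $T-G_0$ and pair its $L^\infty$ norm $(t-\tau)^{-\frac32-\frac{l}{2}}$ with $\|u^2(\cdot,\tau)\|_{L^1}\le CE_{s,0}^2(1+\tau)^{-1/2}$; on $[t/2,t]$ pair $\|(T-G_0)(\cdot,t-\tau)\|_{L^\infty}\le C(t-\tau)^{-1}$ with $\|\p_x^{l+1}(u^2)(\cdot,\tau)\|_{L^1}\le CE_{s,0}^2(1+\tau)^{-1-l/2}$ from \eqref{4-6}. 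The first integral behaves like $(1+t)^{-1-l/2}$ and the second, because $(t-\tau)^{-1}$ is not integrable at $\tau=t$, must instead be handled by first moving one derivative off $T-G_0$: write $(T-G_0)(t-\tau)*\p_x^{l+1}(u^2) = \p_x(T-G_0)(t-\tau)*\p_x^{l}(u^2)$ and use $\|\p_x(T-G_0)(\cdot,t-\tau)\|_{L^\infty}\le C(t-\tau)^{-3/2}$ — but that reintroduces non-integrability, so the cleaner route is to keep the near-diagonal part in $L^2$: bound $\|(T-G_0)(t-\tau)*\p_x^{l+1}(u^2)\|_{L^\infty}$ by $\|(T-G_0)(\cdot,t-\tau)\|_{L^2}\|\p_x^{l+1}(u^2)(\cdot,\tau)\|_{L^2} + \|\p_x(T-G_0)(\cdot,t-\tau)\|_{L^2}\|\p_x^{l}(u^2)(\cdot,\tau)\|_{L^2}$ via the splitting into low/high frequency as in \eqref{2-17}, all of which decay integrably. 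For $K_4 = -\frac{\beta}{2}\int_0^t G_0(t-\tau)*(u^2-\chi^2)_x(\tau)\,d\tau$ I would use \eqref{heat-decay1} with $p=\infty$ together with \eqref{4-7} (which provides $\|\p_x^l(u^2-\chi^2)(\cdot,\tau)\|_{L^1}\le CE_{s,0}\,N(T)(1+\tau)^{-1-l/2+\e}$, where $N(T)\le 2CE_{s,1}$ by Proposition \ref{prop.psi-L2}): on $[0,t/2]$ pair $\|\p_x^{l+1}G_0(\cdot,t-\tau)\|_{L^\infty}\le C(t-\tau)^{-1-l/2}$ with $\|(u^2-\chi^2)(\cdot,\tau)\|_{L^1}\le CE_{s,1}^2(1+\tau)^{-1/2+\e}$, and on $[t/2,t]$ pair $\|\p_x G_0(\cdot,t-\tau)\|_{L^\infty}\le C(t-\tau)^{-1}$ with $\|\p_x^l(u^2-\chi^2)(\cdot,\tau)\|_{L^1}$; again the non-integrable kernel $(t-\tau)^{-1}$ on the near-diagonal piece forces splitting into $L^2\times L^2$ as above, using \eqref{psi-L2} for the $\psi$-factor.

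Collecting the four estimates gives $\|\p_x^l\psi(\cdot,t)\|_{L^\infty}\le CE_{s,1}(1+t)^{-1-l/2+\e}$ for $t\ge1$, and for $0\le t\le1$ the bound follows trivially from \eqref{2-6} and \eqref{chi-decay} as in \eqref{4-11}. The main obstacle, as indicated above, is the near-diagonal time integral on $[t/2,t]$: the most natural $L^\infty$--$L^1$ Young pairing produces a kernel $(t-\tau)^{-1}$ (or worse, $(t-\tau)^{-3/2}$ after shifting derivatives) that is not integrable up to $\tau=t$, so one must instead estimate that piece through Plancherel, separating low and high frequencies exactly as in \eqref{2-17}--\eqref{2-19}, where the high-frequency part decays exponentially and the low-frequency part carries an integrable kernel; this is the only place where genuine care beyond the Proposition \ref{prop.psi-L2} template is needed. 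An alternative, cleaner packaging would be to run a single bootstrap closing $L^2$ and $L^\infty$ estimates of all $\p_x^l\psi$ simultaneously in one weighted sup-norm, but since \eqref{psi-L2} is already in hand I would simply feed it in and argue term by term.
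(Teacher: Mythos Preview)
Your approach is correct and matches the paper's proof: the paper handles the near-diagonal pieces of $K_3$ and $K_4$ on $[t/2,t]$ by the simple Young inequality $\|f*g\|_{L^\infty}\le\|f\|_{L^2}\|g\|_{L^2}$, pairing $\|(T-G_0)(\cdot,t-\tau)\|_{L^2}\le C(t-\tau)^{-3/4}$ with $\|\p_x^{l+1}(u^2)(\cdot,\tau)\|_{L^2}\le CE_{s,0}(1+\tau)^{-5/4-l/2}$ for $K_3$, and $\|\p_x G_0(\cdot,t-\tau)\|_{L^2}\le C(t-\tau)^{-3/4}$ with $\|\p_x^{l}(u^2-\chi^2)(\cdot,\tau)\|_{L^2}\le CE_{s,1}(1+\tau)^{-5/4-l/2+\e}$ for $K_4$---no Plancherel or frequency splitting is needed, since the first of the two terms you wrote already suffices. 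One slip: in your $K_4$ estimate on $[0,t/2]$ you wrote $\|(u^2-\chi^2)(\cdot,\tau)\|_{L^1}\le CE_{s,1}^2(1+\tau)^{-1/2+\e}$, but \eqref{4-7} actually gives $(1+\tau)^{-1+\e}$, which is what is needed for that integral to close at the correct rate.
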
 
\begin{proof}
We evaluate each term of the right hand side of \eqref{psi2}. For $K_{1}$, from Young's inequality and Lemma \ref{lem.asymp1}, we have
\begin{align}
\label{4-12}
\begin{split}
\|\p_{x}^{l}K_{1}(\cdot, t)\|_{L^{\infty}}&\le C\|u_{0}\|_{L^{1}}(1+t)^{-1-\frac{l}{2}}, \ t\ge1.
\end{split}
\end{align}
In the same way to get \eqref{4-5}, we obtain from \eqref{heat-decay2}
\begin{align}
\label{4-13}
\begin{split}
\|\p_{x}^{l}K_{2}(\cdot, t)\|_{L^{\infty}}&\le C\|u_{0}\|_{L^{1}_{1}}(1+t)^{-1-\frac{l}{2}}, \ t\ge1.
\end{split}
\end{align}
Next, we evaluate $K_{3}$ and $K_{4}$. Similarly as \eqref{4-6} and \eqref{4-7}, using \eqref{2-5}, \eqref{2-6}, \eqref{chi-decay} and \eqref{psi-L2}, we have the following estimate for $0\le l \le s-1$:  
\begin{align}
\label{4-14}
\|\p_{x}^{l+1}(u^{2}(\cdot, t))\|_{L^{2}}&\le CE_{s, 0}(1+t)^{-\frac{5}{4}-\frac{l}{2}}, \\
\label{4-15}
\|\p_{x}^{l}((u^{2}-\chi^{2})(\cdot, t))\|_{L^{2}}&\le CE_{s, 1}(1+t)^{-\frac{5}{4}-\frac{l}{2}+\e}. 
\end{align}
By using Young's inequality, Lemma~\ref{lem.asymp1}, \eqref{4-6} and \eqref{4-14}, we obtain 
\begin{align}
\label{4-16}
\begin{split}
\|\p_{x}^{l}K_{3}(\cdot, t)\|_{L^{\infty}}&\le C\int_{0}^{t/2}\|\p_{x}^{l+1}(T-G_{0})(t-\tau)*u^{2}(\tau)\|_{L^{\infty}}d\tau\\
&\ \ \ +C\int_{t/2}^{t}\|(T-G_{0})(t-\tau)*\p_{x}^{l+1}(u^{2})(\tau)\|_{L^{\infty}}d\tau\\
&\le C\int_{0}^{t/2}\|\p_{x}^{l+1}(T-G_{0})(\cdot, t-\tau)\|_{L^{\infty}}\|u^{2}(\cdot, \tau)\|_{L^{1}}d\tau\\
&\ \ \ +C\int_{t/2}^{t}\|(T-G_{0})(\cdot, t-\tau)\|_{L^{2}}\|\p_{x}^{l+1}(u^{2}(\cdot, \tau))\|_{L^{2}}d\tau\\
&\le CE_{s, 0}\int_{0}^{t/2}(t-\tau)^{-\frac{3}{2}-\frac{l}{2}}(1+\tau)^{-\frac{1}{2}}d\tau+CE_{s, 0}\int_{t/2}^{t}(t-\tau)^{-\frac{3}{4}}(1+\tau)^{-\frac{5}{4}-\frac{l}{2}}d\tau\\
&\le CE_{s, 0}(1+t)^{-1-\frac{l}{2}}, \ t\ge1. 
\end{split}
\end{align}
Moreover, from Young's inequality, \eqref{heat-decay1}, \eqref{4-7} and \eqref{4-15}, it follows that 
\begin{align}
\label{4-17}
\begin{split}
\|\p_{x}^{l}K_{4}(\cdot, t)\|_{L^{\infty}}&\le C\int_{0}^{t/2}\|\p_{x}^{l+1}G_{0}(t-\tau)*(u^{2}-\chi^{2})(\tau)\|_{L^{\infty}}d\tau\\
&\ \ \ +C\int_{t/2}^{t}\|\p_{x}G_{0}(t-\tau)*\p_{x}^{l}(u^{2}-\chi^{2})(\tau)\|_{L^{\infty}}d\tau\\
&\le C\int_{0}^{t/2}\|\p_{x}^{l+1}G_{0}(\cdot, t-\tau)\|_{L^{\infty}}\|(u^{2}-\chi^{2})(\cdot, \tau)\|_{L^{1}}d\tau\\
&\ \ \ +C\int_{t/2}^{t}\|\p_{x}G_{0}(\cdot, t-\tau)\|_{L^{2}}\|\p_{x}^{l}((u^{2}-\chi^{2})(\cdot, \tau))\|_{L^{2}}d\tau\\
&\le CE_{s, 1}\int_{0}^{t/2}(t-\tau)^{-1-\frac{l}{2}}(1+\tau)^{-1+\e}d\tau+CE_{s, 1}\int_{t/2}^{t}(t-\tau)^{-\frac{3}{4}}(1+\tau)^{-\frac{5}{4}-\frac{l}{2}+\e}d\tau\\
&\le CE_{s, 1}(1+t)^{-1-\frac{l}{2}+\e}, \ t\ge1. 
\end{split}
\end{align}
Thus, combining \eqref{4-12}, \eqref{4-13}, \eqref{4-16} and \eqref{4-17}, we arrive at  
\begin{align}
\label{4-18}
\begin{split}
\|\p_{x}^{l}\psi(\cdot, t)\|_{L^{\infty}}&\le CE_{s, 1}(1+t)^{-1-\frac{l}{2}+\e}, \ t\ge1. 
\end{split}
\end{align}
For $0\le t\le 1$, from \eqref{2-6} and \eqref{chi-decay}, similarly as \eqref{4-11}, we get 
\begin{equation}
\label{4-19}
\|\p_{x}^{l}\psi(\cdot, t)\|_{L^{\infty}}\le \|\p_{x}^{l}u(\cdot, t)\|_{L^{\infty}}+\|\p_{x}^{l}\chi(\cdot, t)\|_{L^{\infty}}\le CE_{s, 1}, \ 0\le t\le 1.
\end{equation}
Summing up \eqref{4-18} and \eqref{4-19}, we complete the proof. 
\end{proof}

\begin{proof}[\rm{\bf{Proof of Theorem~\ref{thm.main1}}}]
By the interpolation inequality, Proposition~\ref{prop.psi-L2} and Proposition~\ref{prop.psi-Linf}, we immediately obtain the estimate~\eqref{main1}. This completes the proof.
\end{proof}

\section{Second Asymptotic Profile}

Next in this section, we shall prove our second main result Theorem~\ref{thm.main2}. First, we consider 
\begin{align}
\begin{split}\label{v}
v_{t}+\alpha v_{x}+(\beta \chi v)_{x}-\mu v_{xx}+\frac{2B}{b^{3}}\chi_{xxx}&=0 , \ \ x\in \R, \ \ t>0, \\
v(x, 0)&=0, \ \ x\in \R.
\end{split}
\end{align}
The leading term of the solution $v(x, t)$ to \eqref{v} is given by $V(x, t)$ defined by \eqref{second1}. Actually, by the change of variable, from Proposition~4.3 in~\cite{F19-1}, we can easily obtain the following proposition:
\begin{prop}\label{prop.second1}
Let $l$ be a non-negative integer. Then, for $|M| \le 1$ and $p\in [1, \infty]$, we have 
\begin{equation}
\label{v-V}
\|\p_{x}^{l}(v(\cdot, t)-V(\cdot, t))\|_{L^{p}} \le C|M|(1+t)^{-1+\frac{1}{2p}-\frac{l}{2}}, \ t\ge1, 
\end{equation}
where $v(x, t)$ is the solution to \eqref{v}, while $V(x, t)$ is defined by \eqref{second1}.
\end{prop}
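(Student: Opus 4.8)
The plan is to solve \eqref{v} explicitly via the representation formula \eqref{formula} of Lemma~\ref{lem.formula} applied with $z_0\equiv 0$, $z=v$ and forcing $\p_x\lambda=-\frac{2B}{b^3}\chi_{xxx}$, i.e. $\lambda(x,t)=-\frac{2B}{b^3}\chi_{xx}(x,t)$, so that
\begin{equation*}
v(x,t)=\int_0^t U[\p_x\lambda(\tau)](x,t,\tau)\,d\tau
=-\frac{2B}{b^3}\int_0^t U[\chi_{xxx}(\tau)](x,t,\tau)\,d\tau .
\end{equation*}
Then I would identify, inside this integral, the ``most singular'' piece of the kernel $\p_x(G_0(x-y,t-\tau)\eta(x,t))(\eta(y,\tau))^{-1}$, which is the one where all the weight factors $\eta$ are frozen at a single point; replacing $\eta(x,t)$ and $\eta(y,\tau)^{-1}$ by the profile $\eta_*$ evaluated at the self-similar variable and using the self-similar form of $\chi$ produces exactly the function $V(x,t)$ in \eqref{second1}–\eqref{second3}, where the factor $(1+t)^{-1}\log(1+t)$ emerges from the time integral $\int_0^t (t-\tau)^{-1/2}(1+\tau)^{-3/2}\cdot(1+\tau)^{1/2}\,d\tau$ type estimate after the change of variables. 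This identification of the leading profile is the conceptual heart of the matter; it is carried out in detail in~\cite{F19-1}, so by the change of variable $x\mapsto x-\alpha(1+t)$ the statement follows from Proposition~4.3 there, and my job is merely to record that reduction.

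Concretely, the key steps in order: (i) verify that \eqref{v} has the form \eqref{aux} with the stated data, so Lemma~\ref{lem.formula} applies and gives the closed formula for $v$; (ii) recall the decay estimates for $\chi$ and its derivatives from Lemma~\ref{lem.chi-decay} and for $\eta,\eta^{-1}$ from \eqref{eta-est1}, \eqref{eta-est2} and Lemma~\ref{lem.eta-decay}, which are exactly the inputs needed to estimate $U[\chi_{xxx}]$; (iii) invoke the change of variable $\xi=x-\alpha(1+t)$ that turns \eqref{v} (with $\alpha=2B/b$) into the corresponding $\alpha=0$ problem treated in~\cite{F19-1}, noting that $\chi$, $G_0$, $\eta$ and the target $V$ all transform compatibly since they are built from the same shifted self-similar variable; (iv) quote Proposition~4.3 of~\cite{F19-1} to get \eqref{v-V} for $t\ge 1$ in the shifted variable, and translate it back. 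Since $L^p$ norms are translation invariant, no loss occurs in step (iv).

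The main obstacle — though it is deliberately outsourced to the cited paper rather than reproved here — is controlling the difference between $U[\chi_{xxx}]$ and its frozen-coefficient approximation: one must show that the ``error'' generated by replacing $\eta(x,t)\eta(y,\tau)^{-1}$ with $\eta_*$ at one point, plus the error in replacing $\chi$ by its asymptotic self-similar profile, decays at the strictly faster rate $(1+t)^{-1+\frac1{2p}-\frac l2}$ without the logarithm. This is exactly where the weighted estimate $\int_{\R}\phi=0\Rightarrow$ extra $(1+t)^{-1/2}$ decay (cf. \eqref{heat-decay2}) and the Hölder-type bounds on $\p_x^j\eta$ from Lemma~\ref{lem.eta-decay} are used; splitting the $\tau$-integral at $t/2$ handles the two competing singularities (the kernel singularity near $\tau=t$ and the slow decay near $\tau=0$). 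Given that all of this machinery is already assembled in Section~3 and the analogous claim is Proposition~4.3 of~\cite{F19-1}, the proof here reduces to: \emph{the assertion follows from Proposition~4.3 in~\cite{F19-1} via the change of variable $x\mapsto x-\alpha(1+t)$ with $\alpha=2B/b$, using that $v$, $\chi$ and $V$ are all expressed through the same shifted self-similar variable and that $L^p$ norms are invariant under this translation.}
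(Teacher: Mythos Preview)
Your proposal is correct and matches the paper's own argument exactly: the paper also simply invokes Proposition~4.3 of~\cite{F19-1} together with the change of variable $x\mapsto x-\alpha(1+t)$, and offers no further details. Your additional commentary on how one would carry out the reduction (the representation formula, the self-similar structure, the splitting of the $\tau$-integral) is accurate but already subsumed by the cited reference.
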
 
By virtue of Proposition~\ref{prop.second1}, it is sufficient for the proof of Theorem~\ref{thm.main2} to show the following proposition: 
\begin{prop}
\label{prop.second2}
Let $s\ge2$. Assume that $u_{0}\in L^{1}_{1}(\R)\cap H^{s}(\R)$ and $E_{s, 0}$ is sufficiently small. Then, for the solution to~\eqref{VFW}, the estimate
\begin{equation}
\label{asymp2}
\|\p_{x}^{l}(u(\cdot, t)-\chi(\cdot, t)-v(\cdot, t))\|_{L^{p}}\le CE_{s, 1}(1+t)^{-1+\frac{1}{2p}-\frac{l}{2}}, \ t\ge0
\end{equation}
holds for any $p\in [2, \infty]$ and integer $0\le l\le s-2$, where $\chi(x, t)$ is defined by~\eqref{chi1} with $\displaystyle \alpha=\frac{2B}{b}$, while $v(x, t)$ is the solution to~\eqref{v}.
\end{prop}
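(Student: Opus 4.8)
The plan is to work with the quantity $w(x,t):=\psi(x,t)-v(x,t)=u(x,t)-\chi(x,t)-v(x,t)$ and to obtain the sharp decay rate $(1+t)^{-1+\frac{1}{2p}-\frac{l}{2}}$ via a bootstrap argument on the $L^2$-norms, followed by an upgrade to $L^\infty$ and interpolation, exactly in the spirit of Proposition~\ref{prop.psi-L2} and Proposition~\ref{prop.psi-Linf}. First I would derive the integral equation satisfied by $w$. Subtracting the Duhamel formula for $v$ (from~\eqref{v}, using the semigroup $U[\cdot]$ of Lemma~\ref{lem.formula} with $\lambda$ corresponding to the linear forcing $-\frac{2B}{b^3}\chi_{xx}$) from that of $\psi$ (from~\eqref{psi.eq}), the principal linear terms $\alpha\psi_x+(\beta\chi\psi)_x-\mu\psi_{xx}$ and the forcing $\frac{2B}{b^3}\chi_{xxx}$ cancel, leaving
\begin{equation*}
w_t+\alpha w_x+(\beta\chi w)_x-\mu w_{xx}=-\Bigl(\tfrac{\beta}{2}\psi^2\Bigr)_x-\alpha(b^2-\p_x^2)^{-1}\p_x^3 u+\tfrac{2B}{b^3}\chi_{xxx},
\end{equation*}
with $w(x,0)=\psi_0(x)$, and note that $\alpha=\frac{2B}{b}$. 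The crucial algebraic observation is the expansion
$$\alpha(b^2-\p_x^2)^{-1}\p_x^3 u=\tfrac{2B}{b^3}\p_x^3 u+\tfrac{2B}{b^3}(b^2-\p_x^2)^{-1}\p_x^5 u,$$
so that the two dispersive contributions partially telescope: the forcing becomes $-\left(\frac{\beta}{2}\psi^2\right)_x-\frac{2B}{b^3}\p_x^3\psi-\frac{2B}{b^3}(b^2-\p_x^2)^{-1}\p_x^5 u$, where I have used $\p_x^3 u-\chi_{xxx}=\p_x^3\psi$. Then I would write $w$ via Lemma~\ref{lem.formula}:
\begin{equation*}
w(t)=U[\psi_0](t,0)+\int_0^t U\Bigl[\p_x\bigl(-\tfrac{\beta}{2}\psi^2-\tfrac{2B}{b^3}\p_x^2\psi-\tfrac{2B}{b^3}(b^2-\p_x^2)^{-1}\p_x^4 u\bigr)\Bigr](t,\tau)\,d\tau.
\end{equation*}

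The estimates then proceed term by term. For the initial-data term $U[\psi_0](t,0)$: since $\int_\R\psi_0\,dx=\int_\R u_0\,dx-M=0$ and $\psi_0\in L^1_1\cap H^s$, Lemma~\ref{lem.est-U1} gives the decay $(1+t)^{-3/4-l/2}$ in $L^2$, which is better than needed. For the nonlinear term $\p_x(\psi^2)$ I would use the standard $U$-kernel estimates (the kind underlying Lemmas~\ref{lem.est-U1} and the arguments in~\cite{K07,KU17}): $\p_x^2 G_0$ decay together with $\|\psi^2\|_{L^1}\le\|\psi\|_{L^2}^2\le CE_{s,1}^2(1+t)^{-3/2+2\e}$ (from Proposition~\ref{prop.psi-L2}) on $[0,t/2]$, and $\|\p_x^l(\psi^2)\|_{L^1}$ decay on $[t/2,t]$; the resulting time-integral yields $(1+t)^{-1-l/2}$ up to an $\e$-loss that is absorbed since the nonlinear decay $(1+t)^{-3/2}$ is strictly faster than the target $(1+t)^{-1}$. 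For the genuinely dispersive term $\frac{2B}{b^3}\p_x^3\psi$: this is where I pick up the logarithm that is already hidden inside $v$—but since we have subtracted $v$, the point is that $\p_x^3\psi$ behaves like $\p_x^3\chi$ to leading order, which decays as $(1+t)^{-2}$ in $L^1$-type norms by Lemma~\ref{lem.chi-decay}, giving via the $U$-kernel a contribution of order $(1+t)^{-1-l/2}$ with no log. Finally the remainder term $(b^2-\p_x^2)^{-1}\p_x^4 u$ is estimated using Lemma~\ref{lem.est-dispersion} to remove the resolvent at no cost, reducing it to $\|\p_x^4 u\|$, which by Theorem~\ref{GE} decays fast (it costs four derivatives, i.e.\ two extra powers of $(1+t)^{-1/2}$ beyond the baseline), so this term contributes $(1+t)^{-1-l/2}$ comfortably—this is precisely the place where the nonlocal dispersion term is genuinely better-behaved than a bare $u_{xxx}$ and where the regularity requirement $s\ge2$ enters.

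Assembling these bounds into a Gronwall/bootstrap inequality for $M(T):=\sup_{0\le t\le T}\sum_{l=0}^{s-2}(1+t)^{1+l/2}\|\p_x^l w(\cdot,t)\|_{L^2}$ yields $M(T)\le CE_{s,1}+C_1E_{s,0}M(T)$, hence $M(T)\le 2CE_{s,1}$ once $E_{s,0}$ is small, establishing the $L^2$ case $p=2$ of~\eqref{asymp2}. The $L^\infty$ case is obtained by the same Duhamel representation, now using $L^\infty$-kernel estimates on $[0,t/2]$ and $L^2$-estimates with the $w$-bound just proved on $[t/2,t]$, exactly as in the passage from Proposition~\ref{prop.psi-L2} to Proposition~\ref{prop.psi-Linf}; the case $0\le t\le1$ follows trivially from Theorem~\ref{GE}, Lemma~\ref{lem.chi-decay} and Proposition~\ref{prop.second1}. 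The general $p\in[2,\infty]$ case then follows by the interpolation inequality $\|\cdot\|_{L^p}\le\|\cdot\|_{L^\infty}^{1-2/p}\|\cdot\|_{L^2}^{2/p}$. The main obstacle is the careful handling of the dispersive forcing $\frac{2B}{b^3}\p_x^3\psi$: one must verify that replacing $\p_x^3\psi$ by $\p_x^3\chi$ (which drives the log into $v$) leaves a remainder $\p_x^3 w$ that the bootstrap can close, i.e.\ that the $U$-kernel applied to $\p_x^3\chi$ produces exactly the order $(1+t)^{-1-l/2}$ without a log and that the leftover $\p_x^3(\psi-\chi\text{-part})$ feeds back consistently; this is the delicate step and is the reason the statement is phrased as $u-\chi-v$ (with $v$ the solution of the auxiliary linear problem) rather than directly $u-\chi-V$.
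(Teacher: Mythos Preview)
Your overall strategy---set $w=\psi-v$, derive its equation, apply Lemma~\ref{lem.formula}, and estimate each Duhamel piece---matches the paper. However, two points in your execution are genuinely off. First, no bootstrap is needed or even meaningful here: the right-hand side of the $w$-equation involves only $\psi$, $u$, and $\chi$, whose decay is already known from Theorem~\ref{thm.main1}, Theorem~\ref{GE}, and Lemma~\ref{lem.chi-decay}. There is no $w$-dependent forcing, so your inequality $M(T)\le CE_{s,1}+C_1E_{s,0}M(T)$ has no source for the $M(T)$ term; the proof is direct. Your suggestion to split $\p_x^3\psi=\p_x^3 v+\p_x^3 w$ and feed $\p_x^3 w$ back into the bootstrap is both unnecessary and unworkable (for $s=2$ the bootstrap norm controls only $\|w\|_{L^2}$, not $\|\p_x^3 w\|_{L^2}$).

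Second, your algebraic decomposition of the dispersive forcing and its subsequent handling are problematic. Your claim that ``$\p_x^3\psi$ behaves like $\p_x^3\chi$ to leading order'' is simply wrong ($\psi=u-\chi$, not $\chi$), and your treatment of $(b^2-\p_x^2)^{-1}\p_x^5 u$ via Lemma~\ref{lem.est-dispersion} as ``reducing to $\|\p_x^4 u\|$'' requires $s\ge4$, not the stated $s\ge2$. The paper avoids both issues by a different split: it writes
\[
\alpha(b^2-\p_x^2)^{-1}\p_x^3 u-\tfrac{\alpha}{b^2}\chi_{xxx}
=\alpha(b^2-\p_x^2)^{-1}\p_x^3\psi+\tfrac{\alpha}{b^2}(b^2-\p_x^2)^{-1}\p_x^5\chi,
\]
placing the fifth derivative on the smooth function $\chi$ (so regularity is free) and keeping the resolvent on $\p_x^3\psi$. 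The term $I_3$ involving $(b^2-\p_x^2)^{-1}\p_x^3\psi$ is then handled by one integration by parts inside the $U$-kernel to expose $(b^2-\p_x^2)^{-1}\p_x\psi$, to which Lemma~\ref{lem.est-dispersion} and the known decay $\|\p_x\psi\|_{L^2}\le C(1+\tau)^{-5/4+\e}$ from Theorem~\ref{thm.main1} apply; on $[t/2,t]$ one uses only the crude bound $\|\p_x^{n+1}\psi\|_{L^p}\le\|\p_x^{n+1}u\|_{L^p}+\|\p_x^{n+1}\chi\|_{L^p}$ from Theorem~\ref{GE} and Lemma~\ref{lem.chi-decay}, which needs at most $s$ derivatives and closes for $0\le l\le s-2$.
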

\begin{proof}
Throughout this proof, we set $\displaystyle \alpha=\frac{2B}{b}$ and  
\begin{align}
\begin{split}\label{w.eq1}
w(x, t):=&\ u(x, t)-\chi(x, t)-v(x, t)=\psi(x, t)-v(x, t).
\end{split}
\end{align}
Then, from \eqref{psi.eq} and \eqref{v}, $w(x, t)$ satisfies 
\begin{equation*}
w_{t}+\alpha w_{x}+(\beta \chi w)_{x}-\mu w_{xx}=-\alpha(b^{2}-\p_{x}^{2})^{-1}\p_{x}^{3}u-\biggl(\frac{\beta}{2}\psi^{2}\biggl)_{x}+\frac{\alpha}{b^{2}}\chi_{xxx}.
\end{equation*}
Moreover, since 
\begin{align*}
\alpha(b^{2}-\p_{x}^{2})^{-1}\p_{x}^{3}u-\frac{\alpha}{b^{2}}\chi_{xxx}&=\alpha(b^{2}-\p_{x}^{2})^{-1}(\p_{x}^{3}u-\p_{x}^{3}\chi)+\alpha(b^{2}-\p_{x}^{2})^{-1}\p_{x}^{3}\chi-\frac{\alpha}{b^{2}}\chi_{xxx} \\
&=\alpha(b^{2}-\p_{x}^{2})^{-1}\p_{x}^{3}\psi+\alpha(b^{2}-\p_{x}^{2})^{-1}\left(\p_{x}^{3}\chi-(b^{2}-\p_{x}^{2})\frac{\p_{x}^{3}\chi}{b^{2}}\right)\\
&=\alpha(b^{2}-\p_{x}^{2})^{-1}\p_{x}^{3}\psi+\frac{\alpha}{b^{2}}(b^{2}-\p_{x}^{2})^{-1}\p_{x}^{5}\chi, 
\end{align*}
we obtain the following equation: 
\begin{align}
\begin{split}\label{w.eq2}
&w_{t}+\alpha w_{x}+(\beta \chi w)_{x}-\mu w_{xx}\\
&=-\alpha(b^{2}-\p_{x}^{2})^{-1}\p_{x}^{3}\psi-\frac{\alpha}{b^{2}}(b^{2}-\p_{x}^{2})^{-1}\p_{x}^{5}\chi-\biggl(\frac{\beta}{2}\psi^{2}\biggl)_{x}, \ \ x\in \R, \ t>0,\\
&w(x, 0)=u_{0}(x)-\chi(x, 0)=\psi_{0}(x), \ \ x\in \R.  
\end{split}
\end{align}
By the assumption of the initial data and \eqref{chi-est}, we get $\psi_{0}\in L^{1}_{1}(\R)\cap H^{s}(\R)$. Also, from \eqref{chi2} and \eqref{BC} we have $\displaystyle \int_{\R}\psi_{0}(x)dx=0$. From Lemma~\ref{formula}, we obtain the following integral equation: 
\begin{align}
\begin{split}\label{w.eq3}
w(x, t)&=U[\psi_{0}](x, t, 0)-\frac{\beta}{2}\int_{0}^{t}U[\p_{x}(\psi^{2})(\tau)](x, t, \tau)d\tau \\
&\ \ \ -\alpha\int_{0}^{t}U\left[(b^{2}-\p_{x}^{2})^{-1}\p_{x}^{3}\psi(\tau)\right](x, t, \tau)d\tau-\frac{\alpha}{b^{2}}\int_{0}^{t}U\left[(b^{2}-\p_{x}^{2})^{-1}\p_{x}^{5}\chi(\tau)\right](x, t, \tau)d\tau\\
&=:I_{1}+I_{2}+I_{3}+I_{4}.
\end{split}
\end{align}

We shall evaluate $I_{1}$, $I_{2}$, $I_{3}$ and $I_{4}$. First for $I_{1}$, by using Lemma~\ref{lem.est-U1} and $|M|\le \|u_{0}\|_{L^{1}_{1}}$, we get 
\begin{equation}\label{5-7}
\| \p_{x}^{l}I_{1}(\cdot, t)\|_{L^{2}} \le CE_{s, 1}(1+t)^{-\frac{3}{4}-\frac{l}{2}}, \ \ 0\le l\le s.
\end{equation}
From the Sobolev inequality, we see that 
\begin{align}
\begin{split}\label{5-8}
\| \p_{x}^{l}I_{1}(\cdot, t)\|_{L^{\infty}} &\le \sqrt{2} \| \p_{x}^{l}I_{1}(\cdot, t)\|_{L^{2}}^{1/2}\| \p_{x}^{l+1}I_{1}(\cdot, t)\|_{L^{2}}^{1/2}\\
&\le CE_{s, 1}(1+t)^{-1-\frac{l}{2}}, \ \ 0\le l\le s-1.
\end{split}
\end{align}
Therefore, for $p\in [2, \infty]$, by the interpolation inequality, combining \eqref{5-7} and \eqref{5-8}, we obtain
\begin{equation}\label{5-9}
\| \p_{x}^{l}I_{1}(\cdot, t)\|_{L^{p}} \le CE_{s, 1}(1+t)^{-1-\frac{1}{2p}-\frac{l}{2}}, \ \ 0\le l\le s-1.
\end{equation}

Next, we evaluate $I_{2}$, $I_{3}$ and $I_{4}$. In the following, let $p \in [2, \infty]$. First, for any given regular function $\lambda(x, t)$ and any integer $l$, from \eqref{U}, it follows that 
\begin{align*}
\p_{x}^{l}U[\p_{x}\lambda(\tau)](x, t, \tau)=\sum_{j=0}^{l+1}\begin{pmatrix}l+1 \\ j \end{pmatrix}\p_{x}^{l+1-j}\eta(x, t)\int_{\R}\p_{x}^{j}G_{0}(x-y, t-\tau)(\eta(y, \tau))^{-1}\lambda(y, \tau)dy. 
\end{align*}
Therefore, we have from Lemma~\ref{lem.eta-decay} and \eqref{eta-est1}
\begin{equation}\label{5-10}
\|\p_{x}^{l}U[\p_{x}\lambda(\tau)](\cdot, t, \tau)\|_{L^{p}}\le C\sum_{j=0}^{l+1}(1+t)^{-\frac{1}{2}(l+1-j)}\|\p_{x}^{j}J[\lambda](\cdot, t, \tau)\|_{L^{p}}, 
\end{equation}
where
\begin{equation}\label{5-11}
J[\lambda](x, t, \tau):=\int_{\R}G_{0}(x-y, t-\tau)(\eta(y, \tau))^{-1}\lambda(y, \tau)dy=(G_{0}(t-\tau)*(\eta^{-1}\lambda)(\tau))(x).
\end{equation}
By using \eqref{5-10} and \eqref{5-11}, we have
\begin{align}
\begin{split}\label{5-12}
\|\p_{x}^{l}I_{2}(\cdot, t)\|_{L^{p}}&\le C\sum_{j=0}^{l+1}(1+t)^{-\frac{1}{2}(l+1-j)}\int_{0}^{t}\|\p_{x}^{j}J[\psi^{2}](\cdot, t, \tau)\|_{L^{p}}d\tau\\
&\le C\sum_{j=0}^{l+1}(1+t)^{-\frac{1}{2}(l+1-j)}\left(\int_{0}^{t/2}+\int_{t/2}^{t}\right)\|\p_{x}^{j}J[\psi^{2}](\cdot, t, \tau)\|_{L^{p}}d\tau\\
&=:C\sum_{j=0}^{l+1}(1+t)^{-\frac{1}{2}(l+1-j)}(I_{2.1}+I_{2.2}).
\end{split}
\end{align}
For $I_{2.1}$, from \eqref{5-11}, Young's inequality, \eqref{eta-est2}, \eqref{heat-decay1} and Theorem~\ref{thm.main1}, we obtain 
\begin{align}
\begin{split}\label{5-13}
I_{2.1}&\le \int_{0}^{t/2}\|\p_{x}^{j}G_{0}(\cdot, t-\tau)\|_{L^{p}}\|\psi^{2}(\cdot, \tau)\|_{L^{1}}d\tau \\
&\le CE_{s, 1}\int_{0}^{t/2}(t-\tau)^{-\frac{1}{2}+\frac{1}{2p}-\frac{j}{2}}(1+\tau)^{-\frac{3}{2}+2\e}d\tau \le CE_{s, 1}(1+t)^{-\frac{1}{2}+\frac{1}{2p}-\frac{j}{2}}, \ \ t\ge1. 
\end{split}
\end{align}
On the other hand, for $I_{2.2}$ with $j=0$, we have from \eqref{heat-decay1} and Theorem~\ref{thm.main1}
\begin{align}
\begin{split}\label{5-14}
I_{2.2}&\le \int_{t/2}^{t}\|G_{0}(\cdot, t-\tau)\|_{L^{1}}\|\psi^{2}(\cdot, \tau)\|_{L^{p}}d\tau \le C\int_{t/2}^{t}\|\psi(\cdot, \tau)\|_{L^{\infty}}\|\psi(\cdot, \tau)\|_{L^{p}}d\tau\\
&\le CE_{s, 1}\int_{t/2}^{t}(1+\tau)^{-1+\e}(1+\tau)^{-1+\frac{1}{2p}+\e}d\tau \le CE_{s, 1}(1+t)^{-1+\frac{1}{2p}+2\e}, \ \ t\ge0. 
\end{split}
\end{align}
For $j=1, \cdots, l+1$, we prepare the following estimate
\begin{align*}
&\|\p_{x}^{j-1}((\eta^{-1}\psi^{2})(\cdot, \tau))\|_{L^{p}}\\
& \le C\sum_{m=0}^{j-1}\sum_{n=0}^{j-1-m}\|\p_{x}^{m}(\eta(\cdot, \tau)^{-1})\|_{L^{\infty}}\|\p_{x}^{n}\psi(\cdot, \tau)\|_{L^{\infty}}\|\p_{x}^{j-1-m-n}\psi(\cdot, \tau)\|_{L^{p}}\\
&\le CE_{s, 1}\sum_{m=0}^{j-1}\sum_{n=0}^{j-1-m}(1+\tau)^{-\frac{m}{2}}(1+\tau)^{-1-\frac{n}{2}+\e}(1+\tau)^{-1+\frac{1}{2p}-\frac{1}{2}(j-1-m-n)+\e}\\
&\le CE_{s, 1}(1+\tau)^{-2+\frac{1}{2p}-\frac{j-1}{2}+2\e}, 
\end{align*}
where we have used \eqref{lem.eta-decay}, \eqref{eta-est2} and Theorem~\ref{thm.main1}. Therefore, we get the following estimate for $I_{2.2}$ with $j=1, \cdots, l+1$: 
\begin{align}
\begin{split}\label{5-15}
I_{2.2}&\le \int_{t/2}^{t}\|\p_{x}G_{0}(\cdot, t-\tau)\|_{L^{1}}\|\p_{x}^{j-1}((\eta^{-1}\psi^{2})(\cdot, \tau))\|_{L^{p}}d\tau\\
&\le CE_{s, 1}\int_{t/2}^{t}(t-\tau)^{-\frac{1}{2}}(1+\tau)^{-2+\frac{1}{2p}-\frac{j-1}{2}+2\e}d\tau \le CE_{s, 1}(1+t)^{-1+\frac{1}{2p}-\frac{j}{2}+2\e}, \ \ t\ge0. 
\end{split}
\end{align}
Combining \eqref{5-14} and \eqref{5-15}, for all $j=0, 1, \cdots, l+1$, we have
\begin{equation}\label{5-16}
I_{2.2}\le CE_{s, 1}(1+t)^{-1+\frac{1}{2p}-\frac{j}{2}+2\e}, \ \ t\ge0.
\end{equation}
Therefore, summing up \eqref{5-12}, \eqref{5-13} and \eqref{5-16}, we arrive at 
\begin{equation}\label{5-17}
\|\p_{x}^{l}I_{2}(\cdot, t)\|_{L^{p}}\le CE_{s, 1}(1+t)^{-1+\frac{1}{2p}-\frac{l}{2}}, \ \ t\ge1, \ \ 0\le l \le s-1.
\end{equation}

Next, we deal with $I_{3}$. In the following, let $0\le l \le s-2$. By using \eqref{5-10} and \eqref{5-11}, we have
\begin{align}
\begin{split}\label{5-18}
\|\p_{x}^{l}I_{3}(\cdot, t)\|_{L^{p}}&\le C\sum_{j=0}^{l+1}(1+t)^{-\frac{1}{2}(l+1-j)}\int_{0}^{t}\|\p_{x}^{j}J[(b^{2}-\p_{x}^{2})^{-1}\p_{x}^{2}\psi](\cdot, t, \tau)\|_{L^{p}}d\tau\\
&\le C\sum_{j=0}^{l+1}(1+t)^{-\frac{1}{2}(l+1-j)}\left(\int_{0}^{t/2}+\int_{t/2}^{t}\right)\|\p_{x}^{j}J[(b^{2}-\p_{x}^{2})^{-1}\p_{x}^{2}\psi](\cdot, t, \tau)\|_{L^{p}}d\tau\\
&=:C\sum_{j=0}^{l+1}(1+t)^{-\frac{1}{2}(l+1-j)}(I_{3.1}+I_{3.2}).
\end{split}
\end{align}
By making the integration by parts, it follows that 
\begin{align}
\begin{split}\label{5-19}
J[(b^{2}-\p_{x}^{2})^{-1}\p_{x}^{2}\psi](x, t, \tau)&=\int_{\R}G_{0}(x-y, t-\tau)(\eta(y, \tau))^{-1}(b^{2}-\p_{y}^{2})^{-1}\p_{y}^{2}\psi(y, \tau)dy\\
&=-\int_{\R}\p_{y}\left(G_{0}(x-y, t-\tau)(\eta(y, \tau))^{-1}\right)(b^{2}-\p_{y}^{2})^{-1}\p_{y}\psi(y, \tau)dy\\
&=\int_{\R}\p_{x}G_{0}(x-y, t-\tau)(\eta(y, \tau))^{-1}(b^{2}-\p_{y}^{2})^{-1}\p_{y}\psi(y, \tau)dy\\
&\ \ \ \ -\int_{\R}G_{0}(x-y, t-\tau)\p_{y}(\eta(y, \tau)^{-1})(b^{2}-\p_{y}^{2})^{-1}\p_{y}\psi(y, \tau)dy.
\end{split}
\end{align}
Therefore, from \eqref{5-19}, Young's inequality, \eqref{eta-est2}, \eqref{heat-decay1}, Lemma~\ref{lem.est-dispersion}, Schwarz inequality, Theorem~\ref{thm.main1} and Lemma~\ref{lem.eta-decay}, we obtain
\begin{align}
\begin{split}\label{5-20}
I_{3.1}&\le \int_{0}^{t/2}\|\p_{x}^{j+1}G_{0}(\cdot, t-\tau)\|_{L^{q}}\|(b^{2}-\p_{x}^{2})^{-1}\p_{x}\psi(\cdot, \tau)\|_{L^{2}} \ \ \left(\frac{1}{p}+1=\frac{1}{q}+\frac{1}{2}\right) \\
&\ \ \ +\int_{0}^{t/2}\|\p_{x}^{j}G_{0}(\cdot, t-\tau)\|_{L^{p}}\|\p_{x}(\eta(\cdot, \tau)^{-1})(b^{2}-\p_{x}^{2})^{-1}\p_{x}\psi(\cdot, \tau)\|_{L^{1}} \\
&\le \int_{0}^{t/2}\|\p_{x}^{j+1}G_{0}(\cdot, t-\tau)\|_{L^{q}}\|\p_{x}\psi(\cdot, \tau)\|_{L^{2}} \\
&\ \ \ +\int_{0}^{t/2}\|\p_{x}^{j}G_{0}(\cdot, t-\tau)\|_{L^{p}}\|\p_{x}(\eta(\cdot, \tau)^{-1})\|_{L^{2}}\|\p_{x}\psi(\cdot, \tau)\|_{L^{2}} \\
&\le CE_{s, 1}\int_{0}^{t/2}(t-\tau)^{-\frac{1}{2}+\frac{1}{2}\left(\frac{1}{p}+\frac{1}{2}\right)-\frac{j+1}{2}}(1+\tau)^{-\frac{5}{4}+\e}d\tau \\
&\ \ \ +CE_{s, 1}\int_{0}^{t/2}(t-\tau)^{-\frac{1}{2}+\frac{1}{2p}-\frac{j}{2}}(1+\tau)^{-\frac{1}{4}}(1+\tau)^{-\frac{5}{4}+\e}d\tau\\
&\le CE_{s, 1}(1+t)^{-\frac{1}{2}+\frac{1}{2p}-\frac{j}{2}}, \ \ t\ge1.
\end{split}
\end{align}
For $i=0, 1$, by using \eqref{eta-est2}, Lemma~\ref{lem.eta-decay}, Lemma~\ref{lem.est-dispersion}, Theorem~\ref{GE} and Lemma~\ref{lem.chi-decay}, we get
\begin{align}
\begin{split}\label{5-21}
&\|\p_{x}^{j}\left(\p_{x}^{i}(\eta(\cdot, \tau)^{-1})(b^{2}-\p_{x}^{2})^{-1}\p_{x}\psi(\cdot, \tau)\right)\|_{L^{p}} \\
&\le C\sum_{n=0}^{j}\|\p_{x}^{j+i-n}(\eta(\cdot, \tau)^{-1})\|_{L^{\infty}}\|\p_{x}^{n+1}\psi(\cdot, \tau)\|_{L^{p}}\\
&\le C\sum_{n=0}^{j}(1+\tau)^{-\frac{1}{2}(j+i-n)}\left(\|\p_{x}^{n+1}u(\cdot, \tau)\|_{L^{p}}+\|\p_{x}^{n+1}\chi(\cdot, \tau)\|_{L^{p}}\right)\\
&\le CE_{s, 0}\sum_{n=0}^{j}(1+\tau)^{-\frac{1}{2}(j+i-n)}(1+\tau)^{-1+\frac{1}{2p}-\frac{n}{2}} \\
&\le CE_{s, 0}(1+\tau)^{-1+\frac{1}{2p}-\frac{1}{2}(j+i)}. 
\end{split}
\end{align}
Thus, from \eqref{5-19}, Young's inequality, \eqref{heat-decay1} and \eqref{5-21}, we have
\begin{align}
\begin{split}\label{5-22}
I_{3.2}&\le \int_{t/2}^{t}\|\p_{x}G_{0}(\cdot, t-\tau)\|_{L^{1}}\|\p_{x}^{j}\left((\eta(\cdot, \tau))^{-1}(b^{2}-\p_{x}^{2})^{-1}\p_{x}\psi(\cdot, \tau)\right)\|_{L^{p}}d\tau \\
&\ \ \ +\int_{t/2}^{t}\|G_{0}(\cdot, t-\tau)\|_{L^{1}}\|\p_{x}^{j}\left(\p_{x}(\eta(\cdot, \tau)^{-1})(b^{2}-\p_{x}^{2})^{-1}\p_{x}\psi(\cdot, \tau)\right)\|_{L^{p}}d\tau \\
&\le CE_{s, 0}\int_{t/2}^{t}(t-\tau)^{-\frac{1}{2}}(1+\tau)^{-1+\frac{1}{2p}-\frac{j}{2}}d\tau+CE_{s, 0}\int_{t/2}^{t}(1+\tau)^{-\frac{3}{2}+\frac{1}{2p}-\frac{j}{2}}d\tau\\
&\le CE_{s, 0}(1+t)^{-\frac{1}{2}+\frac{1}{2p}-\frac{j}{2}}, \ \ t\ge0.
\end{split}
\end{align}
Summing up \eqref{5-18}, \eqref{5-20} and \eqref{5-22}, we get 
\begin{equation}\label{5-23}
\|\p_{x}^{l}I_{3}(\cdot, t)\|_{L^{p}}\le CE_{s, 1}(1+t)^{-1+\frac{1}{2p}-\frac{l}{2}}, \ \ t\ge1, \ \ 0\le l\le s-2.
\end{equation}

Finally, we shall evaluate $I_{4}$. In the same way to get \eqref{5-21}, we obtain from Lemma~\ref{lem.chi-decay}
\begin{equation*}
\|\p_{x}^{j}\left((\eta(\cdot, \tau))^{-1}(b^{2}-\p_{x}^{2})^{-1}\p_{x}^{4}\chi(\cdot, \tau)\right)\|_{L^{p}}\le C|M|(1+\tau)^{-\frac{5}{2}+\frac{1}{2p}-\frac{j}{2}}. 
\end{equation*}
Therefore, by using the same argument given in the above paragraph, we have the following estimate: 
\begin{align}
\begin{split}\label{5-24}
\|\p_{x}^{l}I_{4}(\cdot, t)\|_{L^{p}}
&\le C\sum_{j=0}^{l+1}(1+t)^{-\frac{1}{2}(l+1-j)}\int_{0}^{t}\|\p_{x}^{j}J[(b^{2}-\p_{x}^{2})^{-1}\p_{x}^{4}\chi](\cdot, t, \tau)\|_{L^{p}}d\tau\\
&\le C\sum_{j=0}^{l+1}(1+t)^{-\frac{1}{2}(l+1-j)}\left(\int_{0}^{t/2}+\int_{t/2}^{t}\right)\|\p_{x}^{j}J[(b^{2}-\p_{x}^{2})^{-1}\p_{x}^{4}\chi](\cdot, t, \tau)\|_{L^{p}}d\tau\\
&\le C\sum_{j=0}^{l+1}(1+t)^{-\frac{1}{2}(l+1-j)}\biggl(\int_{0}^{t/2}\|\p_{x}^{j}G_{0}(\cdot, t-\tau)\|_{L^{p}}\|(b^{2}-\p_{x}^{2})^{-1}\p_{x}^{4}\chi(\cdot, \tau)\|_{L^{1}}d\tau\\
&\ \ \ \ +\int_{t/2}^{t}\|G_{0}(\cdot, t-\tau)\|_{L^{1}}\|\p_{x}^{j}\left((\eta(\cdot, \tau))^{-1}(b^{2}-\p_{x}^{2})^{-1}\p_{x}^{4}\chi(\cdot, \tau)\right)\|_{L^{p}}d\tau \biggl) \\
&\le C|M|\sum_{j=0}^{l+1}(1+t)^{-\frac{1}{2}(l+1-j)}\\
&\ \ \ \ \times \biggl(\int_{0}^{t/2}(t-\tau)^{-\frac{1}{2}+\frac{1}{2p}-\frac{j}{2}}(1+\tau)^{-2}d\tau+\int_{t/2}^{t}(1+\tau)^{-\frac{5}{2}+\frac{1}{2p}-\frac{j}{2}}d\tau\biggl) \\
&\le C|M|(1+t)^{-1+\frac{1}{2p}-\frac{l}{2}}, \ \ t\ge1.
\end{split}
\end{align}

Summing up \eqref{w.eq3}, \eqref{5-9}, \eqref{5-17}, \eqref{5-23} and \eqref{5-24}, we finally arrive at the desired estimate \eqref{asymp2}. This completes the proof. 
\end{proof}

\begin{proof}[\rm{\bf{Proof of Theorem~\ref{thm.main2}}}]
Summing up Proposition~\ref{prop.second1} and Proposition~\ref{prop.second2}, we immediately have \eqref{main2}. This completes the proof. 
\end{proof}

\section{Third Asymptotic Profile}

Finally in this section, we shall prove that the third asymptotic profile of the solution to \eqref{VFW} is given by $Q(x, t)$ defined by \eqref{third}. Namely, we give the proof of Theorem~\ref{thm.main3}. First, we reconsider perturbation problem for $w(x, t)$ defined by \eqref{w.eq1}:
\begin{align}\tag{\ref{w.eq2}}
\begin{split}
&w_{t}+\alpha w_{x}+(\beta \chi w)_{x}-\mu w_{xx}\\
&=-\alpha(b^{2}-\p_{x}^{2})^{-1}\p_{x}^{3}\psi-\frac{\alpha}{b^{2}}(b^{2}-\p_{x}^{2})^{-1}\p_{x}^{5}\chi-\biggl(\frac{\beta}{2}\psi^{2}\biggl)_{x}, \ \ x\in \R, \ t>0,\\
&w(x, 0)=u_{0}(x)-\chi(x, 0)=\psi_{0}(x), \ \ x\in \R.  
\end{split}
\end{align}
For this problem, from the definition of $\rho(x, t)$ by \eqref{rho} and $U$ by \eqref{U}, we can rewrite \eqref{w.eq2} as the following integral equation: 
\begin{align}\label{D}
\begin{split}
w(x, t)&=U[\psi_{0}](x, t, 0)-\frac{\beta}{2}\int_{0}^{t}U[\p_{x}(\psi^{2})(\tau)](x, t, \tau)d\tau \\
&\ \ \ -\alpha\int_{0}^{t}U\left[(b^{2}-\p_{x}^{2})^{-1}\p_{x}^{3}\psi(\tau)\right](x, t, \tau)d\tau-\frac{\alpha}{b^{2}}\int_{0}^{t}U\left[(b^{2}-\p_{x}^{2})^{-1}\p_{x}^{5}\chi(\tau)\right](x, t, \tau)d\tau\\
&=U[\psi_{0}](x, t, 0)+\int_{0}^{t}\int_{\R}\p_{x}(G_{0}(x-y, t-\tau)\eta(x, t))\rho(y, \tau)dyd\tau \\
&=:U[\psi_{0}](x, t, 0)+D(x, t).
\end{split}
\end{align} 

Our first step to prove Theorem~\ref{thm.main3} is to derive the following lemma: 
\begin{lem}
\label{lem.third-L}
Let $l$ be a non-negative integer and $p\in [1, \infty]$. Suppose $z_{0}\in L^{1}_{1}(\R)$. Then, we have
\begin{equation}
\label{third-L}
\|\p_{x}^{l}(U[\psi_{0}](\cdot, t, 0)-\theta_{0}\p_{x}(G_{0}(\cdot, 1+t)\eta(\cdot, t)))\|_{L^{p}}\le C\|z_{0}\|_{L^{1}_{1}}(1+t)^{-\frac{3}{2}+\frac{1}{2p}-\frac{l}{2}}, \ t\ge1,
\end{equation}
where $\displaystyle \theta_{0}:=\int_{\R}z_{0}(x)dx$ with $z_{0}(x)$ being defined by \eqref{initial}. 
\end{lem}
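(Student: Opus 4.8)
The plan is to analyze the linear part $U[\psi_0](x,t,0)$ of the integral equation \eqref{D} by expanding the Gaussian kernel $G_0$ around the ``frozen'' time $1+t$, in the spirit of the standard first-order asymptotic expansion for the heat semigroup but adapted to the variable-coefficient form of $U$. Recall from \eqref{U} that
\[
U[\psi_0](x,t,0)=\int_{\R}\p_x\bigl(G_0(x-y,t)\eta(x,t)\bigr)(\eta(y,0))^{-1}\Bigl(\int_{-\infty}^{y}\psi_0(\xi)d\xi\Bigr)dy
=\p_x\!\left(\eta(x,t)\int_{\R}G_0(x-y,t)z_0(y)dy\right),
\]
using that $z_0(y)=\eta(y,0)^{-1}\int_{-\infty}^{y}\psi_0$ by \eqref{initial} (note $\eta(y,0)=\eta_*((y-\alpha)/1)$, and here I use $\eta(y,0)$ in place of the $\eta_*$ appearing in the definition, which is consistent once $\alpha=2B/b$ is fixed as elsewhere). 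Hence the claim reduces to estimating
\[
\p_x^l\,\p_x\!\left(\eta(x,t)\bigl(G_0(t)*z_0 - \theta_0\,G_0(1+t)\bigr)(x)\right),
\]
and since $\eta$ and all its derivatives are bounded (by \eqref{eta-est1} and Lemma~\ref{lem.eta-decay}), it suffices by the Leibniz rule to control $\|\p_x^m(G_0(t)*z_0-\theta_0 G_0(1+t))\|_{L^p}$ for $0\le m\le l+1$, absorbing each derivative that falls on $\eta$ at the cost of a harmless factor $(1+t)^{-1/2}$ (which only improves the bound).

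The core estimate is therefore: for $z_0\in L^1_1(\R)$ with $\theta_0=\int z_0$,
\[
\bigl\|\p_x^m\bigl(G_0(t)*z_0-\theta_0 G_0(1+t)\bigr)\bigr\|_{L^p}\le C\|z_0\|_{L^1_1}(1+t)^{-1+\frac{1}{2p}-\frac{m}{2}}\cdot(1+t)^{-1/2},
\]
i.e. decay rate $(1+t)^{-3/2+\frac1{2p}-\frac m2}$. To get this I would split $z_0 = \theta_0\delta$-part plus mean-zero part: write
\[
G_0(t)*z_0-\theta_0 G_0(1+t) = \bigl(G_0(t)*z_0-\theta_0 G_0(t)\bigr)+\theta_0\bigl(G_0(t)-G_0(1+t)\bigr).
\]
The first bracket is $G_0(t)*(z_0-\theta_0\delta)$... more precisely $G_0(t)*z_0-\theta_0G_0(t)=G_0(t)*\phi$ where $\phi:=z_0-\theta_0\,\delta_0$ is a distribution with zero total mass; rather than use $\delta$, it is cleaner to write $z_0(y)-\theta_0 G_0(0^+,\cdot)$ — instead I would simply observe $\int(z_0(y)-\theta_0 g_\e(y))dy=0$ and take a limit, or better, directly estimate $G_0(t)*z_0-\theta_0 G_0(1+t)$ by the Taylor-with-remainder identity
\[
(G_0(t)*z_0)(x)-\theta_0 G_0(x,1+t)=\int_\R\bigl(G_0(x-y,t)-G_0(x,1+t)\bigr)z_0(y)\,dy.
\]
Now expand $G_0(x-y,t)-G_0(x,1+t)$: first replace $t$ by $1+t$ in the heat time (the difference $G_0(\cdot,t)-G_0(\cdot,1+t)=\int_t^{1+t}\p_s G_0(\cdot,s)ds$ contributes an extra derivative, gaining $(1+t)^{-1}$ after using $\p_s G_0=\mu\p_x^2 G_0$), and then Taylor-expand $G_0(x-y,1+t)=G_0(x,1+t)-y\,\p_x G_0(x,\cdot)|_{1+t}+\cdots$; the zeroth term integrates against $\int z_0=\theta_0$ and cancels, the first-order remainder is $O(|y|)$ against the $L^1_1$ norm and carries $\|\p_x G_0(1+t)\|$, yielding exactly the extra $(1+t)^{-1/2}$. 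This is precisely estimate \eqref{heat-decay2} of Lemma~\ref{lem.heat-decay} applied to the mean-zero function $z_0-\theta_0\varphi$ for a suitable smooth mass-one $\varphi$, combined with the identity $\theta_0(G_0(t)*\varphi-G_0(1+t))$ which is again handled by the time-shift argument; so in fact the whole thing follows by a clean application of Lemma~\ref{lem.heat-decay}\,\eqref{heat-decay2} plus Lemma~\ref{lem.heat-decay}\,\eqref{heat-decay1} for the time-translation error, once one writes $U[\psi_0]$ in the factored form above.

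The main obstacle I anticipate is purely bookkeeping rather than conceptual: one must carefully extract the factor $\eta(x,t)$ out of the $y$-integral in \eqref{U} (legitimate because $\p_x$ acts in $x$ and $\eta$ depends only on $x$), correctly identify the resulting convolution kernel as $z_0$ from definition \eqref{initial}, and then track how the $\p_x^{l+1}$ derivatives distribute between $\eta(\cdot,t)$ and the bracket $G_0(t)*z_0-\theta_0G_0(1+t)$ via the Leibniz rule — each derivative on $\eta$ gives a factor decaying like $(1+t)^{-1/2}$ by Lemma~\ref{lem.eta-decay}, so those terms are strictly better than the claimed rate and can be discarded. The only genuinely analytic input is the mean-zero heat estimate \eqref{heat-decay2} together with the elementary time-shift bound $\|\p_x^m(G_0(t)-G_0(1+t))\|_{L^p}=\|\int_t^{1+t}\mu\p_x^{m+2}G_0(s)ds\|_{L^p}\le C t^{-1+\frac1{2p}-\frac m2}\cdot t^{-1}$ for $t\ge1$, both of which are already available. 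Combining these gives \eqref{third-L}, completing the proof.
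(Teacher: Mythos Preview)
Your approach is essentially the paper's: factor out $\eta(x,t)$ to write $U[\psi_0](\cdot,t,0)=\p_x\bigl(\eta(\cdot,t)\,(G_0(t)*z_0)\bigr)$, expand $\p_x^{l+1}$ by Leibniz, and split $G_0(t)*z_0-\theta_0 G_0(\cdot,1+t)$ through the intermediate $\theta_0 G_0(\cdot,t)$ into a spatial mean-zero piece and a time-shift piece, each handled by the mean value theorem together with Lemma~\ref{lem.heat-decay} and Lemma~\ref{lem.eta-decay}.

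There is, however, a bookkeeping slip you should fix. Your claimed ``core estimate''
\[
\bigl\|\p_x^m\bigl(G_0(t)*z_0-\theta_0 G_0(\cdot,1+t)\bigr)\bigr\|_{L^p}\le C\|z_0\|_{L^1_1}(1+t)^{-\frac{3}{2}+\frac{1}{2p}-\frac{m}{2}}
\]
is too strong by $(1+t)^{-1/2}$: both pieces give only the rate $(1+t)^{-1+\frac{1}{2p}-\frac{m}{2}}$. The source of the error is the identity $\p_s G_0=\mu\p_x^2 G_0$, which is false for the \emph{drifting} kernel $G_0$ of \eqref{heat}; the correct relation is $\p_s G_0=\mu\p_x^2 G_0-\alpha\p_x G_0$, so a time derivative on $G_0$ costs only one spatial derivative, exactly as recorded in Lemma~\ref{lem.heat-decay}\,\eqref{heat-decay1} with $k=1$. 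Likewise \eqref{heat-decay2} yields only the extra $(1+t)^{-1/2}$ on top of the base rate $t^{-\frac12+\frac1{2p}-\frac m2}$. Fortunately the miscount is exactly compensated by the derivative accounting you glossed over: you are applying $\p_x^{l+1}$ (not $\p_x^l$) to the product $\eta\cdot(\text{bracket})$, so the Leibniz term with all $l+1$ derivatives on the bracket already gives $(1+t)^{-1+\frac{1}{2p}-\frac{l+1}{2}}=(1+t)^{-\frac32+\frac{1}{2p}-\frac{l}{2}}$, and the remaining terms (with at least one derivative on $\eta$) match this rate rather than improving on it. Once you correct the bracket rate to $-1+\tfrac{1}{2p}-\tfrac m2$ and redo the Leibniz sum, your proof coincides with the paper's.
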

\begin{proof}
From the definition of $U$ given by \eqref{U} and $z_{0}(x)$ given by \eqref{initial}, we have 
\begin{align*}
\begin{split}
U[\psi_{0}](x, t, 0)=&\int_{\R}\p_{x}(G_{0}(x-y, t)\eta(x, t))\eta(y, 0)^{-1}\biggl(\int_{-\infty}^{y}(u_{0}(\xi)-\chi(\xi, 0))d\xi\biggl) dy\\
=&\int_{\R}\p_{x}(G_{0}(x-y, t)\eta(x, t))z_{0}(y)dy. 
\end{split}
\end{align*} 
Therefore, from the mean value theorem, there exist $c_{0}, c_{1}\in (0, 1)$ such that
\begin{align*}
\begin{split}
&\p_{x}^{l}(U[\psi_{0}](x, t, 0)-\theta_{0}\p_{x}(G_{0}(x, 1+t)\eta(x, t)))\\
&=\p_{x}^{l}U[\psi_{0}](x, t, 0)-\theta_{0}\p_{x}^{l+1}(G_{0}(x, 1+t)\eta(x, t))\\
&=\sum_{j=0}^{l+1}\begin{pmatrix} l+1 \\ j\end{pmatrix}\p_{x}^{l+1-j}\eta(x, t)\int_{\R}\p_{x}^{j}G_{0}(x-y, t)z_{0}(y)dy-\theta_{0}\sum_{j=0}^{l+1}\begin{pmatrix} l+1 \\ j\end{pmatrix}\p_{x}^{l+1-j}\eta(x, t)\p_{x}^{j}G_{0}(x, 1+t)\\
&=\sum_{j=0}^{l+1}\begin{pmatrix} l+1 \\ j\end{pmatrix}\p_{x}^{l+1-j}\eta(x, t)\int_{\R}\p_{x}^{j}G_{0}(x-y, t)z_{0}(y)dy-\theta_{0}\sum_{j=0}^{l+1}\begin{pmatrix} l+1 \\ j\end{pmatrix}\p_{x}^{l+1-j}\eta(x, t)\p_{x}^{j}G_{0}(x, t)\\
&\ \ \ \ +\theta_{0}\sum_{j=0}^{l+1}\begin{pmatrix} l+1 \\ j\end{pmatrix}\p_{x}^{l+1-j}\eta(x, t)\p_{x}^{j}G_{0}(x, t)-\theta_{0}\sum_{j=0}^{l+1}\begin{pmatrix} l+1 \\ j\end{pmatrix}\p_{x}^{l+1-j}\eta(x, t)\p_{x}^{j}G_{0}(x, 1+t)\\
&=\sum_{j=0}^{l+1}\begin{pmatrix} l+1 \\ j\end{pmatrix}\p_{x}^{l+1-j}\eta(x, t)\int_{\R}(\p_{x}^{j}G_{0}(x-y, t)-\p_{x}^{j}G_{0}(x, t))z_{0}(y)dy\\
&\ \ \ \ +\theta_{0}\sum_{j=0}^{l+1}\begin{pmatrix} l+1 \\ j\end{pmatrix}\p_{x}^{l+1-j}\eta(x, t)(\p_{x}^{j}G_{0}(x, t)-\p_{x}^{j}G_{0}(x, 1+t))\\
&=\sum_{j=0}^{l+1}\begin{pmatrix} l+1 \\ j\end{pmatrix}\p_{x}^{l+1-j}\eta(x, t)\int_{\R}\p_{x}^{j+1}G_{0}(x-c_{0}y, t) (-y) z_{0}(y)dy\\
&\ \ \ \ +\theta_{0}\sum_{j=0}^{l+1}\begin{pmatrix} l+1 \\ j\end{pmatrix}\p_{x}^{l+1-j}\eta(x, t)\p_{x}^{j}\p_{t}G_{0}(x, t+c_{1}). 
\end{split}
\end{align*}
Finally, applying Lemma~\ref{lem.eta-decay}, \eqref{eta-est1} and Lemma~\ref{lem.heat-decay}, we obtain 
\begin{align*}
&\|\p_{x}^{l}(U[\psi_{0}](\cdot, t, 0)-\theta_{0}\p_{x}(G_{0}(\cdot, 1+t)\eta(\cdot, t)))\|_{L^{p}} \\
&\le C\sum_{j=0}^{l+1}(1+t)^{-\frac{1}{2}(l+1-j)}\int_{\R}\|\p_{x}^{j+1}G_{0}(\cdot-c_{0}y, t)\|_{L^{p}}(1+|y|)|z_{0}(y)|dy \\
&\ \ \ \ +C\|z_{0}\|_{L^{1}}\sum_{j=0}^{l+1}(1+t)^{-\frac{1}{2}(l+1-j)}\|\p_{x}^{j}\p_{t}G_{0}(\cdot, t+c_{1})\|_{L^{p}}\\
&\le C\|z_{0}\|_{L^{1}_{1}}\sum_{j=0}^{l+1}(1+t)^{-\frac{1}{2}(l+1-j)}t^{-\frac{1}{2}+\frac{1}{2p}-\frac{j+1}{2}}\\
&\ \ \ \ +C\|z_{0}\|_{L^{1}}\sum_{j=0}^{l+1}(1+t)^{-\frac{1}{2}(l+1-j)}t^{-\frac{1}{2}+\frac{1}{2p}-\frac{j}{2}-\frac{1}{2}} \\
&\le C\|z_{0}\|_{L^{1}_{1}}(1+t)^{-\frac{3}{2}+\frac{1}{2p}-\frac{l}{2}}, \ t\ge1.
\end{align*}
This completes the proof. 
\end{proof}

Next, for the Duhamel term $D(x, t)$ of \eqref{D}, we have the following asymptotic relation: 
\begin{lem}
\label{lem.third-D}
Let $s\ge3$. Assume that $u_{0}\in L^{1}_{1}(\R)\cap H^{s}(\R)$, $z_{0}\in L^{1}_{1}(\R)$ and $E_{s, 0}$ is sufficiently small. Then, the estimate 
\begin{equation}
\label{third-D}
\lim_{t\to \infty}(1+t)^{1-\frac{1}{2p}+\frac{l}{2}}\|\p_{x}^{l}(D(\cdot, t)-\theta_{1}\p_{x}(G_{0}(\cdot, 1+t)\eta(\cdot, t)))\|_{L^{p}}=0
\end{equation}
holds for any $p\in [2, \infty]$ and integer $0\le l\le s-3$, where $D(x, t)$ is defined by \eqref{D}, while $\displaystyle \theta_{1}:=\int_{0}^{\infty}\int_{\R}\rho(x, t)dxdt$ with $\rho(x, t)$ defined by \eqref{rho}. 
\end{lem}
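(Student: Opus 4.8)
The strategy is to analyze the Duhamel term
\[
D(x,t)=\int_{0}^{t}\int_{\R}\p_{x}\bigl(G_{0}(x-y,t-\tau)\eta(x,t)\bigr)\rho(y,\tau)\,dy\,d\tau
\]
by comparing it, as $t\to\infty$, to the profile $\theta_{1}\p_{x}(G_{0}(\cdot,1+t)\eta(\cdot,t))$ with $\theta_{1}=\int_{0}^{\infty}\int_{\R}\rho(x,\tau)\,dxd\tau$. First I would verify that $\theta_{1}$ is well defined, i.e.\ that $\rho(\cdot,\tau)\in L^{1}(\R)$ with $\|\rho(\cdot,\tau)\|_{L^{1}}$ integrable in $\tau$ over $(0,\infty)$: using \eqref{rho}, \eqref{eta-est2}, Lemma~\ref{lem.est-dispersion} to kill the nonlocal operators $(b^{2}-\p_{x}^{2})^{-1}\p_{x}^{2}$, together with Theorem~\ref{thm.main1} for $\psi=u-\chi$ and Lemma~\ref{lem.chi-decay} for the $\p_{x}^{4}\chi$ term, each piece of $\rho$ decays at an integrable rate (the $\psi^{2}$ term like $(1+\tau)^{-3/2+2\e}$, the linear-in-$\psi$ term like $(1+\tau)^{-3/2+\e}$ after using $\|\p_x^2\psi\|_{L^1}\le C\|\p_x\psi\|_{L^2}$ via Schwarz and the weight, and the $\chi$ term like $(1+\tau)^{-5/2+\frac{1}{2p}}$). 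This also gives the tail bound $\int_{t/2}^{\infty}\|\rho(\cdot,\tau)\|_{L^{1}}\,d\tau\to0$.

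Next I would split the time integral at $\tau=t/2$ and handle the three resulting discrepancies. \textbf{(i) Large-$\tau$ part:} on $[t/2,t]$, bound $\p_x^l D$ crudely by putting all derivatives and the $\eta$-factor onto $\p_x G_0$ and its weighted $\eta^{-1}$-kernel (as in the $I_{3.2}$-type estimates of Proposition~\ref{prop.second2}); since $\int_{t/2}^{t}\|\rho(\cdot,\tau)\|_{L^{1}}\,d\tau=o(1)$ by the tail bound, this contributes $o((1+t)^{-1+\frac{1}{2p}-\frac{l}{2}})$, and the same tail estimate shows that replacing $\theta_{1}$ by $\int_{0}^{t/2}\int_{\R}\rho$ in the profile costs only $o$ of the target rate after multiplying by $\|\p_x^{l+1}(G_0(\cdot,1+t)\eta(\cdot,t))\|_{L^p}\le C(1+t)^{-1+\frac{1}{2p}-\frac{l}{2}}$ (from Lemma~\ref{lem.heat-decay} and Lemma~\ref{lem.eta-decay}). \textbf{(ii) Small-$\tau$ part, profile replacement:} for $\tau\in[0,t/2]$ write
\[
\int_{\R}\p_{x}\bigl(G_{0}(x-y,t-\tau)\eta(x,t)\bigr)\rho(y,\tau)\,dy
-\Bigl(\textstyle\int_{\R}\rho(y,\tau)\,dy\Bigr)\p_{x}\bigl(G_{0}(x,1+t)\eta(x,t)\bigr),
\]
expand via Leibniz, and estimate the difference of kernels using the mean value theorem in both the space shift ($G_0(x-y,t-\tau)-G_0(x,t-\tau)$, producing a factor $|y|$ absorbed by $\rho\in L^1_1$-type control coming from the $(1+|x|)$-weight implicit in $z_0\in L^1_1$ and the structure of $\rho$) and the time shift ($G_0(x,t-\tau)-G_0(x,1+t)$, producing $\p_t G_0$ evaluated at an intermediate time, hence an extra $(1+\tau)$-gain in the $\tau$-integral). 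The heat-kernel estimates \eqref{heat-decay1}–\eqref{heat-decay2} then give a bound of the form $C(1+t)^{-1+\frac{1}{2p}-\frac{l}{2}}\int_{0}^{t/2}(1+\tau)^{-1-\delta}\,d\tau\cdot\sup\|\cdot\|$, uniformly convergent, and multiplying by $(1+t)^{1-\frac{1}{2p}+\frac{l}{2}}$ and sending $t\to\infty$ yields $0$ by dominated convergence once one checks the integrand of $\theta_1$ is absolutely integrable.

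\textbf{The main obstacle} I anticipate is controlling the linear nonlocal terms $(b^{2}-\p_{x}^{2})^{-1}\p_{x}^{2}(u-\chi)$ and $(b^{2}-\p_{x}^{2})^{-1}\p_{x}^{4}\chi$ inside $\rho$ with enough spatial-weight regularity to legitimately apply the mean-value-theorem argument in (ii): one needs these pieces of $\rho$ to lie in $L^1_1(\R)$ uniformly enough in $\tau$, which requires combining Lemma~\ref{lem.est-dispersion} with the weighted estimates on $\psi$ and $\eta$ and does not follow from Theorem~\ref{thm.main1} alone. I would handle this by first integrating by parts to move one derivative off $(b^2-\p_x^2)^{-1}\p_x^2\psi$ onto the Gaussian kernel (exactly as in \eqref{5-19}), so that only $(b^2-\p_x^2)^{-1}\p_x\psi$—which is controlled in $L^2$ by $\|\p_x\psi\|_{L^2}$ via Lemma~\ref{lem.est-dispersion}—appears, and similarly for the $\chi$ term, at the cost of an extra $\p_x\eta^{-1}$ term handled by Lemma~\ref{lem.eta-decay}; the weight $|y|$ from the mean value theorem is then absorbed against the exponential decay of $\chi$ and the $L^1_1$-control on $\psi$ that follows from $u_0,\,z_0\in L^1_1$. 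Once the weighted bounds are in place, the remainder of the proof is a routine (if lengthy) convolution estimate of the same flavour as Proposition~\ref{prop.second2}.
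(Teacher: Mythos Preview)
Your overall decomposition is reasonable and your well-definedness argument for $\theta_{1}$ (including the integration-by-parts trick for the nonlocal term, as in \eqref{5-19}) matches the paper. However, the small-$\tau$ kernel comparison in step~(ii) has two genuine gaps.

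\emph{First}, the time-shift estimate cannot be done with $\p_{t}G_{0}$. By Lemma~\ref{lem.heat-decay}, one time derivative of $G_{0}$ gains only $t^{-1/2}$ (because of the drift: $\p_{t}G_{0}=\mu\p_{x}^{2}G_{0}-\alpha\p_{x}G_{0}$). With the mean value theorem on $G_{0}(x,t-\tau)-G_{0}(x,1+t)$ you pick up a factor $(1+\tau)\|\p_{t}G_{0}(\cdot,t^{*})\|_{L^{p}}\le C(1+\tau)t^{-1+\frac{1}{2p}-\frac{j}{2}}$; integrating against $\|\rho(\cdot,\tau)\|_{L^{1}}\le C(1+\tau)^{-3/2}\log(1+\tau)$ over $[0,t/2]$ gives a contribution $\sim t^{-1/2+\frac{1}{2p}-\frac{j}{2}}\log t$, which after the $\eta$-weights and multiplication by $(1+t)^{1-\frac{1}{2p}+\frac{l}{2}}$ \emph{diverges} like $\log t$. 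The paper fixes this by first shifting variables $y\mapsto y+\alpha(1+\tau)$ so that the kernels become the \emph{centered} heat kernel $G$; then $\p_{t}G=\mu\p_{x}^{2}G$ and one gains a full power $t^{-1}$ (see \eqref{est.Gauss} and the remark preceding it). This change of variable is not a cosmetic step---it is essential.

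\emph{Second}, your proposed absorption of the space-shift factor $|y|$ into ``$L^{1}_{1}$-control on $\psi$ that follows from $u_{0},z_{0}\in L^{1}_{1}$'' is not available: no weighted estimate on $\psi(\cdot,t)$ for $t>0$ is proved anywhere in the paper (Theorem~\ref{thm.main1} gives only $L^{p}$ for $p\ge2$), and propagating $L^{1}_{1}$ through \eqref{psi.eq} with the nonlocal term is a nontrivial separate task. The paper sidesteps this entirely: instead of requiring $|y|\rho\in L^{1}$, it introduces a small parameter $\e$, splits at $\tau=\e t/2$ and $|y|=\e\sqrt{t}$, and on the region $\{|y|\le\e\sqrt{t},\ \tau\le\e t/2\}$ bounds the mean-value increments by $C\e\, t^{-1/2+\frac{1}{2p}-\frac{j}{2}}+Ct^{-3/2+\frac{1}{2p}-\frac{j}{2}}$ (using only $\int_{0}^{\infty}\!\int_{\R}|\rho|<\infty$); the complementary regions are handled by the tail smallness of $\rho$ and dominated convergence. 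After normalizing, the $\limsup$ is $\le C\e$, hence zero. You should replace the $L^{1}_{1}$ strategy by this $\e$-splitting together with the change of variable to the centered kernel $G$.
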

\begin{proof}
First, we shall check the well-definedness of $\theta_{1}$. Recalling the definition of $\rho(x, t)$ and $\psi(x, t)=u(x, t)-\chi(x, t)$, and using the integration by parts for the second term, we have
\begin{align}\label{theta1}
\begin{split}
\theta_{1}&=-\int_{0}^{\infty}\int_{\R}\eta(x, t)^{-1}\left(\frac{\beta}{2}\psi^{2}+\frac{2B}{b}(b^{2}-\p_{x}^{2})^{-1}\p_{x}^{2}\psi+\frac{2B}{b^{3}}(b^{2}-\p_{x}^{2})^{-1}\p_{x}^{4}\chi\right)(x, t)dxdt\\
&=-\int_{0}^{\infty}\int_{\R}\eta(x, t)^{-1}\left(\frac{\beta}{2}\psi^{2}+\frac{\beta B}{b}\chi(b^{2}-\p_{x}^{2})^{-1}\p_{x}\psi+\frac{2B}{b^{3}}(b^{2}-\p_{x}^{2})^{-1}\p_{x}^{4}\chi\right)(x, t)dxdt. 
\end{split}
\end{align}
Therefore, from \eqref{eta-est2}, \eqref{psi-L2}, \eqref{chi-decay}, \eqref{psi-Linf} and Lemma~\ref{lem.est-dispersion}, we obtain 
\begin{align}\label{well-d.theta1}
\begin{split}
|\theta_{1}|&\le C\int_{0}^{\infty}(\|\psi(\cdot, t)\|_{L^{2}}^{2}+\|\chi(\cdot, t)\|_{L^{1}}\|\p_{x}\psi(\cdot, t)\|_{L^{\infty}}+\|\p_{x}^{4}\chi(\cdot, t)\|_{L^{1}})dt \\
&\le C\int_{0}^{\infty}\left((1+t)^{-\frac{3}{2}+2\e}+(1+t)^{-\frac{3}{2}+\e}+(1+t)^{-2}\right)dt\le C.
\end{split}
\end{align}
Thus, $\theta_{1}$ is well-defined. 

Now, let us prove that the leading term of $D(x, t)$ is given by $\theta_{1}\p_{x}(G_{0}(x, 1+t)\eta(x, t))$. In the following, let $0\le l \le s-3$. From the definition of $D(x, t)$ given by \eqref{D}, it follows that 
\begin{align}\label{D-U}
\begin{split}
&\p_{x}^{l}(D(x, t)-\theta_{1}\p_{x}(G_{0}(x, 1+t)\eta(x, t))) \\
&=\int_{0}^{t}\int_{\R}\p_{x}^{l+1}(G_{0}(x-y, t-\tau)\eta(x, t))\rho(y, \tau)dyd\tau \\
&\ \ \ \ -\left(\int_{0}^{\infty}\int_{\R}\rho(y, \tau)dyd\tau\right)\p_{x}^{l+1}(G_{0}(x, 1+t)\eta(x, t))\\
&=\sum_{j=0}^{l+1}\begin{pmatrix} l+1 \\ j\end{pmatrix}\p_{x}^{l+1-j}\eta(x, t)\int_{0}^{t}\int_{\R}\p_{x}^{j}G_{0}(x-y, t-\tau)\rho(y, \tau)dyd\tau\\
&\ \ \ \ -\sum_{j=0}^{l+1}\begin{pmatrix} l+1 \\ j\end{pmatrix}\p_{x}^{l+1-j}\eta(x, t)\p_{x}^{j}G_{0}(x, 1+t)\left(\int_{0}^{\infty}\int_{\R}\rho(y, \tau)dyd\tau\right) \\
&=\sum_{j=0}^{l+1}\begin{pmatrix} l+1 \\ j\end{pmatrix}\p_{x}^{l+1-j}\eta(x, t)\int_{0}^{t}\int_{\R}\left(\p_{x}^{j}G_{0}(x-y, t-\tau)-\p_{x}^{j}G_{0}(x, 1+t)\right)\rho(y, \tau)dyd\tau\\
&\ \ \ \ -\sum_{j=0}^{l+1}\begin{pmatrix} l+1 \\ j\end{pmatrix}\p_{x}^{l+1-j}\eta(x, t)\p_{x}^{j}G_{0}(x, 1+t)\left(\int_{t}^{\infty}\int_{\R}\rho(y, \tau)dyd\tau\right)\\
&=:K_{1}(x, t)+K_{2}(x, t). 
\end{split}
\end{align}

First, we shall evaluate $K_{2}(x, t)$. By using \eqref{eta-est1}, Lemma~\ref{lem.eta-decay} and Lemma~\ref{lem.heat-decay}, we get 
\begin{align}\label{K2}
\begin{split}
\|K_{2}(\cdot, t)\|_{L^{p}}&\le C\sum_{j=0}^{l+1}(1+t)^{-\frac{l+1-j}{2}}(1+t)^{-\frac{1}{2}+\frac{1}{2p}-\frac{j}{2}}\left|\left(\int_{t}^{\infty}\int_{\R}\rho(y, \tau)dyd\tau\right)\right| \\
&\le C(1+t)^{-1+\frac{1}{2p}-\frac{l}{2}}\left|\left(\int_{t}^{\infty}\int_{\R}\rho(y, \tau)dyd\tau\right)\right|=:C(1+t)^{-1+\frac{1}{2p}-\frac{l}{2}}|R(t)|.
\end{split}
\end{align}
By modifying the discussion of \eqref{theta1} and \eqref{well-d.theta1}, and applying \eqref{eta-est2}, \eqref{main2}, \eqref{second1}, \eqref{chi-decay} and Lemma~\ref{lem.est-dispersion}, we can evaluate $R(t)$. Indeed, using the integration by parts, it follows that 
\begin{align}\label{R(t)}
\begin{split}
|R(t)|&\le C\int_{t}^{\infty}(\|\psi(\cdot, t)\|_{L^{2}}^{2}+\|\chi(\cdot, t)\|_{L^{1}}\|\p_{x}\psi(\cdot, t)\|_{L^{\infty}}+\|\p_{x}^{4}\chi(\cdot, t)\|_{L^{1}})dt \\
&\le C\int_{t}^{\infty}(1+\tau)^{-\frac{3}{2}}\log(1+\tau)^{2}d\tau=2C\int_{t}^{\infty}(1+\tau)^{-\frac{3}{2}}\log(1+\tau)d\tau\\
&=2C\left[-2(1+\tau)^{-\frac{1}{2}}\log(1+\tau)\right]_{t}^{\infty}-2C\int_{t}^{\infty}(-2)(1+\tau)^{-\frac{1}{2}}(1+\tau)^{-1}d\tau \\
&=4C(1+t)^{-\frac{1}{2}}\log(1+t)+4C\int_{t}^{\infty}(1+\tau)^{-\frac{3}{2}}d\tau \\
&=4C(1+t)^{-\frac{1}{2}}\log(1+t)+8C(1+t)^{-\frac{1}{2}} \\
&\le C(1+t)^{-\frac{1}{2}}\log(1+t), \ \ t\ge e-1.
\end{split}
\end{align} 
Hence, combining \eqref{K2} and \eqref{R(t)}, we can see that 
\begin{equation}\label{K2-2}
\|K_{2}(\cdot, t)\|_{L^{p}}\le C(1+t)^{-\frac{3}{2}+\frac{1}{2p}-\frac{l}{2}}\log(1+t), \ \ t\ge e-1. 
\end{equation}

Next, we deal with $K_{1}(x, t)$. In the same way as \eqref{K2}, we obtain 
\begin{equation}\label{K1}
\|K_{1}(\cdot, t)\|_{L^{p}}\le C\sum_{j=0}^{l+1}\begin{pmatrix} l+1 \\ j\end{pmatrix}(1+t)^{-\frac{l+1-j}{2}}\|X_{j}(\cdot, t)\|_{L^{p}},
\end{equation}
where we have set $X_{j}(x, t)$ as 
\begin{equation*}
X_{j}(x, t):=\int_{0}^{t}\int_{\R}\left(\p_{x}^{j}G_{0}(x-y, t-\tau)-\p_{x}^{j}G_{0}(x, 1+t)\right)\rho(y, \tau)dyd\tau.
\end{equation*}
For the latter sake, we shall rewrite $X_{j}(x, t)$. Recalling \eqref{Gauss} and \eqref{heat}, we can see that 
\begin{align*}
G_{0}(x-y, t-\tau)&=G(x-y-\alpha(t-\tau), t-\tau),  \\
G_{0}(x, 1+t)&=G(x-\alpha(1+t), 1+t). 
\end{align*}
Therefore, by using the change of variable, we have 
\begin{align*}
X_{j}(x, t)&=\int_{0}^{t}\int_{\R}\left(\p_{x}^{j}G_{0}(x-y, t-\tau)-\p_{x}^{j}G_{0}(x, 1+t)\right)\rho(y, \tau)dyd\tau \\
&=\int_{0}^{t}\int_{\R}\left(\p_{x}^{j}G(x-y-\alpha(t-\tau), t-\tau)-\p_{x}^{j}G(x-\alpha(1+t), 1+t)\right)\rho(y, \tau)dyd\tau \\
&=\int_{0}^{t}\int_{\R}\left(\p_{x}^{j}G(x-\alpha(1+t)-y, t-\tau)-\p_{x}^{j}G(x-\alpha(1+t), 1+t)\right)\\
&\ \ \ \ \times \rho(y+\alpha(1+\tau), \tau)dyd\tau. 
\end{align*}
Now, we take small $\e>0$ and then split the integral of the right hand side of $X_{j}(x, t)$ as follows: 
\begin{align}\label{X(x,t)}
\begin{split}
X_{j}(x, t)&=\int_{\e t/2}^{t}\int_{\R}\p_{x}^{j}G(x-\alpha(1+t)-y, t-\tau)\rho(y+\alpha(1+\tau), \tau)dyd\tau\\
&\ \ \ \ -\int_{\e t/2}^{t}\int_{\R}\p_{x}^{j}G(x-\alpha(1+t), 1+t)\rho(y+\alpha(1+\tau), \tau)dyd\tau \\
&\ \ \ \ +\int_{0}^{\e t/2}\int_{|y|\ge \e \sqrt{t}}\p_{x}^{j}G(x-\alpha(1+t)-y, t-\tau)\rho(y+\alpha(1+\tau), \tau)dyd\tau \\
&\ \ \ \ -\int_{0}^{\e t/2}\int_{|y|\ge \e \sqrt{t}}\p_{x}^{j}G(x-\alpha(1+t), 1+t)\rho(y+\alpha(1+\tau), \tau)dyd\tau \\
&\ \ \ \ +\int_{0}^{\e t/2}\int_{|y|\le \e \sqrt{t}}\left(\p_{x}^{j}G(x-\alpha(1+t)-y, t-\tau)-\p_{x}^{j}G(x-\alpha(1+t), 1+t)\right)\\
&\ \ \ \ \ \ \ \ \times \rho(y+\alpha(1+\tau), \tau)dyd\tau \\
&=:Y_{1}+Y_{2}+Y_{3}+Y_{4}+Y_{5}. 
\end{split}
\end{align}
To evaluate $X_{j}(x, t)$, for the heat kernel $G(x, t)$, we recall the following well known estimate:
\begin{equation}\label{est.Gauss}
\|\p_{t}^{k}\p_{x}^{l}G(\cdot, t)\|_{L^{p}}\le Ct^{-\frac{1}{2}+\frac{1}{2p}-\frac{l}{2}-k}, \ \ t>0.
\end{equation}
Here, we note that the time derivative of $G(x, t)$ decays faster than that of $G_{0}(x, t)$. This fact plays an important role in the estimate for $Y_{5}$ below. Also, by developing the argument given in the previous paragraph, using \eqref{eta-est2}, \eqref{main2}, \eqref{second1}, \eqref{chi-decay} and Lemma~\ref{lem.est-dispersion}, we are able to see that 
\begin{equation}\label{est.rho}
\|\p_{x}^{j}\rho(\cdot, t)\|_{L^{p}}\le CE_{s, 1}(1+t)^{-2+\frac{1}{2p}-\frac{j}{2}}\log(1+t), \ \ t\ge e-1.
\end{equation}
Now, let us evaluate $Y_{1}$ to $Y_{5}$. First for $Y_{1}$, from Young's inequality, \eqref{est.Gauss} and \eqref{est.rho}, we obtain 
\begin{align}\label{Y1}
\begin{split}
\|Y_{1}(\cdot, t)\|_{L^{p}}
&= \left\|\int_{\e t/2}^{t}\int_{\R}\p_{x}^{j}G(\cdot-\alpha(1+t)-y, t-\tau)\rho(y+\alpha(1+\tau), \tau)dyd\tau\right\|_{L^{p}}\\
&= \left\|\int_{\e t/2}^{t}\int_{\R}\p_{x}^{j}G(\cdot-y, t-\tau)\rho(y+\alpha(1+\tau), \tau)dyd\tau\right\|_{L^{p}}\\
&\le C\int_{\e t/2}^{t}\|G(\cdot, t-\tau)\|_{L^{1}}\|\p_{x}^{j}\rho(\cdot+\alpha(1+\tau), \tau)\|_{L^{p}}d\tau \\
&\le C\int_{\e t/2}^{t}(1+\tau)^{-2+\frac{1}{2p}-\frac{j}{2}}\log(1+\tau)d\tau \\
&\le C\e^{-2+\frac{1}{2p}-\frac{j}{2}}t^{-1+\frac{1}{2p}-\frac{j}{2}}\log(1+t), \ \ t\ge e-1.
\end{split}
\end{align}
Next, for $Y_{2}$, we easily have 
\begin{align}\label{Y2}
\begin{split}
\|Y_{2}(\cdot, t)\|_{L^{p}}&\le C\|\p_{x}^{j}G(\cdot-\alpha(1+t), 1+t)\|_{L^{p}}\int_{\e t/2}^{t}\int_{\R}|\rho(y+\alpha(1+\tau), \tau)|dyd\tau \\
&\le C(1+t)^{-\frac{1}{2}+\frac{1}{2p}-\frac{j}{2}}\int_{\e t/2}^{t}(1+\tau)^{-\frac{3}{2}}\log(1+\tau)d\tau \\
&\le C\e^{-\frac{3}{2}}t^{-1+\frac{1}{2p}-\frac{j}{2}}\log(1+t), \ \ t\ge e-1.
\end{split}
\end{align}
Analogously, we obtain for $Y_{3}$ that 
\begin{align}\label{Y3}
\begin{split}
\|Y_{3}(\cdot, t)\|_{L^{p}}&\le \int_{0}^{\e t/2}\int_{|y|\ge \e\sqrt{t}}\|\p_{x}^{j}G(\cdot-\alpha(1+t)-y, t-\tau)\|_{L^{p}}|\rho(y+\alpha(1+\tau), \tau)|dyd\tau \\
&\le Ct^{-\frac{1}{2}+\frac{1}{2p}-\frac{j}{2}}Z(t),
\end{split}
\end{align}
where we have defined 
\[Z(t):=\int_{0}^{\e t/2}\int_{|y|\ge \e\sqrt{t}}|\rho(y+\alpha(1+\tau), \tau)|dyd\tau.\]
In the same way as $Y_{3}$, we have the following estimate for $Y_{4}$: 
\begin{align}\label{Y4}
\begin{split}
\|Y_{4}(\cdot, t)\|_{L^{p}}\le Ct^{-\frac{1}{2}+\frac{1}{2p}-\frac{j}{2}}Z(t).
\end{split}
\end{align}
In addition, applying the Lebesgue's dominated convergence theorem, we can see that 
\begin{equation}\label{Z}
\lim_{t\to \infty}Z(t)=0
\end{equation}
because $\displaystyle \int_{0}^{\infty}\int_{\R}|\rho(x, t)|dxdt<\infty$. Finally, we shall treat $Y_{5}$. If $|y|\le \e \sqrt{t}$ and $0\le \tau \le \e t/2$, by using the mean value theorem and \eqref{est.Gauss}, we have 
\begin{align*}
\begin{split}
&\|\p_{x}^{j}G(\cdot-\alpha(1+t)-y, t-\tau)-\p_{x}^{j}G(\cdot-\alpha(1+t), 1+t)\|_{L^{p}} \\
&=\|\p_{x}^{j}G(\cdot-y, t-\tau)-\p_{x}^{j}G(\cdot, 1+t)\|_{L^{p}} \\
&\le \|\p_{x}^{j}G(\cdot-y, t-\tau)-\p_{x}^{j}G(\cdot, t-\tau)\|_{L^{p}} +\|\p_{x}^{j}G(\cdot, t-\tau)-\p_{x}^{j}G(\cdot, 1+t)\|_{L^{p}} \\
&\le C(t-\tau)^{-1+\frac{1}{2p}-\frac{j}{2}}|y|+C(t-\tau)^{-\frac{3}{2}+\frac{1}{2p}-\frac{j}{2}}(1+\tau) \\
&\le C\e t^{-\frac{1}{2}+\frac{1}{2p}-\frac{j}{2}}+Ct^{-\frac{3}{2}+\frac{1}{2p}-\frac{j}{2}}. 
\end{split}
\end{align*}
Thus, combining $\displaystyle \int_{0}^{\infty}\int_{\R}|\rho(x, t)|dxdt<\infty$ and the above estimate, we obtain  
\begin{align}\label{Y5}
\begin{split}
\|Y_{5}(\cdot, t)\|_{L^{p}}
&\le C\int_{0}^{\e t/2}\int_{|y|\le \e\sqrt{t}}\|\p_{x}^{j}G(\cdot-\alpha(1+t)-y, t-\tau)-\p_{x}^{j}G_{0}(\cdot-\alpha(1+t), 1+t)\|_{L^{p}}\\
&\ \ \ \ \ \ \ \times|\rho(y+\alpha(1+\tau), \tau)|dyd\tau \\
&\le C\e t^{-\frac{1}{2}+\frac{1}{2p}-\frac{j}{2}}+Ct^{-\frac{3}{2}+\frac{1}{2p}-\frac{j}{2}}, \ \ t>0.
\end{split}
\end{align}
Eventually, summing up \eqref{D-U}, \eqref{K2-2} trough \eqref{X(x,t)} and \eqref{Y1} through \eqref{Y5}, we arrive at 
\[\limsup_{t\to \infty}t^{1-\frac{1}{2p}+\frac{l}{2}}\|\p_{x}^{l}(D(\cdot, t)-\theta_{1}\p_{x}(G_{0}(\cdot, 1+t)\eta(\cdot, t)))\|_{L^{p}}\le C\e.\]
Therefore, we finally obtain 
\[\lim_{t\to \infty}t^{1-\frac{1}{2p}+\frac{l}{2}}\|\p_{x}^{l}(D(\cdot, t)-\theta_{1}\p_{x}(G_{0}(\cdot, 1+t)\eta(\cdot, t)))\|_{L^{p}}=0\]
because $\e>0$ can be chosen arbitrarily small. By modifying the limit in the above formula, we are able to get the desired result \eqref{third-D}. This completes the proof. 
\end{proof}

By virtue of the above Lemma's, we can derive the leading term of $w(x, t)$ as follows:  
\begin{prop}
\label{prop.third1}
Let $s\ge3$. Assume that $u_{0}\in L^{1}_{1}(\R)\cap H^{s}(\R)$, $z_{0}\in L^{1}_{1}(\R)$ and $E_{s, 0}$ is sufficiently small. Then, the estimate 
\begin{equation}
\label{lead.w}
\lim_{t\to \infty}(1+t)^{1-\frac{1}{2p}+\frac{l}{2}}\|\p_{x}^{l}(w(\cdot, t)-W(\cdot, t))\|_{L^{p}}=0
\end{equation}
holds for any $p\in [2, \infty]$ and integer $0\le l\le s-3$, where $w(x, t)$ is defined by \eqref{w.eq1} and is the solution to \eqref{w.eq2}, while $W(x, t)$ is defined by \eqref{third1}. 
\end{prop}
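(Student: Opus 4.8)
The plan is to read off the leading term of $w$ directly from the representation \eqref{D} together with Lemma~\ref{lem.third-L} and Lemma~\ref{lem.third-D}, and then to identify that leading term with $W(x,t)$ via the self-similar change of variables. Recall from \eqref{D} that $w(x,t) = U[\psi_0](x,t,0) + D(x,t)$, and set $\theta_0 := \int_\R z_0(x)\,dx$ and $\theta_1 := \int_0^\infty\int_\R \rho(x,t)\,dx\,dt$, so that $\theta = \theta_0 + \theta_1$ by \eqref{initial} (the finiteness of $\theta_1$ is already part of Lemma~\ref{lem.third-D}, and $z_0 \in L^1_1$ is exactly what Lemma~\ref{lem.third-L} requires). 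By Lemma~\ref{lem.third-L}, the linear part $U[\psi_0](\cdot,t,0)$ is approximated by $\theta_0\,\p_x(G_0(\cdot,1+t)\eta(\cdot,t))$ with error $O((1+t)^{-3/2+\frac{1}{2p}-\frac{l}{2}})$, which is $o((1+t)^{-1+\frac{1}{2p}-\frac{l}{2}})$ after multiplying by $(1+t)^{1-\frac{1}{2p}+\frac{l}{2}}$; and by Lemma~\ref{lem.third-D}, the Duhamel part $D(\cdot,t)$ is approximated by $\theta_1\,\p_x(G_0(\cdot,1+t)\eta(\cdot,t))$ with error $o((1+t)^{-1+\frac{1}{2p}-\frac{l}{2}})$. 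Adding these two estimates, I would obtain
\[
\lim_{t\to\infty}(1+t)^{1-\frac{1}{2p}+\frac{l}{2}}\bigl\|\p_x^l\bigl(w(\cdot,t)-\theta\,\p_x(G_0(\cdot,1+t)\eta(\cdot,t))\bigr)\bigr\|_{L^p}=0 .
\]

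It then remains to verify the identity $\theta\,\p_x(G_0(\cdot,1+t)\eta(\cdot,t)) = W(x,t)$. Writing $\zeta := (x-\alpha(1+t))/\sqrt{1+t}$ with $\alpha=\frac{2B}{b}$, the definitions \eqref{heat} and \eqref{eta} give
\[
G_0(x,1+t)\,\eta(x,t) = \frac{1}{\sqrt{1+t}}\cdot\frac{1}{\sqrt{4\pi\mu}}\,e^{-\zeta^2/4\mu}\,\eta_*(\zeta),
\]
and since $\p_x = (1+t)^{-1/2}\p_\zeta$, applying $\p_x$ once and recalling the definition \eqref{second2} of $V_*$ yields
\[
\p_x\bigl(G_0(x,1+t)\,\eta(x,t)\bigr) = (1+t)^{-1}V_*\!\left(\frac{x-\alpha(1+t)}{\sqrt{1+t}}\right),
\]
so that multiplying by $\theta$ reproduces exactly $W(x,t)$ as defined by \eqref{third1}. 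Combining this with the limit above completes the proof.

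There is no real obstacle in this proposition: its substance is entirely carried by Lemma~\ref{lem.third-L} and Lemma~\ref{lem.third-D}, and what remains is mechanical. The only point that deserves a moment's care is to track the chain rule through the self-similar rescaling so that the single spatial derivative in $\p_x(G_0\eta)$ lands on the product $\eta_*(\zeta)e^{-\zeta^2/4\mu}$ in the way that produces $V_*$, and to note that the faster decay of the $U[\psi_0]$ correction makes it harmless relative to the target rate.
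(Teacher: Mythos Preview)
Your proposal is correct and follows essentially the same approach as the paper: decompose $w = U[\psi_0] + D$, apply Lemma~\ref{lem.third-L} and Lemma~\ref{lem.third-D} to replace each piece by $\theta_0\,\p_x(G_0(\cdot,1+t)\eta(\cdot,t))$ and $\theta_1\,\p_x(G_0(\cdot,1+t)\eta(\cdot,t))$ respectively, and then verify the identity $\theta\,\p_x(G_0(x,1+t)\eta(x,t)) = W(x,t)$ via the self-similar rescaling. The paper's proof is in fact slightly more terse than yours, simply asserting the identity \eqref{ww} and then writing out the two-term decomposition of $w-W$ before invoking the two lemmas.
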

\begin{proof}
From the definition of $W(x, t)$, we can easily obtain 
\begin{equation}\label{ww}
W(x, t)=\theta V_{*}\biggl(\frac{x-\alpha(1+t)}{\sqrt{1+t}}\biggl)(1+t)^{-1}=\theta \p_{x}(G_{0}(x, 1+t)\eta(x, t)). 
\end{equation} 
Also, from \eqref{initial}, we get $\theta=\theta_{0}+\theta_{1}$. Therefore, by using \eqref{D} and \eqref{ww}, we have 
\begin{align*}
w(x, t)-W(x, t)&=U[\psi_{0}](x, t, 0)+D(x, t)-\theta\p_{x}(G_{0}(x, 1+t)\eta(x, t))\\
&=U[\psi_{0}](x, t, 0)-\theta_{0}\p_{x}(G_{0}(x, 1+t)\eta(x, t))+D(x, t)-\theta_{1}\p_{x}(G_{0}(x, 1+t)\eta(x, t)).
\end{align*}
Thus, summing up Lemma~\ref{lem.third-L} and Lemma~\ref{lem.third-D}, we arrive at the desired result \eqref{lead.w}.
\end{proof}

Finally, to complete the proof of Theorem~\ref{thm.main3}, we shall prove that the leading term of the perturbation $v-V$ is given by $\Psi(x, t)$ defined by \eqref{third2} as follows: 
\begin{prop}
\label{prop.third2}
Let $l$ be a non-negative integer. Then, for $|M|\le1$ and $p\in [1, \infty]$, we have
\begin{equation}
\label{lead.v-V}
\|\p_{x}^{l}(v(\cdot, t)-V(\cdot, t)-\Psi(\cdot, t))\|_{L^{p}}\le C|M|(1+t)^{-\frac{3}{2}+\frac{1}{2p}-\frac{l}{2}}, \ \ t\ge1,
\end{equation}
where $v(x, t)$ is the solution to \eqref{v}, while $V(x, t)$ and $\Psi(x, t)$ are defined by \eqref{second1} and \eqref{third2}, respectively. 
\end{prop}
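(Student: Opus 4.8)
The plan is to run the same representation-formula machinery as in the proof of Lemma~\ref{lem.third-D}. Equation~\eqref{v} is exactly the auxiliary problem~\eqref{aux} with $z_{0}=0$ and $\p_{x}\lambda=-\tfrac{2B}{b^{3}}\chi_{xxx}$, i.e. $\lambda=-\tfrac{2B}{b^{3}}\chi_{xx}$, so Lemma~\ref{lem.formula} gives
\[
v(x,t)=-\frac{2B}{b^{3}}\int_{0}^{t}\int_{\R}\p_{x}\bigl(G_{0}(x-y,t-\tau)\eta(x,t)\bigr)\,\eta(y,\tau)^{-1}\chi_{yy}(y,\tau)\,dy\,d\tau .
\]
By the self-similar form of $\chi$ and $\eta$ one has $\eta(y,\tau)^{-1}\chi_{yy}(y,\tau)=(1+\tau)^{-3/2}(\eta_{*}^{-1}\chi_{*}'')(\zeta_{\tau})$ with $\zeta_{\tau}=(y-\alpha(1+\tau))/\sqrt{1+\tau}$, and the crucial point is the splitting
\[
\frac{2B}{b^{3}}\,\eta_{*}^{-1}\chi_{*}''=F_{*}+\kappa d\,G(\cdot,1),\qquad G(x,1)=\frac{1}{\sqrt{4\pi\mu}}e^{-x^{2}/4\mu},
\]
which follows from the identity $\int_{\R}\eta_{*}^{-1}\chi_{*}''\,dx=\tfrac{\beta^{2}}{8\mu^{2}}d$ (two integrations by parts, using $(\eta_{*}^{-1})'=-\tfrac{\beta}{2\mu}\chi_{*}\eta_{*}^{-1}$) together with $\tfrac{2B}{b^{3}}\cdot\tfrac{\beta^{2}}{8\mu^{2}}=\kappa$. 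In particular $F_{*}$ has zero mean, which is what prevents a further logarithm from entering $\Psi$.

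First I would insert the ``mean part'' $\kappa d\,G(\cdot,1)$ of the density into the representation: freezing the heat kernel at time $1+t$ (the discrepancy being controlled by the mean value theorem and the extra decay of $\p_{t}G$ recorded in~\eqref{est.Gauss}, exactly as for the term $Y_{5}$ in Lemma~\ref{lem.third-D}) and using $\int_{1}^{t}(1+\tau)^{-1}d\tau=\log(1+t)+O(1)$, one obtains precisely $-\kappa d\,\log(1+t)\,\p_{x}\bigl(\eta(x,t)G_{0}(x,1+t)\bigr)=V(x,t)$, up to an error of order $(1+t)^{-3/2+\frac{1}{2p}-\frac{l}{2}}$. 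Next I would insert the ``mean-zero part'' $F_{*}$: rescaling the time variable by $\tau\mapsto\tau/(1+t)$, using $G_{0}(\cdot,t-\tau)=G(\cdot-\alpha(t-\tau),t-\tau)$ and exploiting the zero-mean gain of~\eqref{heat-decay2}, this contribution converges, at the exact rate $(1+t)^{-1}$, to $(1+t)^{-1/2}\p_{x}\bigl(\eta(x,t)\int_{0}^{1}(G(1-\tau)*F(\tau))(\zeta_{t})\,d\tau\bigr)=\Psi(x,t)$, where $\zeta_{t}=(x-\alpha(1+t))/\sqrt{1+t}$. All remaining error terms — the $\p_{x}\eta$ contributions (each gaining an extra $(1+t)^{-1/2}$ by Lemma~\ref{lem.eta-decay}), the replacement of $1+\tau$ by $\tau$ and of the centering $\alpha(1+\tau)$ inside the self-similar source (harmless away from $\tau=0$, absorbed near $\tau=0$ because $F_{*}$ has zero mean), and the near-diagonal piece $\tau\sim t$ — are bounded by $(1+t)^{-3/2+\frac{1}{2p}-\frac{l}{2}}$ by means of Lemma~\ref{lem.chi-decay}, Lemma~\ref{lem.heat-decay}, Lemma~\ref{lem.eta-decay} and~\eqref{est.Gauss}, splitting $\int_{0}^{t}=\int_{0}^{\e t}+\int_{\e t}^{t}$ as in Lemma~\ref{lem.third-D}. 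Alternatively, after a translation of the space variable removing the convection term, \eqref{v} with $\alpha=\tfrac{2B}{b}$ coincides with the auxiliary linear equation appearing in the asymptotic analysis of the KdV--Burgers equation~\eqref{KdVB} with $\gamma=\tfrac{2B}{b^{3}}$, and since $V$ and $\Psi$ are defined here by exactly the same formulas as the first two correctors there, \eqref{lead.v-V} may also be deduced from the corresponding estimate in~\cite{F19-1}, just as Proposition~\ref{prop.second1} was obtained from Proposition~4.3 in~\cite{F19-1}.

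The main obstacle will be the simultaneous extraction of the two profiles $V$ and $\Psi$ from a single time integral: because the $O(1)$ remainder of $\int_{1}^{t}(1+\tau)^{-1}d\tau=\log(1+t)-\log 2$ is itself of $\Psi$-order $(1+t)^{-1+\frac{1}{2p}-\frac{l}{2}}$, one must check that it is exactly compensated by the counterterm $-\kappa d\,G(\cdot,1)$ built into $F_{*}$, so that no uncontrolled term of that order survives outside $V+\Psi$; keeping, at the same time, the many kernel-freezing and $\p_{x}\eta$ error terms below the threshold $(1+t)^{-3/2+\frac{1}{2p}-\frac{l}{2}}$ uniformly for $p\in[1,\infty]$ and all $l$ is where the bookkeeping is most delicate.
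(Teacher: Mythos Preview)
Your approach is correct in principle but takes a considerably more laborious route than the paper. You work with the representation of $v$ alone and then try to \emph{extract} $V$ and $\Psi$ separately by splitting the density $\tfrac{2B}{b^{3}}\eta_{*}^{-1}\chi_{*}''=F_{*}+\kappa d\,G(\cdot,1)$, freezing the heat kernel to produce $V$, and rescaling time to produce $\Psi$. This forces you into the kernel-freezing and $\int_{0}^{\e t}+\int_{\e t}^{t}$ machinery of Lemma~\ref{lem.third-D}, and you rightly worry that the $O(1)$ remainder $-\log 2$ from $\int_{1}^{t}(1+\tau)^{-1}\,d\tau$ must be absorbed \emph{exactly} by the counterterm inside $F_{*}$.

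The paper sidesteps all of this. It first writes down the equation satisfied by $V$ (equation~\eqref{V.eq}), subtracts it from~\eqref{v}, and applies Lemma~\ref{lem.formula} to obtain an explicit formula for $v-V$ in which the source is already $F_{*}((y-\alpha(1+\tau))/\sqrt{1+\tau})(1+\tau)^{-3/2}$. There is no extraction of $V$ and no $\log 2$ issue; the splitting you identified is built into the very definition of $F_{*}$. Then, by a chain of elementary changes of variables (passing to the moving frame, rescaling $y$, rescaling $\tau$ by $1+t$, and undoing the $y$-scaling), the Duhamel integral is rewritten \emph{exactly} as
\[
v(x,t)-V(x,t)=(1+t)^{-\frac{1}{2}}\,\p_{x}\!\left(\eta(x,t)\int_{\frac{1}{1+t}}^{1}(G(1-\tau)*F(\tau))\!\left(\tfrac{x-\alpha(1+t)}{\sqrt{1+t}}\right)d\tau\right),
\]
which differs from $\Psi(x,t)$ only in the lower limit of the $\tau$-integral ($\tfrac{1}{1+t}$ instead of $0$). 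The whole estimate~\eqref{lead.v-V} then reduces to bounding the single tail $\int_{0}^{1/(1+t)}(\cdots)\,d\tau$, which is immediate from the zero-mean of $F_{*}$ and gives an extra $(1+t)^{-1/2}$. No $\e$-splitting, no kernel freezing, no $Y_{1}$--$Y_{5}$ decomposition is needed. Your alternative suggestion of reducing to \cite{F19-1} by translation is also valid, but the paper gives this direct self-contained argument instead.
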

\begin{proof}
From the definition of $V(x, t)$ by \eqref{second1}, it follows that 
\[V(x, t)= -\kappa dV_{*}\biggl(\frac{x-\alpha(1+t)}{\sqrt{1+t}}\biggl)(1+t)^{-1}\log(1+t)=-\kappa d\p_{x}(G_{0}(x, 1+t)\eta(x, t))\log(1+t). \]
Therefore, $V(x, t)$ satisfies the following equation: 
\begin{align}
\begin{split}\label{V.eq}
V_{t}+\alpha V_{x}+(\beta \chi V)_{x}-\mu V_{xx}+\kappa d(1+t)^{-2}V_{*}\biggl(\frac{x-\alpha(1+t)}{\sqrt{1+t}}\biggl)&=0 , \ \ x\in \R, \ \ t>0, \\
V(x, 0)&=0, \ \ x\in \R.
\end{split}
\end{align}
Since $v(x, 0)=V(x, 0)=0$, combining \eqref{v} and \eqref{V.eq} and applying Lemma~\ref{lem.formula}, we have
\begin{align}\label{6.8}
\begin{split}
v(x, t)-V(x, t)&=\int_{0}^{t}\int_{\R}\p_{x}(G_{0}(x-y, t-\tau)\eta(x, t))(\eta(y, \tau))^{-1} \\
&\ \ \ \ \times \left(\frac{2B}{b^{3}}\chi_{yy}(y, \tau)-\frac{\kappa d}{\sqrt{4\pi \mu}}(1+\tau)^{-\frac{3}{2}}e^{-\frac{(y-\alpha(1+\tau))^{2}}{4\mu(1+\tau)}}\eta(y, \tau)\right)dyd\tau. \\
&=\int_{0}^{t}\int_{\R}\p_{x}(G_{0}(x-y, t-\tau)\eta(x, t))F_{*}\left(\frac{y-\alpha(1+\tau)}{\sqrt{1+\tau}}\right)(1+\tau)^{-\frac{3}{2}}dyd\tau\\
&=\p_{x}\left(\eta(x, t)\int_{0}^{t}(1+\tau)^{-\frac{3}{2}}\int_{\R}G_{0}(x-y, t-\tau)F_{*}\left(\frac{y-\alpha(1+\tau)}{\sqrt{1+\tau}}\right)dyd\tau\right) \\
&=:\p_{x}(\eta(x, t)I(x, t)), 
\end{split}
\end{align}
where we have used \eqref{FF}. In the following, we transform $I(x, t)$ in the right hand side of this equation to lead that the main part of $v(x, t)-V(x, t)$ is given by $\Psi(x, t)$.

First, we shall prove the well-definedness of $\Psi(x, t)$ in $C^{0}([0, \infty); W^{m, p})$ for all $m\in \mathbb{N}\cup \{0\}$ and $p\in [1, \infty]$.
To do that, we prove $\Psi_{*} \in W^{m, p}(\R)$. 
It is sufficient to show 
\begin{equation}\label{6.9}
\int_{0}^{1}(G(1-\tau)*F(\tau))(x)d\tau \in W^{m, p}(\R),
\end{equation}
where $G(x, t)$ is the heat kernel defined by \eqref{Gauss}. To prove \eqref{6.9}, we split the $\tau$-integral. First, from Young's inequality, \eqref{est.Gauss}, \eqref{chi-decay} and \eqref{eta-est2}, we obtain
\begin{align}\label{6.10}
\begin{split}
\left\|\p_{x}^{m}\left(\int_{1/2}^{1}(G(1-\tau)*F(\tau))(\cdot)d\tau \right)\right\|_{L^{p}}
&\le  \int_{1/2}^{1}\|G(\cdot, 1-\tau)\|_{L^{1}}\|\p_{x}^{m}F(\cdot, \tau)\|_{L^{p}}d\tau \\
&=\|\p_{x}^{m}F_{*}\|_{L^{p}}\int_{1/2}^{1}\tau^{-\frac{3}{2}-\frac{m}{2}+\frac{1}{2p}}d\tau \le C. 
\end{split}
\end{align}
Next, we shall evaluate the latter part of the integral. Since \eqref{second2} and \eqref{second3}, by using the integration by parts twice, we can easily show 
\[\int_{\R}F_{*}(y)dy=0.\] 
Therefore, by the change of variable, Young's inequality, \eqref{est.Gauss}, \eqref{chi-decay} and \eqref{eta-est2} again, we have
\begin{align}\label{6.11}
\begin{split}
&\left\|\p_{x}^{m}\left(\int_{0}^{1/2}(G(1-\tau)*F(\tau))(\cdot)d\tau \right)\right\|_{L^{p}}\\
&=\left\|\p_{x}^{m}\int_{0}^{1/2}\int_{\R}G(x-y, 1-\tau)\tau^{-\frac{3}{2}}F_{*}\left(\frac{y}{\sqrt{\tau}}\right)dyd\tau\right\|_{L^{p}_{x}}\\
&=\left\|\int_{0}^{1/2}\int_{\R}\left(\int_{0}^{1}\p_{x}^{m+1}G(x-\theta y, 1-\tau)d\theta\right)y\tau^{-\frac{3}{2}}F_{*}\left(\frac{y}{\sqrt{\tau}}\right)dyd\tau\right\|_{L^{p}_{x}}\\
&=\int_{0}^{1/2}\tau^{-1}\int_{0}^{1}\left\|\int_{\R}\p_{x}^{m+1}\left(\frac{1}{\theta}G\left(\frac{x}{\theta}-y, \frac{1-\tau}{\theta^{2}}\right)\right)\frac{y}{\sqrt{\tau}}F_{*}\left(\frac{y}{\sqrt{\tau}}\right)dy\right\|_{L^{p}_{x}}d\theta d\tau\\
&\le \int_{0}^{1/2}\tau^{-1}\left(\int_{0}^{1}\theta^{-1}\theta^{-(m+1)}\theta^{\frac{1}{p}}\theta^{1-\frac{1}{p}+(m+1)}d\theta\right)(1-\tau)^{-\frac{1}{2}+\frac{1}{2p}-\frac{m+1}{2}}\tau^{\frac{1}{2}}\|yF_{*}\|_{L^{1}}d\tau \\
&\le C\|yF_{*}\|_{L^{1}}\left(\int_{0}^{1/2}\tau^{-\frac{1}{2}}(1-\tau)^{-\frac{1}{2}+\frac{1}{2p}-\frac{m+1}{2}}d\tau\right)\le C. 
\end{split}
\end{align}
Summing up \eqref{6.10} and \eqref{6.11}, we get \eqref{6.9}. Therefore, $\Psi_{*} \in W^{m, p}(\R)$ and thus $\Psi(x, t)$ is well-defined in $C^{0}([0, \infty); W^{m, p})$ for all $m\in
\mathbb{N}\cup \{0\}$ and $p\in [1, \infty]$. 

In what follows, let us transform $I(x, t)$ defined by \eqref{6.8}. To simplify the calculation, we set $x_{0}:=x-\alpha(1+t)$ and then note that 
\[G_{0}(x-\alpha \tau, (1+t)-\tau)=G(x_{0}, (1+t)-\tau).\] 
In the same way to get \eqref{6.11}, by using the change of variable several times, we have
\begin{align*}
\begin{split}
I(x, t)&=\int_{1}^{1+t}\tau^{-\frac{3}{2}}\int_{\R}G_{0}(x-y, (1+t)-\tau)F_{*}\left(\frac{y-\alpha \tau}{\sqrt{\tau}}\right)dyd\tau \\
&=\int_{1}^{1+t}\tau^{-\frac{3}{2}}\int_{\R}\tau^{\frac{1}{2}}G_{0}(x-\tau^{\frac{1}{2}}y-\alpha \tau, (1+t)-\tau)F_{*}\left(y\right)dyd\tau \\
&=\int_{1}^{1+t}\tau^{-\frac{3}{2}}\int_{\R}\tau^{\frac{1}{2}}G(x_{0}-\tau^{\frac{1}{2}}y, (1+t)-\tau)F_{*}\left(y\right)dyd\tau \\
&=\int_{1}^{1+t}\tau^{-\frac{3}{2}}\int_{\R}G\left(\tau^{-\frac{1}{2}}x_{0}-y, \frac{(1+t)-\tau}{\tau}\right)F_{*}\left(y\right)dyd\tau \\
&=(1+t)^{-\frac{1}{2}}\int_{\frac{1}{1+t}}^{1}\tau^{-\frac{3}{2}}\int_{\R}G\left((1+t)^{-\frac{1}{2}}\tau^{-\frac{1}{2}}x_{0}-y, \frac{1-\tau}{\tau}\right)F_{*}\left(y\right)dyd\tau \\
&=(1+t)^{-\frac{1}{2}}\int_{\frac{1}{1+t}}^{1}\tau^{-\frac{3}{2}}\int_{\R}\tau^{-\frac{1}{2}}G\left((1+t)^{-\frac{1}{2}}\tau^{-\frac{1}{2}}x_{0}-\tau^{-\frac{1}{2}}y, \frac{1-\tau}{\tau}\right)F_{*}\left(\frac{y}{\sqrt{\tau}}\right)dyd\tau \\
&=(1+t)^{-\frac{1}{2}}\int_{\frac{1}{1+t}}^{1}\tau^{-\frac{3}{2}}\int_{\R}G((1+t)^{-\frac{1}{2}}x_{0}-y, 1-\tau)F_{*}\left(\frac{y}{\sqrt{\tau}}\right)dyd\tau \\
&=(1+t)^{-\frac{1}{2}}\int_{\frac{1}{1+t}}^{1}(G(1-\tau)*F(\tau))((1+t)^{-\frac{1}{2}}x_{0})d\tau.
\end{split}
\end{align*}
Therefore, from \eqref{6.8} and $x_{0}=x-\alpha(1+t)$, it follows that 
\begin{align}\label{6.12}
\begin{split}
v(x, t)-V(x, t)=(1+t)^{-\frac{1}{2}}\p_{x}\left(\eta(x, t)\int_{\frac{1}{1+t}}^{1}(G(1-\tau)*F(\tau))\left(\frac{x-\alpha(1+t)}{\sqrt{1+t}}\right)d\tau\right).
\end{split}
\end{align}
On the other hand, from the definition of $\Psi(x, t)$, we can rewrite it as follows:
\begin{align}\label{6.13}
\begin{split}
\Psi(x, t)&=\Psi_{*}\left(\frac{x-\alpha(1+t)}{\sqrt{1+t}}\right)(1+t)^{-1} \\
&=(1+t)^{-\frac{1}{2}}\p_{x}\left(\eta(x, t)\int_{0}^{1}(G(1-\tau)*F(\tau))\left(\frac{x-\alpha(1+t)}{\sqrt{1+t}}\right)d\tau\right).
\end{split}
\end{align}
Therefore, combining \eqref{6.12} and \eqref{6.13}, we obtain
\begin{align}\label{6.14}
\begin{split}
&\|\p_{x}^{l}(v(\cdot, t)-V(\cdot, t)-\Psi(\cdot, t))\|_{L^{p}}\\
&=(1+t)^{-\frac{1}{2}}\left\|\p_{x}^{l+1}\left(\eta(\cdot, t)\int_{0}^{\frac{1}{1+t}}(G(1-\tau)*F(\tau))\left(\frac{\cdot-\alpha(1+t)}{\sqrt{1+t}}\right)d\tau\right)\right\|_{L^{p}} \\
&=(1+t)^{-1+\frac{1}{2p}-\frac{l}{2}}\left\|\p_{x}^{l+1}\left(\eta_{*}(\cdot)\int_{0}^{\frac{1}{1+t}}(G(1-\tau)*F(\tau))\left(\cdot\right)d\tau\right)\right\|_{L^{p}}. 
\end{split}
\end{align}
Finally, for all $m\in \mathbb{N}\cup \{0\}$, in the same way to get \eqref{6.11}, we can easily show
\begin{align}\label{6.15}
\begin{split}
\left\|\p_{x}^{m}\left(\int_{0}^{\frac{1}{1+t}}(G(1-\tau)*F(\tau))(\cdot)d\tau \right)\right\|_{L^{p}}
&\le C\|yF_{*}\|_{L^{1}}\left(\int_{0}^{\frac{1}{1+t}}\tau^{-\frac{1}{2}}(1-\tau)^{-\frac{1}{2}+\frac{1}{2p}-\frac{m+1}{2}}d\tau\right) \\
&\le C\int_{0}^{\frac{1}{1+t}}\tau^{-\frac{1}{2}}d\tau \le C(1+t)^{-\frac{1}{2}}. 
\end{split}
\end{align}
Eventually, we arrive at the desired estimate \eqref{lead.v-V} from \eqref{6.14} and \eqref{6.15}. 
\end{proof}

\medskip
\begin{proof}[\rm{\bf{End of the Proof of Theorem~\ref{thm.main3}}}]
Since $Q(x, t)=W(x, t)+\Psi(x, t)$, combining Proposition~\ref{prop.third1} and Proposition~\ref{prop.third2}, we immediately obtain \eqref{main3}. This completes the proof. 
\end{proof}

\section*{Acknowledgments}

The authors would like to express their sincere gratitude to Professor Hideo Kubo for his feedback and valuable advice. They also thank Professor Hideo Takaoka for his persistent support. 

\medskip
This study is partially supported by Grant-in-Aid for JSPS Research Fellow No.18J12340. 



\medskip
\par\noindent
\begin{flushleft}Ikki Fukuda\\
Division of Mathematics and Physics, \\
Faculty of Engineering, \\
Shinshu University\\
4-17-1 Wakasato, Nagano 380-8553, JAPAN\\
E-mail: i\_fukuda@shinshu-u.ac.jp
\vskip10pt
Kenta Itasaka\\
E-mail: kenta.itasaka@gmail.com
\end{flushleft}

\end{document}